\documentclass[reqno]{amsart} 

\usepackage{graphicx, amsmath, amssymb, amsfonts, amsthm, stmaryrd, amscd,hyperref, bbm}
\usepackage[usenames, dvipsnames]{xcolor}

\usepackage{enumerate}
\usepackage[normalem]{ulem}

\usepackage{ifpdf}

\usepackage{xparse}

\usepackage[all]{xy}
\usepackage{enumerate}

\makeatletter
\newcommand*{\transpose}{%
  {\mathpalette\@transpose{}}%
}
\newcommand*{\@transpose}[2]{%
  \raisebox{\depth}{$\m@th#1\intercal$}%
}
\makeatother

\makeatletter
\newcommand*{\da@rightarrow}{\mathchar"0\hexnumber@\symAMSa 4B }
\newcommand*{\da@leftarrow}{\mathchar"0\hexnumber@\symAMSa 4C }
\newcommand*{\xdashrightarrow}[2][]{%
  \mathrel{%
    \mathpalette{\da@xarrow{#1}{#2}{}\da@rightarrow{\,}{}}{}%
  }%
}
\newcommand{\xdashleftarrow}[2][]{%
  \mathrel{%
    \mathpalette{\da@xarrow{#1}{#2}\da@leftarrow{}{}{\,}}{}%
  }%
}
\newcommand*{\da@xarrow}[7]{%
  \sbox0{$\ifx#7\scriptstyle\scriptscriptstyle\else\scriptstyle\fi#5#1#6\m@th$}%
  \sbox2{$\ifx#7\scriptstyle\scriptscriptstyle\else\scriptstyle\fi#5#2#6\m@th$}%
  \sbox4{$#7\dabar@\m@th$}%
  \dimen@=\wd0 %
  \ifdim\wd2 >\dimen@
    \dimen@=\wd2 %
  \fi
  \count@=2 %
  \def\da@bars{\dabar@\dabar@}%
  \@whiledim\count@\wd4<\dimen@\do{%
    \advance\count@\@ne
    \expandafter\def\expandafter\da@bars\expandafter{%
      \da@bars
      \dabar@ 
    }%
  }%
  \mathrel{#3}%
  \mathrel{%
    \mathop{\da@bars}\limits
    \ifx\\#1\\%
    \else
      _{\copy0}%
    \fi
    \ifx\\#2\\%
    \else
      ^{\copy2}%
    \fi
  }%
  \mathrel{#4}%
}
\makeatother

\usepackage{mathtools}

\theoremstyle{plain} \newtheorem{theorem} {Theorem}  \newtheorem{corollary} [theorem] {Corollary} \newtheorem{proposition} [theorem] {Proposition} 

\theoremstyle{definition} \newtheorem{definition} [theorem] {Definition}

            \newtheorem{remark} [theorem] {Remark}

\newtheoremstyle{itplain} 
{6pt}                    
{5pt\topsep}                    
{\itshape}                   
{}                           
{\itshape}                   
{.}                          
{5pt plus 1pt minus 1pt}                       
{}  

\theoremstyle{itplain} 
\newtheorem{lemma}[theorem]{Lemma}

\newtheorem*{lemma*}{Lemma}
\newtheorem*{proposition*}{Proposition}
\newtheorem*{definition*}{Definition}
\newtheorem*{example*}{Example}

\newtheorem*{results*}{Results}

\usepackage[displaymath,textmath,sections,graphics]{preview}
\PreviewEnvironment{align*}
\PreviewEnvironment{multline*}
\PreviewEnvironment{tabular}
\PreviewEnvironment{verbatim}
\PreviewEnvironment{lstlisting}
\PreviewEnvironment*{frame}
\PreviewEnvironment*{alert}
\PreviewEnvironment*{emph}
\PreviewEnvironment*{textbf}
\PreviewEnvironment*{pn}

\begin{document}


\email{peng-yj21@mails.tsinghua.edu.cn}

\title{The Weyl bound for Rankin-Selberg $L$-functions with Joint Ramification}
\author{Yunjian Peng}

\begin{abstract}
    In this paper, we establish the Weyl bound for the Rankin-Selberg $L$-function in a certain joint ramification setting. To achieve this result, we employ the refined Petersson trace formula and develop a special Voronoï summation formula. Additionally, we obtain the sharp bound for the integral of products of Whittaker functions via the $p$-adic stationary phase method.
\end{abstract}

\maketitle


\tableofcontents
\section{Introduction} 

Let $\Pi$ be an automorphic representation of a reductive group $G$ and $L(\Pi,s)$ be the $L$-function associated to $\Pi$. The generalized Lindel\"{o}f conjecture, which is a corollary of the Riemann Hypothesis for $L(\Pi,s)$,  says that for any positive $\epsilon$ we have 
\[
    L(\Pi,1/2) \ll C(\Pi)^\epsilon
\]
where $C(\Pi)$  is the analytic conductor of $\Pi$ which is a product over all places of the local analytic
conductors $C_\nu (\Pi)$. The Phragm\'en--Lindel\"of principle implies the bound 
\begin{equation}\label{convexity}
    L(\Pi,1/2) \ll C(\Pi)^{1/4+\epsilon}
\end{equation}
which is commonly referred to as the convexity bound.
The subconvexity problem, in its most general form, is the problem of improving upon (\ref{convexity}) in the exponent.

While there are very few results on the subconvexity problem for general $L$-functions, there is a huge amount of literature dealing with special cases and special families. For example, the subconvexity problem for automorphic $L$-functions of $\mathrm{GL}_2$ over number fields has been solved completely, with non-specific exponent, in the groundbreaking work \cite{MichelVenkatesh2010Subconvexity}. They also study $\mathrm {GL}_2\times \mathrm {GL}_2$ case and obtain the subconvex bound when one representation is fixed
\[
    L(1/2,\pi_1\times \pi_2) \ll_{\pi_2} C(\pi_1\times \pi_2)^{1/4-\delta}.
\]

In the level aspect, let $\pi_1$ and $\pi_2$ be cuspidal automorphic representations
 both with trivial central character whose archimedean components are bounded. Hu, Michel and Nelson prove in \cite{HMN23} that
\[
    L(1/2, \pi_1\times \pi_2)\ll C(\pi_1\times \pi_2)^{1/4-\delta }
\]
 when $\min_{i=1,2} \{ C(\pi_i\times \tilde \pi_i) \}\le C(\pi_1\times \pi_2 )^{1-\gamma }$ for some fixed $\gamma >0$. 

On the other hand,  there are some works  that establish strong subconvex bounds for special automorphic $L$-functions.For instance, let $f$, $g$ be holomorphic cusp newforms of level $M$, $N$ respectively. Holowinsky and Templier prove in \cite{Holowinsky2014} that 
\[
L(1/2,f\times g) \ll C(f\times g)^{1/6+\epsilon}
\]
for $M$, $N$ coprime, square free and $M = N^{1/2+o(1)}$ with assumption that $L(1/2, f\times g)\ge 0$. The exponent $1/6$ is referred to as the Weyl bound due to Weyl who first obtained this bound for $\zeta$ function in \cite{Wey21}. Recently Yueke Hu \cite {Hu23} have obtained the Weyl bound using a refined Petersson trace formula. More precisely, fix a prime number $p\ne 2$. Let $M=p^\mathfrak c$ with $\mathfrak c\ge 3$, $N$ square-free which is coprime to $M$. Assume that $L(f\times g,1/2)\ge 0$ for all $f\in \mathcal F_\theta[l]$ (Definition \ref{small}) and all $l$. Then if $\sqrt N\le M \le N^2$, it has been shown that
\[
   L(1/2,f\times g) \ll_{\kappa,\iota, \epsilon,p} (MN)^{1/3+\epsilon}.
\]

In this paper, we will prove the following main theorem. 

\begin{theorem}\label{Thm1}
Let $M=p^{\mathfrak c_1}$ with $\mathfrak c_1\ge 3$ and  Let $g$ be a holomorphic cuspidal newform with level $N= p^{\mathfrak c_2}$, fixed weight $\iota \ge 4$. Then 
\begin{equation}\label{firstmoments}    
\sum_{f\in \mathcal F_\theta [l]} \frac  {L(1/2, f\times g)} {\Vert f \Vert ^2} \ll_{p,\varepsilon} N^{\varepsilon} \left (p^{\mathfrak c_1/2+l} + \frac {p ^{\mathfrak c_2/2 }} {p^{\mathfrak c_1/4+l/2}}\right ).
\end{equation}
Furthermore  suppose $L(f\times g,1/2) \ge 0$ for all $f\in \mathcal F_\theta [l]$. Suppose $\mathfrak c_1 = \delta \mathfrak c_2$ for $0<\delta<1$. By picking $l= l_0 \in \{ 0, 1\}$, we get the subconvex bound 
\begin{equation*}
L(1/2,f\times g) \ll N^{\max \{ \frac \delta  2, \frac {1- \delta }  2  \}+\epsilon}.
\end{equation*}
By picking  $l$ to be the closest integer to $3\mathfrak c_2/2 -\mathfrak c_1/2$
  while $1\le l <i_0$, we get that
\begin{equation*}
 L(1/2, f\times g) \ll_{p,\varepsilon} N^{\max \left \{\frac {1-\delta} 2, \frac 1 3, \frac \delta 2 \right \}+ \varepsilon}.
\end{equation*}
  In particular we obtain a hybrid subconvexity bound for $\delta$  in any compact subset of $(0,1)$, which is further more a Weyl bound in the range $1/3 \le  \delta  \le  2/3$.
\end{theorem}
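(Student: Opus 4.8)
The argument has two parts: an analytic one establishing the first-moment bound \eqref{firstmoments}, and a soft one extracting from it the pointwise bounds on $L(1/2,f\times g)$ under the positivity hypothesis. Essentially all the difficulty lies in the analytic part, and the overall strategy follows that of \cite{Hu23}.

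To prove \eqref{firstmoments}, first expand $L(1/2,f\times g)$ by an approximate functional equation, so that the left side of \eqref{firstmoments} becomes a sum over $f\in\mathcal F_\theta[l]$ of products $\lambda_f(n)\lambda_g(m)$ against a smooth weight, with $mn$ ranging up to roughly the conductor of $f\times g$. Next, execute the $f$-sum by the refined Petersson trace formula for the family $\mathcal F_\theta[l]$: this yields a diagonal contribution, which after inserting the trivial bound for $\lambda_g$ and the size of the archimedean cutoff is $\ll N^\varepsilon p^{\mathfrak c_1/2+l}$, together with an off-diagonal consisting of Kloosterman-type sums at powers of $p$ twisted by a Bessel transform of the test function. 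Then apply to the off-diagonal the special Voronoï summation formula adapted to this level structure, dualizing the variable $m$ attached to $g$; the resulting sum is governed by integrals over $\mathbb Q_p$ of products of the local Whittaker functions of $f$ and of $g$ against the dual weight. Finally, estimate these integrals by the $p$-adic stationary phase method, which extracts the saving and bounds the off-diagonal by $N^\varepsilon p^{\mathfrak c_2/2}/p^{\mathfrak c_1/4+l/2}$. Adding the diagonal and off-diagonal bounds proves \eqref{firstmoments}.

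Granting \eqref{firstmoments}, suppose $L(1/2,f\times g)\ge 0$ for every $f\in\mathcal F_\theta[l]$. Then every term in \eqref{firstmoments} is nonnegative, so for a fixed $f_0\in\mathcal F_\theta[l]$ we may discard all other terms and obtain
\[
\frac{L(1/2,f_0\times g)}{\Vert f_0\Vert^2}\ll_{p,\varepsilon}N^\varepsilon\Bigl(p^{\mathfrak c_1/2+l}+\frac{p^{\mathfrak c_2/2}}{p^{\mathfrak c_1/4+l/2}}\Bigr).
\]
A lower bound for $\Vert f_0\Vert^2$ of Hoffstein--Lockhart type, or one read off directly from the construction of $\mathcal F_\theta[l]$, then removes the norm up to a factor $N^\varepsilon$. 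It remains to choose $l$ well. Writing $\mathfrak c_1=\delta\mathfrak c_2$ and $N=p^{\mathfrak c_2}$, the first term $p^{\mathfrak c_1/2+l}$ increases in $l$ while the second $p^{\mathfrak c_2/2-\mathfrak c_1/4-l/2}$ decreases, so one either pushes $l$ to the edge of the admissible range --- the choice $l=l_0\in\{0,1\}$, or $l$ near the cutoff $i_0$ --- which gives a bound of shape $N^{\max\{\delta/2,(1-\delta)/2\}+\varepsilon}$, or one takes the interior integer that balances the two terms, at which both have size $p^{\mathfrak c_2/3}=N^{1/3}$. The balancing $l$ satisfies $1\le l<i_0$ precisely when $\delta$ lies in a fixed subinterval of $(0,1)$ containing $[1/3,2/3]$, and there it yields $L(1/2,f_0\times g)\ll N^{1/3+\varepsilon}$. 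Taking the better of these choices in each range of $\delta$ and comparing with \eqref{convexity} gives the asserted hybrid subconvex bound for $\delta$ in any compact subset of $(0,1)$, reducing to the Weyl exponent $1/3$ for $1/3\le\delta\le 2/3$.

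The main obstacle is the final step of the analytic part. Because $f$ and $g$ are ramified at the same prime $p$, the integral of the product of their Whittaker functions does not factor into a benign product, and extracting the saving $p^{-\mathfrak c_1/4-l/2}$, rather than losing the whole conductor at $p$, demands a careful $p$-adic stationary phase analysis that tracks the interference between the two ramified Whittaker functions. Arranging the special Voronoï summation formula so that this analysis is possible is the other delicate point. By contrast, the approximate functional equation, the refined Petersson trace formula (used as a black box), the positivity reduction, the norm lower bound, and the optimization over $l$ are all routine.
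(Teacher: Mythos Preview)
Your outline matches the paper's overall strategy (approximate functional equation, refined Petersson, Vorono\"{\i}, $p$-adic stationary phase), and your treatment of the positivity reduction and the optimization over $l$ is essentially correct. But you have overlooked a genuine obstacle in the very first step, one that the paper singles out as its first novelty.

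The approximate functional equation for $L(1/2,f\times g)$ produces two sums, the second carrying the root number $\epsilon(f\times g)$. In order to interchange the sum over $f\in\mathcal F_\theta[l]$ with the $n$-sum and apply the trace formula, one must know that $\epsilon(f\times g)$ is \emph{constant} on the family. In the coprime-level settings of \cite{Holowinsky2014} and \cite{Hu23} this is immediate, but here $f$ and $g$ are both ramified at $p$, and the local root number $\epsilon(\pi_{1,p}\times\pi_{2,p})$ genuinely varies with $\pi_{1,p}$ even at fixed conductor. The paper establishes (Proposition~\ref{epsilon_factor_independence}), via the local Langlands correspondence and a case analysis over the possible quadratic algebras, that $\epsilon(\pi_{1,p}\times\pi_{2,p})$ depends only on the \'etale algebras $E,L$ underlying $\pi_{1,p},\pi_{2,p}$ and not on the inducing characters; this is precisely what guarantees constancy on $\mathcal F_\theta[l]$. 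Without this input your first step does not go through: you cannot pull the trace formula inside the dual sum, and the argument stalls before Vorono\"{\i} is ever applied. You list the approximate functional equation among the ``routine'' steps, but in this joint-ramification setting it is not.

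Two minor corrections: the Rankin--Selberg approximate functional equation here is a single sum in $n$ with coefficients $\lambda_f(n)\lambda_g(n)$, not a double sum in separate $m,n$; and to pass from $L(1/2,f_0\times g)/\|f_0\|^2\ll X$ to $L(1/2,f_0\times g)\ll X\cdot N^\varepsilon$ you need an \emph{upper} bound $\|f_0\|^2\ll N^\varepsilon$, not a lower bound.
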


\begin{remark}
The condition that $L(1/2,f\times g) \ge 0$ for all $f$ can be guaranteed when, for example, $g$ is dihedral. See the discussion in \cite[Sect. 1.1]{Holowinsky2014}.
\end{remark}
We will prove this in Section \ref{section5}. Here we briefly outline the proof strategy. We adopt the notation of Theorem \ref{Thm1}. Assume that $f$ and $g$ correspond to automorphic forms $\pi_1 = \otimes_\nu  \pi _{1,\nu }$, $\pi_2 = \otimes_\nu \pi _{2,\nu}$. First, by applying the approximate functional equation (\ref{Approxomate}) to the left side of (\ref{firstmoments}), we expand it into sums involving the Fourier coefficients of $f$ and $g$. This results in two types of sums: one without the $\epsilon$-factor and one with the $\epsilon$-factor. In Section \ref{epsilonfactors}, we prove that when $f \in  \mathcal F_\theta[l]$, the $\epsilon$-factor remains unchanged, allowing us to handle both sums simultaneously. Then, we interchange the order of summation and use the refined trace formula (Theorem \ref{trace formula}) to transform $\sum_{f \in  \mathcal F_\theta [l]} \lambda _f(n)$ into a sum of Generalized Kloosterman sums $G(n,1, \theta ,1/c^2)$, which are associated with the local Whittaker function of $\pi _{1,p}$. Next, for the summation over $n$, we use a special Voronoi summation formula (Theorem \ref{Voronoi}), which is associated with the local Whittaker function of \(\pi_{2,p}\). Eventually, the problem reduces to the estimation of the integral below:
\[
\mathcal P =\int _{\mathfrak o^\times } W^{(i)} _{\pi_1,p} (x) W^{(j)} _{\pi_2,p} (x) \psi \left (\frac {x} {\varpi ^l}\right )dx,
\]
where $\psi $ is  an additive character defined in Section \ref{preliminaries}, and  $W_{\pi_1, p} ^{(j)}$ is  a value  of Whittaker function for newform along certain double coset, which is defined in Definition \ref{definitionW^k}. In Section \ref{section3}, we study this integral carefully and obtain the square root cancellation using $p$-adic stationary method repeatedly.

There are two novelties in our article. Firstly, to study the first moments of $L$-functions via approximate functional equation, it is necessary to make sure that $\epsilon (f\times g)$ are same while $f$ belongs to some family, which was trivial in the past. In our case, we need to calculate $\epsilon (\pi_1 \times \pi_2)$ by the local Langlands correspondence and we find that even when the level of $f$ is fixed, the value of the $\epsilon$-factor changes as $f$ varies within the cusp newforms (see the proof of Proposition \ref{epsilon_factor_independence}). Thus, the classical Petersson trace formula no longer applies in this case and it is essential to introduce a small family of newforms and the refined Petersson trace formula.

Secondly, the square root cancellation of the local integral $\mathcal P$ is a nontrivial result which plays the similar role as \cite[Proposition 5.4]{HMN23}. This  integral can be viewed as a generalization of matrix coefficients as in \cite[(34)]{HS20} or $\widetilde G_p$ studied in \cite[Sect. 5.3]{Hu23}. We believe that the method we used can be applied to other local integrals about Whittaker function and other joint ramification cases in the future.

\section{Preliminaries}\label{preliminaries}
\subsection{Notation and prerequisites}
For any $x\in \mathbb R$, let $\lceil x \rceil$  be the least integer greater than or equal to $x$, and $\lfloor x \rfloor$ be the greatest integer less than or equal to $x$.

Globally we work with the rational field $\mathbb Q$. Let $\mathbb A$ be the ring of adeles over $\mathbb Q$. We fix an additive character $\psi$  on $\mathbb A / \mathbb Q$, which is a product of local additive characters $\psi _\nu$, where $\psi _\infty (x) = e^{-2\pi ix}$, and $\psi _p(x) = e^{2\pi ix'}$ where $x' \in \mathbb Q$ and $x' \equiv  x \mod {\mathfrak o_p}$.

Let $F$ denote a $p$-adic local field, $\mathfrak o=\mathfrak o_F$ be its ring of integers and $\varpi = \varpi _F$ be a uniformizer with order of residue field $p\ne 2$.  In general the level of an additive character $\psi _\nu$  is defined to be the smallest integer $\mathfrak c(\psi _\nu )$ such that $\psi _\nu$  is trivial on $\mathfrak p ^{\mathfrak c(\psi _\nu)} $. Let $U(n)= U_F(n) = 1+\mathfrak p^n$ when $n\ge 1$, and $U(0)= U_F(0) = \mathfrak o^\times$. The level of a multiplicative character $\chi$ is defined to be the smallest non-negative integer $\mathfrak c(\chi )$ such that $\chi$ is trivial on $U(\mathfrak c(\chi))$. 

Further, we equip the local field $F$ with two measures. First of all, we consider the Haar measure $\mu = \mu _F$  on $(F ,+)$ which normalized such that $\mu (\mathfrak o_p )=1$. The second measure is the Haar measure $\mu  ^\times$ on
 $(F ^\times ,\times )$ which is explicitly given by $\mu  ^\times = \frac {\zeta _F (1)} {|\cdot |_F } \mu $, where $\zeta _F (s)=(1-|\mathfrak o/\mathfrak p| ^{-s})^{-1}$ denotes the local Euler factor of the Riemann zeta function. In particular, one has $\mu  ^\times (\mathfrak o^\times )= 1$. At the archimedean place $\infty$ we take $\mu _\infty$ to be the standard Lebesgue measure and  simply choose $\mu_\infty ^\times = \mu _\infty |\cdot |^\infty$.

Let $E$ be a quadratic \'{e}tale algebra over $F$. When $E$ is a field, let $e_E$ be the ramification index of $E$. Let $\mathfrak o_E$, $\varpi _E$, $U_E(n)$, $\mu _E$ and $\mu_E^\times$ be defined similarly as for $F$. 

If $E = F\times F$ splits, let $e_E = 1$. Let $U_E(0) = \mathfrak o_E^\times  = \mathfrak o^\times \times \mathfrak o^\times$, and $U_E(n) = 1+\varpi ^n(\mathfrak o \times \mathfrak o$). Let $\mu _E$ and $\mu_E^\times$ be the product measure.

 Let $\theta$ be a character of $E^\times$. Let $\mathfrak c(\theta )$ be the level of $\theta $. If $E$ splits, we assume that $\theta |_{F^\times } =1$. Then we can write
 $\theta  = (\chi ,\chi^{-1}) $, and define $\mathfrak c(\theta ) = \mathfrak c(\chi)$.

Let $R$ be a commutative ring with $1$. In our case,$R$ will be either $\mathbb Q$, $\mathbb Q_p$, $\mathbb R$, or $\mathbb A$. We set $G(R)=\mathrm {GL}_2(R)$ and define the subgroups
\begin{equation*}
\begin{split}
&A(R) = \left \{ a(r)= \left (
\begin{array}{cc}
    r & 0 \\
    0 & 1
\end{array}
\right) : r\in R^\times  \right \}, \ 
N(R) = \left \{ n(r)=\left (
\begin{array}{cc}
    1 & r \\
    0 & 1
\end{array}
\right): r\in R \right \}, \\ 
&Z(R) = \left \{ z(r)=\left (
\begin{array}{cc}
    r & 0 \\
    0 & r
\end{array}
\right): r\in R^\times  \right \}.
\end{split}
\end{equation*}
Over $F$, let $K$ be the standard maximal compact
 open subgroup $GL_2(\mathfrak o)$ and 
 \begin{equation*}
 K_0(\mathfrak c) = \left \{ g=\left (
\begin{array}{cc}
    a & b \\
    c & d
\end{array}
\right)\in K: c\in \mathfrak p^{\mathfrak c}  \right \}.
\end{equation*}

Let $\pi$ be an irreducible cuspidal automorphic representation of $\mathrm {GL}_2$ over $\mathbb Q$ with trivial central character. Let $\pi_p$  denote its local component at finite place $p$. Let $\mathfrak c(\pi _p)$ be the level of $\pi _p$, which is the smallest integer such that $\pi _p$ contains an element invariant by $K_0(\mathfrak c(\pi _p))$.

We denote the Weyl element by 
\[
\omega = \left (
\begin{array}{cc}
    0 & 1 \\
    -1 & 0
\end{array}
\right).
\]
For an automorphic cuspidal form  $\phi$, define 
\[
    \| \phi \| = \langle \phi, \phi \rangle = \int _{Z(\mathbb A) \mathrm {GL}_2(\mathbb Q) \backslash \mathrm {GL}_2(\mathbb A)} | \phi (g) |^2 \mathrm d g.
\]
\subsection{Rankin-Selberg \texorpdfstring{$L$}{L}-function}\label{subsetion}
Here, we recall some fundamental facts concerning Rankin-Selberg $L$-functions, with our primary reference being \cite{Mic04}.  Given $f$ and $g$ two holomorphic cusp newforms with  nebentypus $\chi_f$, $\chi_g$  respectively, the Rankin-Selberg $L$-function is a degree four Euler product
\begin{equation*}
L(f\times g, s) = \sum_{n\ge 1} \frac {\lambda _{f \times g} (n)  } {n^s} .
\end{equation*}
Its standard analytic properties (analytic continuation, functional equation) have been known for a while (from the work of Rankin, Selberg and others, see \cite{Jac72},  \cite{JPSS81}).

We set $N = \sqrt {C(f\times g)}$ and 
\begin{equation*}
G(u) = \left (\cos{ \frac {\pi u} {4A_0}}\right )^{-5A_0},
\end{equation*}
for $A_0 \ge 1$. By a contour shift we infer from functional equation that for Re $s = 1/2$,
\begin{equation}\label{Approxomate}
L(s,f\times g) = \sum _{n\ge 1} \frac {\lambda _f(n) \lambda _g(n)} {n^s} W_s\left (\frac {n} {N}\right ) + \omega _{f\times g} (s) \sum _{n\ge 1} \frac {\overline { \lambda _f(n)  \lambda _g(n)}} {n^{1-s}} \widetilde {W}_{1-s}\left (\frac {n} {N}\right ),
\end{equation}
where 
\begin{equation*}
\begin{split}
& \omega_{f\times g} (s)= \epsilon (f\times g)N^{1 - 2s} \frac {L_\infty (1-s, f\times g )}{L_\infty (s,f\times g )},\\
& W_s(y) = \sum _{d\mid D^\infty} \frac {\gamma_{f\times g}(d)} {d^s} V_s (dy),\\
& V_s(y) = \frac 1 {2\pi i} \int _{(1)} \frac {L_\infty (s+u, f\times g )}{L_\infty (s,f\times g )}L(\chi_f \chi_g, 2s+2u)\frac {G(u)} u y^{-u} du ,
\end{split}
\end{equation*}
and  $\widetilde W_s$ is defined like $W_s$ except that $\gamma _{f\times g} (d)$ and $\chi_f\chi_g$ are replaced by $\overline{\gamma_{f\times g} (d)}$ and $\overline{\chi_f\chi_g}$.
\begin{lemma}
When $Re \ s = 1/2$ and $\chi_f \chi_g$ is trivial we have 
\begin{equation*}
W_s(y)\ll _{s,A_0} (1+y/P)^{-A_0} \log (1+y^{-1}).
\end{equation*}
\end{lemma}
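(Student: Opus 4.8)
The plan is to bound the individual terms $V_s(dy)$ by a contour-shift argument and then sum over $d \mid D^\infty$, which is a sparse set. Throughout, fix $s$ with $\operatorname{Re} s = 1/2$; the crucial consequence of the hypothesis that $\chi_f\chi_g$ is trivial is that $L(\chi_f\chi_g, 2s+2u) = \zeta(2s+2u)$ has a simple pole at $u = \tfrac12 - s$ (which lies on $\operatorname{Re} u = 0$), and this pole is exactly what will produce the factor $\log(1 + y^{-1})$; for a nontrivial character the Dirichlet $L$-function is entire and one would get the bare decay $(1+y/P)^{-A_0}$. The integrand of $V_s$, namely $u \mapsto \frac{L_\infty(s+u,f\times g)}{L_\infty(s,f\times g)}\,\zeta(2s+2u)\,\frac{G(u)}{u}\,y^{-u}$, is meromorphic in the vertical strip $|\operatorname{Re} u| < 2A_0$, where $G$ is holomorphic (its poles being at $u \in 2A_0(2\mathbb{Z}+1)$) and where the numerator $\Gamma$-factors have no poles (their poles occur only for $\operatorname{Re} u \le -\operatorname{Re} s = -\tfrac12$); inside this strip its only singularities are the simple pole at $u = 0$ coming from $1/u$ and the simple pole at $u = \tfrac12 - s$ coming from $\zeta$. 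On every vertical line in the strip the integral converges absolutely: the factor $G(u)$ decays like $e^{-5\pi|\operatorname{Im} u|/4}$ (and the $\Gamma$-factors decay too, by Stirling), which dominates the at-most-polynomial growth of $\zeta(2s+2u)$ and of $1/u$.

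For $y$ large I would move the contour from $\operatorname{Re} u = 1$ to a line $\operatorname{Re} u = A$ with $A$ as large as the holomorphy of $G$ permits; no poles are crossed, $|y^{-u}| = y^{-A}$, $\zeta(2s + 2u)$ is bounded there, the ratio of archimedean factors is $\ll_s P^{O(A)}$ by Stirling (with $P$ the archimedean scale in the statement), and $G(u)/u$ is integrable, yielding rapid polynomial decay, in particular the factor $(1 + y/P)^{-A_0}$. For $y$ small (say $y \le 1$) I would instead move the contour to $\operatorname{Re} u = -\delta$ for a small fixed $\delta \in (0, \tfrac12)$, staying inside the strip; since $|y^{-u}| = y^{\delta} \le 1$ there, the remaining integral is $O_{s, A_0}(1)$, and one picks up the residues at $u = 0$ and $u = \tfrac12 - s$. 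For $s$ bounded away from $\tfrac12$ these two residues are each $O_s(1)$ (note $|y^{\,s - 1/2}| = 1$), while at $s = \tfrac12$ the two poles coalesce into a genuine double pole whose residue involves $\frac{d}{du}\big(y^{-u}\big)\big|_{u=0} = -\log y$, and for $s$ near $\tfrac12$ the $\big(\tfrac12 - s\big)^{-1}$ singularities present in the two separate residues cancel when they are added (via $\zeta(1 + w) = w^{-1} + \gamma + O(w)$ and $y^{\,s - 1/2} = 1 + (s - \tfrac12)\log y + O((s-\tfrac12)^2)$); in all cases the total residue contribution is $\ll_{s, A_0} 1 + |\log y| \ll_{s} \log(1 + y^{-1})$ for $0 < y \le 1$. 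Combining the two ranges, $V_s(y) \ll_{s, A_0} (1 + y/P)^{-A_0}\log(1 + y^{-1})$. I expect this confluence of the two poles near $s = \tfrac12$ — and, relatedly, tracking that the $\log$ is genuinely needed precisely because $\chi_f\chi_g$ is trivial — to be the only delicate point; everything else is routine contour shifting and Stirling estimation.

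To finish, write $W_s(y) = \sum_{d \mid D^\infty}\gamma_{f\times g}(d)\,d^{-s}\,V_s(dy)$. Using $|d^{-s}| = d^{-1/2}$, the standard polynomial bound $\gamma_{f\times g}(d) \ll_\epsilon d^{\epsilon}$ for the ramified Rankin--Selberg coefficients (any bound $\ll d^{1/2 - c}$ would suffice), and the monotonicity facts that $d \ge 1$ forces $(1 + dy/P)^{-A_0} \le (1 + y/P)^{-A_0}$ and $\log(1 + (dy)^{-1}) \le \log(1 + y^{-1})$, one gets
\[
|W_s(y)| \ll_{s, A_0, \epsilon} (1 + y/P)^{-A_0}\log(1 + y^{-1})\sum_{d \mid D^\infty} d^{-1/2 + \epsilon}.
\]
Since $D$ is a fixed integer (here a power of the single prime $p$), the series $\sum_{d \mid D^\infty} d^{-1/2+\epsilon} = \prod_{q \mid D}\big(1 - q^{-1/2 + \epsilon}\big)^{-1}$ converges and is $O_p(1)$, which gives the claimed bound $W_s(y) \ll_{s, A_0}(1 + y/P)^{-A_0}\log(1 + y^{-1})$.
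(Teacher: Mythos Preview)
Your argument is correct and is precisely the standard contour-shift proof that the paper defers to via its citation of \cite[Lemma 3.1, Remark 3.3]{Mic04}: shift right for decay in $y$, shift left past the simple poles at $u=0$ and $u=\tfrac12-s$ (which coalesce into a double pole at $s=\tfrac12$, producing the $\log$), and then sum the resulting bound for $V_s$ over the sparse set $d\mid D^\infty$. There is nothing to add; the paper itself gives no independent argument.
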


\begin{proof} 
See \cite[Lemma 3.1, Remark 3.3]{Mic04}.
\end{proof}
\subsection{Compact induction and local Langlands correspondence} 
Now, we will briefly explain that the representations of \(\text{GL}_2(F)\) can be parameterized in two ways: one via induction from the small group, and the other via the Langlands correspondence. We  will also briefly introduce their relationship. Let $F = \mathbb Q_p$ with $p \ne 2$. Let $E$ be a quadratic extension of $F$, and $\theta$ be a character of $E$ ,where $e_E$ denotes the ramification index of $E$ over $F$, and $c(\theta )$ is the level of $\theta$. Then there exists a supercuspidal representation $\pi _\theta$ of $\mathrm {GL}_2(F)$ constructed via compact induction. Moreover, all supercuspidal representations arise in this manner (for an explicit construction,  refer to \cite{BushnellHenniart2006}. When $\pi_\theta$ has a trivial central character,  it follows that  $\theta |_{F^\times } =1$. In this paper, we utillize compact induction to characterize  small families and Whittaker models.

On the other hand, let $W_F$ be the absolute Weil group of the local field $K$ (see \cite{Tat79} for properties and further references). The modified Weil-Deligne group is the product group $DW_F =\mathrm {SL}_2 (\mathbb C) \times W_F$. Consider continuous finite dimensional complex semi-simple representations $\sigma$ of this group which are analytic when restricted to $\mathrm {SL}_2 (\mathbb C )$. For the purposes of this paper it will only be necessary to consider representations trivial on $\mathrm {SL}_2(\mathbb C)$.

The class field theory isomorphism of $W_F^{ab}$ with $F^\times$  will be normalized so that the geometric Frobenius element corresponds to a generator of $\mathfrak p$. With this chosen isomorphism the characters of $F^\times$  will be identified with continuous one-dimensional complex representations of $W_F$. In particular, nontrivial quadratic characters of $F^\times$  correspond to quadratic separable extensions $E$ of $F$. 
\begin{definition}
The symbol $\omega _{E/F}$ will denote the nontrivial character of $F^\times$  with kernel equal to the group of norms from $E^\times$  to $F^\times $.
\end{definition}
 
\begin{definition} 
 When $E$ is an extension of $F$, the Weil group $W_E$ is a finite index subgroup of $W_F$, and similarly for the modified Weil-Deligne groups. The restriction of $\sigma$ to $DW_E$  will be denoted by $\sigma _E$. 
\end{definition} 

\begin{lemma}
For one-dimensional representations $\theta$  of $W_F$, $\theta _E$ corresponds to $\theta \circ  N_{E/F}$, where $N$ is the norm map. 
\end{lemma}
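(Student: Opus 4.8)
The plan is to reduce the statement to the norm-compatibility of the local Artin reciprocity map, which is one of the standard functoriality properties collected in \cite{Tat79}. Recall that throughout this subsection a character of $F^{\times}$ is identified with a continuous one-dimensional representation of $W_{F}$ by composition with the reciprocity isomorphism $\mathrm{rec}_{F}\colon F^{\times}\xrightarrow{\ \sim\ }W_{F}^{\mathrm{ab}}$, normalized so that a uniformizer maps to a geometric Frobenius; the same convention is in force for the field extension $E/F$, with $W_{E}$ the corresponding finite-index subgroup of $W_{F}$. Under these identifications the claim is that restricting a character of $W_{F}$ to $W_{E}$ corresponds to precomposing the associated character of $F^{\times}$ with $N_{E/F}$.

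First I would record the relevant commutative square. The inclusion $W_{E}\hookrightarrow W_{F}$ induces on abelianizations a homomorphism $j\colon W_{E}^{\mathrm{ab}}\to W_{F}^{\mathrm{ab}}$, and one has $j\circ\mathrm{rec}_{E}=\mathrm{rec}_{F}\circ N_{E/F}$ as maps $E^{\times}\to W_{F}^{\mathrm{ab}}$. This is precisely the assertion that, under reciprocity, the inclusion of Weil groups corresponds to the norm map $N_{E/F}$ (as opposed to the transfer $W_{F}^{\mathrm{ab}}\to W_{E}^{\mathrm{ab}}$, which corresponds to the inclusion $F^{\times}\hookrightarrow E^{\times}$). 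It is part of the package of properties of $\mathrm{rec}$ recalled in \cite{Tat79}, and it is compatible with the geometric-Frobenius normalization fixed above, so no sign or inversion issue arises.

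Then I would simply unwind definitions. A one-dimensional $\theta$ of $W_{F}$ (trivial on $\mathrm{SL}_{2}(\mathbb{C})$, as in the excerpt — the monodromy operator being automatically zero in dimension one) factors through $\overline{\theta}\colon W_{F}^{\mathrm{ab}}\to\mathbb{C}^{\times}$, and its restriction $\theta_{E}=\theta|_{W_{E}}$ factors through $\overline{\theta}\circ j$ on $W_{E}^{\mathrm{ab}}$. Hence the character of $E^{\times}$ attached to $\theta_{E}$ is $\theta_{E}\circ\mathrm{rec}_{E}=\overline{\theta}\circ j\circ\mathrm{rec}_{E}=\overline{\theta}\circ\mathrm{rec}_{F}\circ N_{E/F}$, which equals $(\overline{\theta}\circ\mathrm{rec}_{F})\circ N_{E/F}$; and $\overline{\theta}\circ\mathrm{rec}_{F}$ is by definition the character of $F^{\times}$ denoted $\theta$ in the statement. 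Thus $\theta_{E}$ corresponds to $\theta\circ N_{E/F}$, as claimed. The split étale case $E=F\times F$ is handled identically, reading $\mathrm{rec}_{E}$ and $N_{E/F}$ componentwise as in Section~\ref{preliminaries}.

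I do not expect a genuine obstacle here: the content is entirely the cited class-field-theory functoriality, and the only thing to watch is bookkeeping — using the same Frobenius normalization on the $E$-side and the $F$-side, and invoking the inclusion-to-norm compatibility rather than its transfer counterpart. If a self-contained argument were desired one could instead derive $j\circ\mathrm{rec}_{E}=\mathrm{rec}_{F}\circ N_{E/F}$ from Lubin--Tate theory or from the explicit two-cocycle description of the local reciprocity map, but in the present context citing \cite{Tat79} is the natural and cleanest choice.
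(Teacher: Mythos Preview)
Your argument is correct: the statement is precisely the norm-compatibility of the local reciprocity isomorphism, and your unwinding via $j\circ\mathrm{rec}_E=\mathrm{rec}_F\circ N_{E/F}$ is the standard and clean way to see it. The paper itself offers no proof for this lemma --- it is recorded as a known fact from class field theory, implicitly deferring to \cite{Tat79} --- so there is nothing to compare against; your proposal simply supplies the omitted justification.
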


\begin{definition}
For a representation \(\eta\) of \(W_E\), we denote the induced representation of \(W_F\) from \(\eta\), i.e., \(\mathrm{Ind}_{W_E}^{W_F} \eta\), by \(\mathrm{Ind}_E^F \eta\). For an irreducible, admissible representation \(\pi\) of \(\mathrm{GL}_2(F)\), let \(\sigma(\pi)\) stand for the two-dimensional representation of \(DW_F\) associated to \(\pi\) via the local Langlands correspondence (see \cite{Kut80}).
\end{definition}
Next, we will present the specific content of the $p$-adic local Langlands correspondence for $\mathrm {GL}_2$.

\begin{proposition}
For the irreducible principal series $\pi =\mathrm {Ind} (\chi_1\otimes \chi_2)$, $\sigma (\pi)$ is a sum of two characters $\chi_1$, $\chi_2$. For the supercuspidal represetation $\pi$  of $\mathrm {GL}_2(F)$, $\sigma (\pi)$ is obtained by inducing a character $\theta$ of a separable quadratic extension $E$. Then a representation $\pi$ of $\mathrm{GL}_2(F)$ can be associated with a character $\theta$ over a quadratic étale algebra $E$ over $F$ by local Langlands correspondence.
\end{proposition}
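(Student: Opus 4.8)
The plan is to treat this statement as a recollection of the explicit $p$-adic local Langlands correspondence for $\mathrm{GL}_2$ and to assemble it from three standard ingredients: the classification of the representations in play, compatibility of the correspondence with parabolic induction, and Kutzko's description of the supercuspidal case. Since the paper only needs parameters $\sigma$ trivial on $\mathrm{SL}_2(\mathbb{C})$, the relevant $\pi$ are exactly the irreducible principal series and the supercuspidal representations, so no special (Steinberg-type) representations arise and the dichotomy in the statement is exhaustive for these purposes.

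First, for $\pi = \mathrm{Ind}(\chi_1\otimes\chi_2)$ irreducible, I would invoke compatibility of the local Langlands correspondence with parabolic induction together with the $\mathrm{GL}_1$ case, which is class field theory with the normalization fixed above (geometric Frobenius corresponds to a uniformizer). The parameter attached to a normalized parabolic induction is the direct sum of the parameters of the inducing characters, so $\sigma(\pi)=\chi_1\oplus\chi_2$ as a representation of $W_F$ trivial on $\mathrm{SL}_2(\mathbb{C})$; this is consistent with the characterization of the correspondence by $L$- and $\varepsilon$-factors of pairs, since all these factors are products over the torus.

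Second, for $\pi$ supercuspidal: because $p\ne 2$, every irreducible two-dimensional semisimple representation of $W_F$ is induced, of the form $\mathrm{Ind}_{W_E}^{W_F}\tilde\theta$ from a character $\tilde\theta$ of $W_E$ in general position (i.e. $\tilde\theta\ne\tilde\theta^\gamma$, with $\gamma$ the nontrivial element of $\Gal(E/F)$) for a separable quadratic extension $E/F$; conversely every supercuspidal $\pi$ of $\mathrm{GL}_2(F)$ is some $\pi_\theta$ for such data via the compact-induction construction recalled from \cite{BushnellHenniart2006}. I would then cite Kutzko's theorem \cite{Kut80} that these two parametrizations match under the correspondence: $\sigma(\pi_\theta)\cong \mathrm{Ind}_E^F(\theta\cdot\mu)$, where $\theta$ on $E^\times$ is identified with a character of $W_E$ by class field theory and $\mu$ is an explicit at-most-quadratic correction character coming from the transfer factor (this is exactly the discrepancy between the central character $\theta|_{F^\times}$ and $\det\mathrm{Ind}_E^F\theta = \omega_{E/F}\cdot\theta|_{F^\times}$). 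For the applications in this paper only the shape ``induced from a character of a quadratic extension'' matters, so I would keep $\mu$ implicit, or absorb it by replacing $\theta$ with $\theta\mu$.

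Finally, to obtain the uniform statement over quadratic \'etale algebras: in the principal series case set $E=F\times F$ and $\theta=(\chi_1,\chi_2)$, so that $\theta|_{F^\times}=\chi_1\chi_2$ is trivial precisely when the central character is trivial, and read $\mathrm{Ind}_E^F$ for the split algebra as $\chi_1\oplus\chi_2$; in the supercuspidal case take $E$ to be the quadratic field from the previous step. In both cases $\sigma(\pi)=\mathrm{Ind}_E^F\theta$ for a quadratic \'etale algebra $E/F$ and a character $\theta$ of $E^\times$, which is the assertion. The one point requiring care --- more a bookkeeping matter than a genuine obstacle --- is the normalization in the supercuspidal case: pinning down the exact character entering the compact induction on the $\mathrm{GL}_2$ side versus the one appearing in the Galois induction (including the quadratic twist $\mu$), and keeping the ramified and unramified cases for $E$ separate. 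All of this is supplied by the references and does not affect the qualitative statement used later in the paper.
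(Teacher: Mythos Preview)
Your proposal is correct in content, but you should know that the paper does not actually supply a proof of this proposition: it is stated as a recollection of the explicit $p$-adic local Langlands correspondence for $\mathrm{GL}_2$, with the references \cite{Kut80} and \cite{BushnellHenniart2006} already cited in the surrounding text, and no proof environment follows it. Your write-up is therefore not so much an alternative argument as a fleshing-out of what the paper leaves implicit.

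That said, your expansion is accurate and well-organized. The three ingredients you identify---compatibility with parabolic induction for the principal series, Kutzko's theorem for the supercuspidal case (with the caveat that $p\neq 2$ ensures every irreducible two-dimensional $W_F$-representation is dihedral), and the uniform \'etale-algebra formulation---are exactly the right ones. Your remark about the quadratic correction $\mu$ is also on point: the paper handles this precisely in the theorem immediately following the proposition, where it records $\sigma(\pi)=\mathrm{Ind}_E^F(\theta\Delta_\theta^{-1})$ with $\Delta_\theta$ defined just above. So the bookkeeping you flag as ``requiring care'' is in fact made explicit by the paper, just deferred to the next statement rather than folded into this one.
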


For a quadratic field extension of $E/F$, let $\lambda (\psi )$ be the Langlands $\lambda$-function 
\begin{definition}(1) If $E$ is inert, define $\Delta_\theta$ to be  the unique unramified character of $E^\times$  of order $2$.

(2) If $E$ is ramified and $c(\theta)$ is even associate $\alpha_\theta$ to $\theta$ as in Lemma \ref{character}. Then define $\Delta_\theta$ to be the unique level $1$ character of $E^\times$ such that
\[
 \Delta _\theta |_{F^\times } = \omega _{E/F}, \ \Delta _\theta (\varpi _E) = \omega _{E/F} (\varpi _E^{c(\theta )-1} \alpha _\theta ) \lambda _{E/F} ^{c(\theta)-1}(\psi ).
\]
\end{definition}

\begin{theorem}[\cite{BushnellHenniart2006}] For a representation \(\pi\) that is obtained by compact induction from a character \(\theta\) over the quadratic extension E, its corresponding Deligne-Weil representation under the local Langlands correspondence satisfies \(\sigma(\pi) = \mathrm{Ind}_E^F (\theta \Delta_\theta^{-1})\).
\end{theorem}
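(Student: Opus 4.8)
The plan is to deduce the formula by combining the essentially tame classification of supercuspidals of $\mathrm{GL}_2(F)$ with the characterization of the local Langlands correspondence through central characters and $\epsilon$-factors of pairs. Write $\pi=\pi_\theta=\mathrm{ind}_J^{\mathrm{GL}_2(F)}\Lambda$ for the compactly induced supercuspidal, with $\Lambda$ the prescribed extension of $\theta$ to the relevant compact-mod-centre subgroup $J\supset E^\times$; recall that the central character of $\pi_\theta$ is $\theta|_{F^\times}$. By the essentially tame theory of \cite{BushnellHenniart2006}, $\sigma(\pi_\theta)$ is an irreducible $2$-dimensional representation of $W_F$ whose restriction to $W_E$ is reducible (the base change of $\pi_\theta$ to $E$ being a non-supercuspidal principal series), so $\sigma(\pi_\theta)\cong\mathrm{Ind}_E^F\zeta$ for a character $\zeta$ of $E^\times$; matching conductor exponents (that of $\pi_\theta$ is $2\mathfrak c(\theta)$ if $E/F$ is inert and $\mathfrak c(\theta)+1$ if $E/F$ is ramified --- since $p\neq 2$ these are the only cases) gives $\mathfrak c(\zeta)=\mathfrak c(\theta)$, and comparing the simple characters underlying the two constructions shows $\zeta=\theta\mu^{-1}$ for a character $\mu$ of $E^\times$ of level at most $1$. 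The content of the theorem is the explicit identification $\mu=\Delta_\theta$, and that is what I would carry out.

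The restriction of $\mu$ to $F^\times$ is forced by the central character. Since the correspondence matches $\omega_{\pi_\theta}=\theta|_{F^\times}$ with $\det\sigma(\pi_\theta)$, and $\det\mathrm{Ind}_E^F\zeta=\omega_{E/F}\cdot(\zeta|_{F^\times})$ as characters of $F^\times$, we obtain $\theta|_{F^\times}=\omega_{E/F}\cdot\theta|_{F^\times}\cdot\mu^{-1}|_{F^\times}$, hence $\mu|_{F^\times}=\omega_{E/F}$. If $E/F$ is inert, a character of $E^\times$ of level $\le 1$ restricting to $\omega_{E/F}$ (the unramified quadratic character of $F^\times$) must be the unramified character of $E^\times$ of order $2$ --- which is $\Delta_\theta$ by definition --- so the inert case is settled. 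If $E/F$ is ramified, this determines $\mu$ on $F^\times U_E(1)$ but leaves open the single value $\mu(\varpi_E)$ (determined so far only up to sign).

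To pin down $\mu(\varpi_E)$ in the ramified case I would compare $\epsilon$-factors of pairs, which the correspondence preserves: $\epsilon(\pi_\theta\otimes\chi,\psi,s)=\epsilon(\mathrm{Ind}_E^F(\theta\mu^{-1})\otimes\chi,\psi,s)$ for every character $\chi$ of $F^\times$. On the automorphic side I would evaluate $\epsilon(\pi_\theta\otimes\chi,\psi,s)$ from the local functional equation in the Whittaker (Kirillov) model of the compactly induced representation --- equivalently, invoking Kutzko's computation \cite{Kut80} --- obtaining a tame Gauss sum attached to $\theta$ over $E$ times an elementary factor. On the Galois side, inductivity of $\epsilon$-factors gives $\epsilon(\mathrm{Ind}_E^F\zeta\otimes\chi,\psi,s)=\lambda_{E/F}(\psi)\,\epsilon\bigl(\zeta\cdot(\chi\circ N_{E/F}),\psi_E,s\bigr)$ with $\psi_E=\psi\circ\Tr_{E/F}$, again a tame Gauss sum over $E$ times the Langlands constant $\lambda_{E/F}(\psi)$. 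Letting $\chi$ range over the unramified twists of a fixed character and dividing the two identities cancels the bulk Gauss sum and isolates exactly $\mu(\varpi_E)=\omega_{E/F}\bigl(\varpi_E^{\mathfrak c(\theta)-1}\alpha_\theta\bigr)\,\lambda_{E/F}(\psi)^{\mathfrak c(\theta)-1}$ when $\mathfrak c(\theta)$ is even (with $\alpha_\theta$ the approximating character attached to $\theta$), and the analogous, simpler identity when $\mathfrak c(\theta)$ is odd; these are precisely the defining relations for $\Delta_\theta$. Hence $\mu=\Delta_\theta$, so $\sigma(\pi_\theta)=\mathrm{Ind}_E^F(\theta\Delta_\theta^{-1})$.

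The main obstacle is this last step: evaluating $\epsilon(\pi_\theta\otimes\chi,\psi,s)$ explicitly and reconciling it with the Galois-side Gauss sum. This requires an honest description of the Whittaker model of the compactly induced representation, together with careful bookkeeping of the normalizations of $\psi$, of the additive character $\psi_E$ on $E$, and of the Langlands constant $\lambda_{E/F}(\psi)$. The genuinely delicate case is $E/F$ ramified with $\mathfrak c(\theta)$ even, where $\theta$ is not itself attached to an admissible pair and must be replaced by its approximation $\alpha_\theta$; this substitution is exactly what produces the factor $\lambda_{E/F}(\psi)^{\mathfrak c(\theta)-1}$ in the formula for $\Delta_\theta(\varpi_E)$. (A variant, also due to Bushnell--Henniart \cite{BushnellHenniart2006}, replaces the pairwise $\epsilon$-factor comparison by the characterization of the correspondence via base change and automorphic induction, but the computational core --- the same tame Gauss-sum identity --- is unchanged.)
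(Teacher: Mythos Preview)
The paper does not give its own proof of this theorem: it is stated with attribution to \cite{BushnellHenniart2006} and used as a black box, with no argument supplied. Your proposal is thus not comparable to a proof in the paper but rather to the original source. What you outline is indeed the shape of the Bushnell--Henniart argument (reduce to $\sigma(\pi_\theta)=\mathrm{Ind}_E^F\zeta$ by base change, then identify the rectifying character $\mu=\theta\zeta^{-1}$ via the central character and an $\epsilon$-factor comparison), so as a sketch it is on the right track; however, the step ``comparing the simple characters underlying the two constructions shows $\zeta=\theta\mu^{-1}$ for $\mu$ of level at most $1$'' is doing a great deal of work and is precisely the hard part of the essentially tame correspondence --- you have not explained which two constructions are being compared or why they agree up to a tame twist. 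For the purposes of this paper a citation suffices.
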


\begin{proposition}\label{antiinvariant}
Let $E/F$ be a quadratic field extension and $\tau$ be a nontrivial element of $\mathrm {Gal} (E/F)$. Let $\theta$ be a character of $E^\times$ such that $\theta |_{F^\times } =1$. Set $\Theta = \theta \Delta _\theta ^{-1}$. Then we have $\Theta ^\tau = \Theta ^{-1}$.
\end{proposition}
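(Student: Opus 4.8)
The plan is to reduce the identity $\Theta^\tau = \Theta^{-1}$ to a statement about the restriction $\Theta|_{F^\times}$, and then read off that restriction from the definition of $\Delta_\theta$.

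First I would note that $\Theta^\tau = \Theta^{-1}$ is equivalent to $\Theta^\tau \cdot \Theta = 1$, and that for $x \in E^\times$ we have $(\Theta^\tau\Theta)(x) = \Theta(\tau(x))\,\Theta(x) = \Theta\big(\tau(x)\,x\big) = \Theta\big(N_{E/F}(x)\big)$. Hence the claim is equivalent to $\Theta \circ N_{E/F}$ being the trivial character of $E^\times$, i.e. to $\Theta|_{F^\times}$ being trivial on the norm subgroup $N_{E/F}(E^\times) \subseteq F^\times$. By local class field theory $N_{E/F}(E^\times)$ is exactly the kernel of $\omega_{E/F}$ and has index $2$ in $F^\times$, so it suffices to show $\Theta|_{F^\times} \in \{1, \omega_{E/F}\}$; in fact I will show $\Theta|_{F^\times} = \omega_{E/F}$.

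Next, since $\Theta = \theta\,\Delta_\theta^{-1}$ and $\theta|_{F^\times} = 1$ by hypothesis, we get $\Theta|_{F^\times} = \Delta_\theta^{-1}|_{F^\times}$, so everything comes down to identifying $\Delta_\theta|_{F^\times}$, which I would do case by case according to its definition. If $E/F$ is ramified (in which case $c(\theta)$ is necessarily even, given $\theta|_{F^\times}=1$), then $\Delta_\theta|_{F^\times} = \omega_{E/F}$ holds by definition and we are done. If $E/F$ is inert, then $\Delta_\theta$ is the unramified character of $E^\times$ of order $2$; restricting to $F^\times$, it is unramified (trivial on $\mathfrak o_F^\times \subseteq \mathfrak o_E^\times$) and sends a uniformizer $\varpi = \varpi_F$ — which is also a uniformizer of $E$ in the unramified case — to $\Delta_\theta(\varpi)$, a primitive square root of unity, hence $-1$. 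Thus $\Delta_\theta|_{F^\times}$ is the unramified quadratic character of $F^\times$, which is precisely $\omega_{E/F}$ for the unramified quadratic extension. In both cases $\Delta_\theta|_{F^\times} = \omega_{E/F}$, so $\Theta|_{F^\times} = \omega_{E/F}^{-1} = \omega_{E/F}$, and the reduction of the second paragraph gives $\Theta^\tau = \Theta^{-1}$.

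The only non-formal point — hence the main obstacle — is the inert case: one must check that the order-$2$ unramified character of $E^\times$ does not restrict to the trivial character of $F^\times$, which uses that $\varpi_F$ remains a uniformizer of $E$ (so is not a norm) and the identification of the unramified quadratic character of $F^\times$ with $\omega_{E/F}$. A heavier but conceptually cleaner alternative would be to invoke the Bushnell–Henniart description $\sigma(\pi) = \mathrm{Ind}_E^F(\Theta)$ together with the determinant formula $\det \mathrm{Ind}_E^F(\Theta) = \omega_{E/F}\cdot(\Theta|_{F^\times})$ and the fact that $\theta|_{F^\times} = 1$ forces $\pi$ to have trivial central character, hence $\det\sigma(\pi) = 1$ and $\Theta|_{F^\times} = \omega_{E/F}$; I would carry out the direct computation above and mention this viewpoint as a remark.
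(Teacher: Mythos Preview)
Your proof is correct and close in spirit to the paper's, though organized a bit differently. The paper separates $\Theta = \theta\,\Delta_\theta^{-1}$ and shows $\theta^\tau = \theta^{-1}$ and $\Delta_\theta^\tau = \Delta_\theta^{-1}$ individually: for the ramified case it uses exactly your norm computation $\Delta_\theta(x)\,\Delta_\theta(\tau x) = \omega_{E/F}(N_{E/F}(x)) = 1$, while in the inert case it does not compute the restriction to $F^\times$ at all but simply observes that $\Delta_\theta^\tau$ is again unramified of order $2$, hence equals $\Delta_\theta = \Delta_\theta^{-1}$ by uniqueness. Your reduction to $\Theta|_{F^\times} = \omega_{E/F}$ is slightly more uniform across the two cases and makes the link to the central character (your closing remark via $\det\,\mathrm{Ind}_E^F\Theta$) transparent; the paper's inert argument is a touch slicker since it sidesteps identifying $\omega_{E/F}$ explicitly.
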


\begin{proof}
Let $x\in E^\times$, then by the definition $\theta (x) \theta ^\tau (x) = \theta (x\tau (x)) =1$ . Thus $\theta ^\tau = \theta ^{-1}$ and $\Delta _\theta ^\tau = \Delta ^{-1}$.  If $E$ is inert, by the uniqueness of $\Delta _\theta$ we have $\Delta _\theta = \Delta _\theta ^\tau = \Delta _\theta ^{-1}$. If $E$ is ramified, then $\Delta _\theta (x) \Delta _\theta (\tau (x)) = \Delta _\theta (x\tau (x))=\omega _{E/F}(N_{E/F}(x))=1 $. Thus the statement holds.
\end{proof}

\subsection{Small families and the refined trace formula}
In this subsection, we state the refined trace formula as developed in \cite{Hu23}.
\begin{definition}\label{small}Let $E$ be an \'{e}tale quadratic algebra over $F$. Let \(\theta_i\) (\(i = 1, 2\)) be characters of \(E^\times\) satisfying \(\theta_i|_{F^\times} = 1\) and \(\mathfrak{c}(\theta_1) = \mathfrak{c}(\theta_2) = \mathfrak{c}\). Let \(i_0 = \mathfrak{c}(\theta)/e_E\). For \(0 \leq l < i_0\), we write \(\theta_1 \sim_l \theta_2\) if \(\mathfrak{c}(\theta_1^{-1}\theta_2) \leq e_E n\). For a fixed character \(\theta\) with \(\theta|_{F^\times} = 1\), let \(\theta[l] = \{ \theta' \mid \theta' \sim_l \theta,\  \theta|_{F^\times} = 1 \}\). We define  
\[
\pi_\theta[n] = \{ \pi_{\theta'} \mid \theta' \in \theta[n] \}.
\]
For a fixed even weight \(\kappa \ge 4\), we define \(\mathcal{F}_\theta[l]\) as the subset of holomorphic newforms with weight \(\kappa\), level \(p^{\mathfrak{c}}\) (where \(\mathfrak{c} \ge 3\)), and trivial nebentypus, whose associated local representation \(\pi_p\) lies in the family \(\pi_\theta[l]\).
\end{definition}

\begin{theorem} [\cite{Hu23} Theorem 1.7] \label{trace formula}
For \(1 \le l < i_0\), we have
\begin{equation*}
\begin{split}
&\sum_{f \in \mathcal{F}_\theta[l]} \frac{1}{\|f\|^2} \lambda_f(m_1) \overline{\lambda_f(m_2)}\\
&= 
C_{\mathcal{F}}[l] \frac{(4\pi)^{\kappa-1}}{(\kappa - 2)!} \bigg( \delta_{m_1 = m_2} + 2\pi i^\kappa \sum_{\substack{c_l \mid c \\ c > 0}} \frac{G(m_1, m_2, \theta, c^{-2})}{c} J_{\kappa - 1} \bigg( \frac{4\pi \sqrt{m_1 m_2}}{c} \bigg) \bigg),
\end{split}
\end{equation*}
where
\[
\begin{split}
c_l &\asymp  p^{\mathfrak{c}/2+l}, \\
C_{\mathcal{F}}[l] &= p^\mathfrak{c} \, [\theta[l] : \theta[0]] \, D_{\mathcal{F}} \asymp_p p^{\mathfrak{c}/2 + l}.
\end{split}
\]
\(G(m_1, m_2, \theta, c^{-2})\) denotes the generalized Kloosterman sum defined in Lemma \ref{generalkloosterman}, and \(J_{\kappa - 1}\) is the Bessel function of the first kind.
\end{theorem}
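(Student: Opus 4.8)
The plan is to derive this identity from a refined Petersson formula for $\mathrm{GL}_2$, obtained by running a relative trace formula in which the place $p$ carries a type-theoretic test function tailored to the family $\pi_\theta[l]$. For $m\ge 1$ I would introduce the adelic Poincar\'e series $P_m$ on $\mathrm{GL}_2(\mathbb A)$ attached to a pure tensor $\varphi=\varphi_\infty\otimes\bigotimes_{q\ne p}\varphi_q\otimes\varphi_p$, where $\varphi_\infty$ is the lowest weight vector of the weight-$\kappa$ holomorphic discrete series, each $\varphi_q$ ($q\ne p$) is the normalised unit of the Hecke algebra of $\mathrm{GL}_2(\mathbb Z_q)$ (imposing level $1$ away from $p$), and $\varphi_p$ is a suitably chosen test function in the Hecke algebra of $\mathrm{GL}_2(\mathbb Q_p)$ built from the compact-induction model of $\theta$ recalled in Section \ref{preliminaries}. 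The crucial local input I must establish is that the resulting local weight, evaluated on the new vector of $\pi_{f,p}$ (writing $\pi_f=\otimes_\nu\pi_{f,\nu}$ for the automorphic representation generated by $f$), vanishes unless $\pi_{f,p}\in\pi_\theta[l]$ and otherwise equals one and the same positive constant for every $f$ in the family, the parameter $l$ being pinned down through the support of $\varphi_p$ by the conductor bound $\mathfrak c(\theta'^{-1}\theta)\le e_El$ of Definition \ref{small}.

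On the spectral side, since $\kappa\ge 4$ and $m\ge 1$ the Poincar\'e series $P_m$ is cuspidal, so its spectral expansion runs only over holomorphic cusp forms of weight $\kappa$; the factors $\varphi_q$ cut the level down to a power of $p$ and $\varphi_p$ then selects exactly $\mathcal F_\theta[l]$. Extracting the $(m_1,m_2)$ Whittaker--Fourier coefficient of $P_{m_1}$ would produce $\sum_{f\in\mathcal F_\theta[l]}\|f\|^{-2}\lambda_f(m_1)\overline{\lambda_f(m_2)}$ multiplied by the archimedean normalisation $(4\pi)^{\kappa-1}/(\kappa-2)!$, the trivial tame constants, and the $f$-independent local constant at $p$; the product of all these constants is what I would identify with $C_{\mathcal F}[l]$. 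To recover the shape $p^{\mathfrak c}[\theta[l]:\theta[0]]D_{\mathcal F}$ I would factor the newform-normalisation volume as $p^{\mathfrak c}$ times an $l$-independent quantity $D_{\mathcal F}$ and read the index $[\theta[l]:\theta[0]]$ off the matrix-coefficient computation; a short count of characters of $E^\times$ trivial on $F^\times$ with conductor at most $e_El$ then gives $[\theta[l]:\theta[0]]\asymp_p p^{l}$, hence $C_{\mathcal F}[l]\asymp_p p^{\mathfrak c/2+l}$.

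On the geometric side, I would unfold $P_{m_1}$ against the additive character and use the Bruhat decomposition $\mathrm{GL}_2=B\sqcup B\omega N$: the Borel cell yields the diagonal term $\delta_{m_1=m_2}$ after matching the two unipotent integrals, and the big cell yields $\sum_{c>0}$ of a product of local orbital integrals of the $\varphi_\nu$ against $\omega n(\cdot)$. At $\infty$ this orbital integral is the classical Bessel transform, giving $2\pi i^\kappa J_{\kappa-1}(4\pi\sqrt{m_1m_2}/c)$; at $q\ne p$ it is trivial since the level there is $1$; and at $p$ the twisted orbital integral of $\varphi_p$ along the big Bruhat cell is, by definition, the generalised Kloosterman sum $G(m_1,m_2,\theta,c^{-2})$ of Lemma \ref{generalkloosterman}, whose support in the $c$-variable is precisely the congruence $c_l\mid c$ with $c_l\asymp p^{\mathfrak c/2+l}$ forced by the support of $\varphi_p$. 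Equating the two expansions and dividing through by the normalising constant then gives the displayed formula.

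The step I expect to be the main obstacle is the local harmonic analysis at $p$: constructing $\varphi_p$ from the Bushnell--Kutzko type data of $\theta$ so that it detects exactly the family $\pi_\theta[l]$; proving the non-vanishing and computing the value of the local new-vector weight uniformly over $\theta'\sim_l\theta$, which is what produces $C_{\mathcal F}[l]$; and evaluating the twisted orbital integral of $\varphi_p$ along the big cell, showing that it collapses to $G(m_1,m_2,\theta,c^{-2})$ with the modulus constraint $c_l\mid c$. All three require an explicit comparison of the compact-induction model of $\pi_{\theta'}$ with its Whittaker model, together with control of the relevant double cosets in $\mathrm{GL}_2(\mathbb Q_p)$ meeting the big Bruhat cell; once these local computations are carried out, the passage between the two sides of the trace formula is routine bookkeeping.
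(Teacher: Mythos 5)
First, note that the paper does not prove this statement at all: Theorem \ref{trace formula} is imported verbatim from \cite{Hu23} (Theorem 1.7), so the only internal ``proof'' is a citation, and the benchmark for your attempt is whether it would stand as a self-contained argument. Your strategy --- a Petersson/relative trace formula built from a Poincar\'e series with holomorphic weight-$\kappa$ data at $\infty$, spherical units away from $p$, and a type-theoretic test function $\varphi_p$ at $p$ selecting $\pi_\theta[l]$ --- is indeed the right kind of strategy and is the one underlying the cited reference. But as written it is a roadmap, not a proof: the three steps you yourself flag as ``the main obstacle'' (construction of $\varphi_p$ from the compact-induction data so that the spectral weight vanishes off $\pi_\theta[l]$ and is the \emph{same} constant on all of it; the identification of that constant with $C_{\mathcal F}[l]=p^{\mathfrak c}[\theta[l]:\theta[0]]D_{\mathcal F}\asymp_p p^{\mathfrak c/2+l}$; and the evaluation of the $p$-adic orbital integral on the big Bruhat cell) are precisely the entire content of the theorem. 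Nothing in the proposal actually establishes the two quantitative features that distinguish this refined formula from the classical Petersson formula, namely that the geometric moduli are supported on $c_l\mid c$ with $c_l\asymp p^{\mathfrak c/2+l}$ (strictly smaller than the level $p^{\mathfrak c}$), and that the local term equals the specific quantity $G(m_1,m_2,\theta,c^{-2})$ of Lemma \ref{generalkloosterman}.

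On the last point there is also a concrete mismatch you would have to resolve: you treat $G(m_1,m_2,\theta,c^{-2})$ as ``by definition'' the twisted orbital integral of $\varphi_p$, but in this paper $G$ is defined (Lemma \ref{generalkloosterman}) through an integral of the Whittaker new-vector function $W_{\pi,p}$ of a member of the family, together with a tame Kloosterman-type sum in $d$ and an explicit constant $a_\pi p^k C_{\mathcal F}[l_0]^{-1}(1-p^{-1})^{-1}\asymp p^k$. Proving that the orbital integral of your $\varphi_p$ collapses to exactly this expression --- uniformly in $\theta'\sim_l\theta$, independently of the representative $\pi\in\pi_\theta[l]$, and with the stated support condition --- requires the explicit comparison of the compact-induction model with the Whittaker model and a double-coset analysis that you defer entirely. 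Similarly, your character count giving $[\theta[l]:\theta[0]]\asymp_p p^l$ is plausible but asserted, not derived, and the factorization $p^{\mathfrak c}\cdot D_{\mathcal F}$ of the normalizing constant is posited rather than computed. So the proposal identifies the correct architecture but leaves a genuine gap: every statement that carries quantitative content in the theorem is left to ``routine bookkeeping'' that is in fact the hard local harmonic analysis carried out in \cite{Hu23}.
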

Let \( W_{\pi,p} \) be the Whittaker function of the new vector, normalized by \( W_{\pi,p}(1) = 1 \). By \cite[page 39]{Hu23}, we have the following result:

\begin{lemma}\label{generalkloosterman}
Let \( c = p^k d \) with \( (d, p) = 1 \). Then
\[
\begin{aligned}
&G\left( n, 1, \theta, \frac{1}{c^2} \right) = \frac{a_{\pi} p^k}{C_{\mathcal{F}}[l_0] (1 - p^{-1})} \\
& \times \sum_{y \in (\mathbb{Z}/d\mathbb{Z})^\times} e\left( \frac{\bar{p}^{2k} y + n \bar{y}}{d} \right) 
\int_{\mathfrak{o}_p^\times} W_{\pi,p} \left( a\left( \frac{1}{c^2} \right) \omega n\left( \frac{u}{p^k} \right) \right) e_p\left( -\frac{n u}{p^k} \right) du,
\end{aligned}
\]
where the constant satisfies \( a_{\pi} p^k C_{\mathcal{F}}[l_0]^{-1} (1 - p^{-1})^{-1} \asymp p^k \).
\end{lemma}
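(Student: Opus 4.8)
\textbf{Proof strategy for Lemma \ref{generalkloosterman}.}

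The plan is to unwind the definition of the generalized Kloosterman sum $G(n,1,\theta,c^{-2})$ that appears in the refined trace formula (Theorem \ref{trace formula}) and exhibit it, after a standard multiplicativity-in-$c$ decomposition, as the displayed product of a classical Kloosterman-type sum modulo $d$ with a $p$-adic local integral of the Whittaker function $W_{\pi,p}$. Recall from \cite{Hu23} that $G(m_1,m_2,\theta,c^{-2})$ arises as the coefficient extracted when one expands $\sum_{f\in\mathcal F_\theta[l]}\|f\|^{-2}\lambda_f(m_1)\overline{\lambda_f(m_2)}$ via the Petersson/relative-trace machinery: it is essentially a spectral average of the local matrix coefficient of $\pi_p$ against the Weyl element, integrated over the relevant orbit, and concretely it can be written as an integral over $\mathfrak o_p^\times$ (or a sum of such over residues) of $W_{\pi,p}$ evaluated at $a(c^{-2})\,\omega\, n(u/c)$ twisted by an additive character. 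First I would write $c=p^k d$ with $(d,p)=1$ and use the Chinese Remainder Theorem to factor the torus/unipotent integration defining $G$ into its $p$-part and its prime-to-$p$ part; at the prime-to-$p$ places $\pi_p$ is unramified in the relevant sense and the local computation collapses to the elementary exponential sum, producing the factor $\sum_{y\in(\mathbb Z/d\mathbb Z)^\times} e\big((\bar p^{2k}y+n\bar y)/d\big)$ — here the $\bar p^{2k}$ records the normalization $a(1/c^2)$ seen modulo $d$, and $\bar y$ is the inverse of $y$ mod $d$.

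The substantive part is the $p$-adic factor. After the CRT split, the contribution at $p$ is an integral of $W_{\pi,p}\big(a(1/c^2)\,\omega\, n(u/p^k)\big)$ against $e_p(-nu/p^k)$ over $u$ ranging in a set of representatives; using the left $N(\mathfrak o_p)$-invariance properties of the Whittaker function and the support of $W_{\pi,p}$ on the diagonal torus (via the newform theory for $\pi_p$, which is supercuspidal of level $\mathfrak c\ge 3$ here), one reduces the $u$-range to $\mathfrak o_p^\times$ and identifies the measure normalization, yielding $\int_{\mathfrak o_p^\times} W_{\pi,p}\big(a(1/c^2)\,\omega\, n(u/p^k)\big)\,e_p(-nu/p^k)\,du$. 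One then collects the explicit constants: the factor $(4\pi)^{\kappa-1}/(\kappa-2)!$ and $C_{\mathcal F}[l]$ from the trace-formula normalization cancel against the Petersson weights, $D_{\mathcal F}$ and $[\theta[l]:\theta[0]]$ enter through $C_{\mathcal F}[l]$, and what remains is packaged into $a_\pi$, producing the prefactor $a_\pi p^k\, C_{\mathcal F}[l_0]^{-1}(1-p^{-1})^{-1}$; the $(1-p^{-1})^{-1}$ is exactly $\zeta_F(1)=\mu^\times(\mathfrak o^\times)^{-1}\cdot\text{(vol factor)}$ bookkeeping from switching between $\mu$ and $\mu^\times$ on $\mathfrak o_p^\times$. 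Finally the size claim $a_\pi p^k C_{\mathcal F}[l_0]^{-1}(1-p^{-1})^{-1}\asymp p^k$ follows because $C_{\mathcal F}[l_0]\asymp_p p^{\mathfrak c/2+l_0}$ by Theorem \ref{trace formula} and $a_\pi$ is (by its definition in \cite{Hu23}) comparable to $p^{\mathfrak c/2+l_0}$, the two $\mathfrak c$- and $l_0$-dependent factors cancelling up to $p$-bounded constants.

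I expect the main obstacle to be the careful bookkeeping of normalizations — matching the measure conventions ($\mu$ versus $\mu^\times$, hence the $\zeta_F(1)=(1-p^{-1})^{-1}$), the new-vector normalization $W_{\pi,p}(1)=1$, and the precise constants $C_{\mathcal F}[l]$, $D_{\mathcal F}$, $a_\pi$ inherited from \cite[Sect.\ 5]{Hu23} — so that the scalar in front comes out exactly as stated rather than off by a $p$-power or a root of unity. A secondary technical point is justifying the reduction of the $u$-integral to $\mathfrak o_p^\times$: one must use that $a(1/c^2)\,\omega\, n(u/p^k)$, for $u\notin\mathfrak o_p^\times$ or for $k$ too small, either lies outside the support of the new-vector Whittaker function or contributes a vanishing additive-character integral, which is where the level $\ge 3$ hypothesis and the explicit Kirillov/Whittaker description of supercuspidal $\pi_p$ (via the compact-induction model introduced above) are used. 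Since the statement is quoted verbatim from \cite[p.\ 39]{Hu23}, the proof is essentially a transcription and re-derivation of that computation, and no new ideas beyond careful tracking of constants are required.
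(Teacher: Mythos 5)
You should first note that the paper contains no proof of this lemma at all: it is introduced with ``By \cite[page 39]{Hu23}, we have the following result,'' i.e.\ it is imported verbatim from Hu's paper, so there is no in-paper argument to compare your write-up against. Your outline does have the right overall shape for how the identity arises in \cite{Hu23} --- factor $c=p^kd$, split the geometric-side orbital term into a classical exponential sum modulo $d$ at the unramified places and a $p$-adic integral of $W_{\pi,p}$ against $e_p(-nu/p^k)$ at $p$, then track the normalizations $C_{\mathcal F}[l_0]$, $(1-p^{-1})^{-1}$, $a_\pi$ --- and you candidly flag that this is a transcription of Hu's computation.

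However, as a standalone proof there is a genuine gap: you never state the definition of $G(n,1,\theta,1/c^{2})$. In \cite{Hu23} this object is defined concretely (as the integral over the unipotent orbit of modulus $c$ of the global kernel built from the specific test function whose $p$-component is attached to the small family $\pi_\theta[l]$), and every substantive step in your sketch is only meaningful relative to that definition: the claim that the prime-to-$p$ part ``collapses'' to exactly $\sum_{y}e\bigl((\bar p^{2k}y+n\bar y)/d\bigr)$, the reduction of the $u$-range to $\mathfrak o_p^\times$, the appearance of the factors $C_{\mathcal F}[l_0]$ and $(1-p^{-1})^{-1}$, and the size claim, which requires $a_\pi\asymp_p p^{\mathfrak c/2+l_0}$, are all asserted rather than derived. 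Since none of these can be checked, or even set up, without the definition in hand, your proposal is a plausible roadmap for reproducing \cite{Hu23} but does not by itself establish the lemma; given that the paper's own ``proof'' is the citation, the honest course is either to cite \cite{Hu23} as the paper does or to actually carry out the computation from Hu's definition, which your text defers entirely to the bookkeeping you identify as the main obstacle.
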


\subsection{Local \texorpdfstring{$\epsilon$-factors}{epsilon-factors}}\label{epsilonfactors}
In this subsection, we aim to prove that $\epsilon (f\times g)$ appears in (\ref{Approxomate}) is invariant when $f \in \mathcal F_\theta [l]$. Since the 
 $\epsilon$-factor is determined by local factors, and only the local component of $f$ at $p$ varies, it is sufficient to focus on $\epsilon (\pi _{1,p}\times \pi _{2,p})$. Since all subsequent discussions are local, we assume $p\ne 2$ and ignore the subscript $p$.
 
 For each irreducible representation \(\pi\) of \(\mathrm{GL}_2(F)\), Godement and Jacquet introduced the \(L\)-function \(L(\pi)\), \(\gamma\)-factor \(\gamma(\pi)\), and \(\epsilon\)-factor \(\epsilon(\pi)\) in their foundational work on automorphic forms and \(L\)-functions \cite{Godement1972Zeta}. For two representations \(\pi_1\) and \(\pi_2\) of \(\mathrm{GL}_n(F)\), the Rankin-Selberg method provides a way to define the corresponding constants \(L(\pi_1 \times \pi_2)\), \(\gamma(\pi_1 \times \pi_2)\), and \(\epsilon(\pi_1 \times \pi_2)\) via integral representations \cite{JPSS81}. However, the explicit computation of \(\epsilon\)-factors via this approach remains notoriously challenging due to the complexity of evaluating these integrals in practice \cite{Bump1998Automorphic}.

The Local Langlands Correspondence (LLC) offers a more tractable approach to computing \(\epsilon\)-factors. For a non-trivial additive character \(\psi\) of \(F\) and a virtual representation \(\sigma\) of the Weil-Deligne group \(DW_F\), one associates a local \(\epsilon\)-factor \(\epsilon(\sigma, \psi) \in \mathbb{C}^\times\). Specifically:

- For an unramified character \(\chi\) of \(F^\times\), the \(\epsilon\)-factor is given by \(\epsilon(\chi, \psi) = \chi(\varpi)^{c(\psi)}\), where \(c(\psi)\) denotes the conductor of \(\psi\) and \(\varpi\) is a uniformizer of \(F\).

- For a ramified character \(\chi\) of \(F^\times\), the \(\epsilon\)-factor is defined as:
  \[
  \epsilon(\chi, \psi) = \frac{\int_{F^\times} \chi(x^{-1}) \psi(x) \, d^\times x}{\left| \int_{F^\times} \chi(x^{-1}) \psi(x) \, d^\times x \right|},
  \]
  where \(d^\times x\) denotes the multiplicative Haar measure on \(F^\times\).

\(\epsilon\)-factors satisfy the following fundamental properties, see for example, \cite{Tun83}:

\begin{description}
\item [Additivity under direct sums] \(\epsilon(\sigma_1 \oplus \sigma_2, \psi) = \epsilon(\sigma_1, \psi) \epsilon(\sigma_2, \psi)\).

\item [Functoriality for induction] \(\epsilon(\mathrm{Ind}_E^F \sigma, \psi) = \epsilon(\sigma, \psi_E)\) where \(\sigma\) is a degree-zero representation of \(DW_E\) and \(\psi_E(x) = \psi(T_{E/F}(x))\) with \(T_{E/F}\) denoting the trace map from \(E\) to \(F\).

\item [Duality relation] \(\epsilon(\sigma, \psi) \cdot \epsilon(\sigma^*, \psi) = \det \sigma(-1)\) where \(\sigma^*\) is the contragredient representation of \(\sigma\).

\item [Compatibility with tensor products over quadratic extensions] For a quadratic extension \(E/F\), let \(\sigma\) be an even-dimensional representation of \(W_E\) and \(\theta\) a character of \(E^\times\). Then 
   \[
   \epsilon\left(\sigma \otimes \mathrm{Ind}_E^F \theta, \psi\right) = \epsilon\left(\sigma|_E \otimes \theta, \psi_E\right) \cdot \omega_{E/F}^{\dim \sigma/2}(-1),
   \]
   where \(\omega_{E/F}\) denotes the quadratic character associated to \(E/F\) via local class field theory.

\item [Galois invariance:] For a Galois extension \(E/F\) and \(\tau \in \mathrm{Gal}(E/F)\), let \(\sigma^\tau\) denote the representation of \(W_E\) which is composition of  \(\sigma\) and \(\tau\). Then \(\epsilon(\sigma, \psi_E) = \epsilon(\sigma^\tau, \psi_E)\).
\end{description}

The LLC relates the \(\epsilon\)-factors of group representations to those of Galois representations via:
\[
\begin{aligned}
\epsilon(\pi, \psi) &= \epsilon(\sigma(\pi), \psi), \\
\epsilon(\pi_1 \times \pi_2, \psi) &= \epsilon(\sigma(\pi_1) \otimes \sigma(\pi_2), \psi),
\end{aligned}
\]
where \(\sigma(\pi)\) denotes representation of \(DW_F\) associated to \(\pi\) via the LLC.

\begin{theorem}[\cite{Tun83} Theorem 1.4]\label{epsilon}
 Let $E$ be a quadratic separable
 extension of $K$, and let $\theta$ be a character of $E^\times$  which is trivial on $F^\times$. Let $\Delta$ be an element of $E^\times$ with trace to $F$ equal to zero. Then $\epsilon(\theta, \psi_E) = c\theta (\Delta)$, where $c$ is a constant independent of $\theta$.
\end{theorem}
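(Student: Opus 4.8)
The plan is to show that the map $\theta \mapsto \epsilon(\theta,\psi_E)/\theta(\Delta)$ is independent of $\theta$ among characters of $E^\times$ trivial on $F^\times$; that is, its value does not change when $\theta$ is replaced by $\theta\theta_0$ for another such character $\theta_0$. The key structural input is that $\{\theta : \theta|_{F^\times}=1\}$ is, via the norm map $N_{E/F}$, acted on transitively enough that $\theta_0$ can be taken small; more precisely, since $\theta_0|_{F^\times}=1$ we have $\theta_0^\tau=\theta_0^{-1}$ by Proposition \ref{antiinvariant}'s argument (applied directly to $\theta_0$, without the $\Delta_\theta$-twist), where $\tau$ generates $\mathrm{Gal}(E/F)$. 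First I would recall the inductivity relation: if $\chi$ is the (possibly trivial) character of $F^\times$ with $\chi\circ N_{E/F}=\theta_0$ — which exists precisely when $\theta_0$ is $\tau$-invariant, and here $\theta_0^\tau = \theta_0^{-1}$, so this is the case exactly when $\theta_0^2=1$ — then one can compare $\epsilon(\theta\theta_0,\psi_E)$ with $\epsilon(\theta,\psi_E)$ via the twisting behavior of $\epsilon$-factors.

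The heart of the matter is the first-order / stationary-phase computation of the $\epsilon$-integral. Write, for ramified $\theta$ of level $\mathfrak c(\theta)=m$,
\[
\epsilon(\theta,\psi_E) = \frac{\displaystyle\int_{E^\times}\theta(x)^{-1}\psi_E(x)\,d^\times x}{\left|\displaystyle\int_{E^\times}\theta(x)^{-1}\psi_E(x)\,d^\times x\right|},
\]
and observe that the integral localizes to $x$ of a fixed valuation $v_0$ determined by $m$ and the level of $\psi_E$; on the coset $\varpi_E^{v_0}\mathfrak o_E^\times$ one performs the standard Gauss-sum manipulation, splitting $x=\varpi_E^{v_0}u(1+t)$ with $u$ running over representatives of $\mathfrak o_E^\times/U_E(\lceil m/2\rceil)$ and $t\in \varpi_E^{\lceil m/2\rceil}\mathfrak o_E/\varpi_E^m\mathfrak o_E$. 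The inner sum over $t$ vanishes unless $u$ satisfies a congruence pinning it down (up to $U_E(\lceil m/2\rceil)$) to a single critical point $u=u_\theta$, and then $\epsilon(\theta,\psi_E)$ becomes (a unit multiple of) $\theta(u_\theta)^{-1}$ times a quadratic Gauss sum that is itself a fixed fourth root of unity independent of $\theta$ when $m$ is fixed. The remaining task is to check that $\theta(u_\theta)$ and $\theta(\Delta)$ differ by a $\theta$-independent quantity: since $\Delta$ has trace zero, $\tau(\Delta)=-\Delta$, hence $\Delta\tau(\Delta) = -\Delta^2 \in F^\times$, so $\theta(\Delta)^2 = \theta(\Delta\tau(\Delta)) = \theta(-\Delta^2)$ depends only on $\theta|_{F^\times}=1$ and on $\theta(-1)$; and the critical point $u_\theta$ is, by construction, characterized by the same trace-zero condition $T_{E/F}$ that defines $\Delta$, so $u_\theta/\Delta \in U_E(1)F^\times$ up to bounded error — matching $\theta(u_\theta)$ to $\theta(\Delta)$ up to a constant.

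The main obstacle I anticipate is twofold. First, bookkeeping the precise level/valuation shifts: the level of $\psi_E$ is $e_E \mathfrak c(\psi) + d_{E/F}$ where $d_{E/F}$ is the different exponent, and getting the critical valuation $v_0$ and the critical-point equation exactly right (especially handling the parity of $m$ — whether $m$ is even, where the Gauss sum collapses to a single term, or odd, where one gets a genuine quadratic sum) requires care and is where the ramified-versus-inert and even-versus-odd case distinctions of the surrounding Definitions enter. Second, and more subtly, one must argue that the $\theta$-independent constant $c$ is genuinely the \emph{same} constant across all $\theta$ trivial on $F^\times$, not merely across those of a fixed level; this is handled by the twisting argument of the first paragraph, reducing a change of level to multiplication by a character $\theta_0$ of bounded level and tracking $\epsilon(\theta\theta_0,\psi_E)/\epsilon(\theta,\psi_E)$ via the standard stability property of $\epsilon$-factors under twists by characters of small level relative to $\mathfrak c(\theta)$ (for $\mathfrak c(\theta)$ large, this ratio is $\theta_0$ evaluated at the critical point, which by the trace-zero characterization equals $\theta_0(\Delta)$ up to the same constant). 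For the unramified or low-level $\theta$ the formula is checked directly. I would present the high-level reduction and the stationary-phase skeleton, citing \cite{Tun83} and \cite{BushnellHenniart2006} for the routine Gauss-sum evaluations.
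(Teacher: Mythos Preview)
The paper does not prove this statement: Theorem~\ref{epsilon} is quoted verbatim from Tunnell \cite[Theorem~1.4]{Tun83} and used as a black box in the proof of Proposition~\ref{epsilon_factor_independence}. There is therefore no proof in the paper to compare your proposal against.

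That said, your sketch is broadly in the right spirit --- a stationary-phase localization of the Gauss integral defining $\epsilon(\theta,\psi_E)$, followed by identifying the critical point with $\Delta$ modulo $F^\times U_E(\cdot)$ --- and this is indeed close to how Tunnell argues. But several steps are not pinned down: the claim that ``$u_\theta/\Delta \in U_E(1)F^\times$ up to bounded error'' is the entire content of the theorem and cannot be asserted without actually solving the critical-point congruence $\alpha_\theta/u_\theta \equiv$ (something trace-determined); your first paragraph about factoring $\theta_0=\chi\circ N_{E/F}$ only applies when $\theta_0^2=1$, which is far too restrictive to move between arbitrary $\theta$'s trivial on $F^\times$; and the stability-of-$\epsilon$ argument for varying the level needs $\mathfrak c(\theta_0)\le \mathfrak c(\theta)/2$, which does not by itself bridge all levels. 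If you want to include a proof here, it would be cleaner to simply cite \cite{Tun83} as the paper does, or to reproduce Tunnell's short argument directly rather than re-derive it.
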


\begin{proposition}[\cite{Tun83}]
Let \( E \) and \( L \) be two distinct quadratic extensions of \( F \), and let \( \theta \) be a character of \( E \), viewed as a character of the Weil group \( W_E \). Then we have
\(
\left( \mathrm{Ind}_E^F \theta \right) \big|_L = \mathrm{Ind}_{EL}^L \left( \theta \circ N_{EL/L} \right).
\)

For a quadratic extension \( E/F \), let \( \eta \) (resp. \( \theta \)) be a character of \( F^\times \) (resp. \( E^\times \)), viewed as a character of \( W_F \) (resp. \( W_E \)). Then we have
\(
\eta \otimes \mathrm{Ind}_E^F \theta = \mathrm{Ind}_E^F \left( \theta \otimes \left( \eta \circ N_{E/F} \right) \right).
\)
\end{proposition}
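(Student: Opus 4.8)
The plan is to read both displayed identities as purely formal statements about induced representations of the Weil group $W_F$, translated into characters of local fields through the reciprocity map; no information about $\epsilon$-factors or about the $\mathrm{SL}_2(\mathbb{C})$-part enters, since every representation in sight is a character (trivial on $\mathrm{SL}_2(\mathbb{C})$). Accordingly I would work with $W_F$ and its normal index-two subgroups $W_E$ and $W_L$, together with $W_{EL}=W_E\cap W_L$, relying on two standard facts: (a) the projection formula — for a subgroup $H\le G$, a character $\theta$ of $H$ and a character $\eta$ of $G$, pointwise multiplication by $\eta$ is a $G$-equivariant isomorphism $\mathrm{Ind}_H^G(\theta)\otimes\eta\xrightarrow{\ \sim\ }\mathrm{Ind}_H^G(\theta\cdot\eta|_H)$; and (b) restriction of a character along an inclusion $W_{E'}\hookrightarrow W_{E}$ of Weil groups corresponds, under reciprocity, to precomposition with the norm $N_{E'/E}$ — this is exactly the Lemma already recorded above, in the case $W_E\hookrightarrow W_F$.

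For the second identity I would apply (a) with $H=W_E$, $G=W_F$, taking $\eta$ and $\theta$ as given. One checks directly that $f\mapsto\eta f$ carries $\{f:W_F\to\mathbb{C}:f(hg)=\theta(h)f(g)\}$ into $\{f:W_F\to\mathbb{C}:f(hg)=\theta(h)\eta(h)f(g)\}$ and intertwines the ($\eta$-twisted) right-translation actions, hence is the asserted isomorphism; by (b), $\eta|_{W_E}$ corresponds to $\eta\circ N_{E/F}$ as a character of $E^\times$, which gives $\eta\otimes\mathrm{Ind}_E^F\theta\cong\mathrm{Ind}_E^F\big(\theta\otimes(\eta\circ N_{E/F})\big)$.

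For the first identity I would invoke Mackey's restriction formula
\[
\mathrm{Res}^{W_F}_{W_L}\,\mathrm{Ind}^{W_F}_{W_E}\theta\;\cong\;\bigoplus_{g\in W_L\backslash W_F/W_E}\mathrm{Ind}^{W_L}_{W_L\cap gW_Eg^{-1}}\big({}^{g}\theta\big),
\]
and the crucial point is that the double coset space is a single point. Indeed, since $E\ne L$ the subgroups $W_E,W_L$ are distinct normal subgroups of index $2$, so $W_EW_L$ is a subgroup strictly containing $W_E$; because $[W_F:W_E]=2$ this forces $W_EW_L=W_F$, i.e.\ $|W_L\backslash W_F/W_E|=1$ with representative $g=1$. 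Mackey's formula then collapses to the base-change identity $\mathrm{Res}_{W_L}\mathrm{Ind}_{W_E}^{W_F}\theta\cong\mathrm{Ind}_{W_{EL}}^{W_L}\big(\theta|_{W_{EL}}\big)$, using $W_L\cap W_E=W_{EL}$ and that with $g=1$ no conjugation occurs. Finally (b), applied to $W_{EL}\hookrightarrow W_E$, identifies $\theta|_{W_{EL}}$ with the character $\theta\circ N_{EL/E}$ of $(EL)^\times$, which gives $\big(\mathrm{Ind}_E^F\theta\big)\big|_L=\mathrm{Ind}_{EL}^L\big(\theta\circ N_{EL/E}\big)$.

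I do not expect a genuinely hard step. The two points that need attention are the double-coset count — which is precisely where the hypothesis $E\neq L$ is used, since for $E=L$ the restriction decomposes instead as a sum of two characters of $W_L$ — and the bookkeeping of the reciprocity normalization, so that restriction of characters becomes composition with the norm in the correct direction. Both are immediate from the material already assembled in this section.
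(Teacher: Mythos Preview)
Your argument is correct and is the standard one: the projection formula for the second identity and Mackey's restriction formula (with the single-double-coset observation coming from $E\neq L$) for the first. The paper itself gives no proof of this proposition; it simply records the statement with a citation to Tunnell, so there is nothing to compare against beyond noting that your route is the expected one.

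One point worth flagging: the paper's displayed formula reads $\theta\circ N_{EL/L}$, but since $\theta$ is a character of $E^\times$ this cannot be literally correct, and your version $\theta\circ N_{EL/E}$ is the right one. Indeed, when the paper applies this proposition in Case~3.a of the proof of Proposition~\ref{epsilon_factor_independence}, it writes $\Theta\circ N_{K/E}$ (with $K=EL$), confirming that $N_{EL/E}$ is what is intended. So you have silently corrected a typo in the statement.
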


\begin{proposition}[\(\epsilon\)-factor independence]\label{epsilon_factor_independence}
Suppose $p\ne 2$. Let \(\pi_1\) and \(\pi_2\) be two irreducible representations of \(\mathrm{GL}_2(F)\) with trivial central characters, whose conductors satisfy \(\mathfrak{c}(\pi_1), \mathfrak{c}(\pi_2) \ge 2\). Suppose these representations correspond to the pairs \((E, \eta)\) and \((L, \theta)\) via compact induction or parabolic induction. Then the \(\epsilon\)-factor \(\epsilon(\pi_1 \times \pi_2, \psi)\) depends only on the extensions \(E, L\) and the additive character \(\psi\), and is independent of the choices of \(\eta\) and \(\theta\).
\end{proposition}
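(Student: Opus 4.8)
The strategy is to pass to the Galois side via the local Langlands correspondence, reduce $\epsilon(\pi_1\times\pi_2,\psi)=\epsilon(\sigma(\pi_1)\otimes\sigma(\pi_2),\psi)$ to an \emph{abelian} $\epsilon$-factor over an auxiliary quadratic extension, and then apply Tunnell's formula (Theorem \ref{epsilon}); at that stage the dependence on $\eta$ and $\theta$ will cancel. Write $\sigma_i=\sigma(\pi_i)$. Since $\pi_i$ has trivial central character, $\det\sigma_i=1$ and $\sigma_i^*\cong\sigma_i$. First I would dispose of the non-supercuspidal cases: if at least one of the $\pi_i$, say $\pi_2$, is an irreducible principal series $\Ind(\chi\otimes\chi^{-1})$, then $\sigma_2=\chi\oplus\chi^{-1}$ and, using additivity together with the duality relation and $(\sigma_1\otimes\chi)^*\cong\sigma_1^*\otimes\chi^{-1}\cong\sigma_1\otimes\chi^{-1}$,
\[
\epsilon(\pi_1\times\pi_2,\psi)=\epsilon(\sigma_1\otimes\chi,\psi)\,\epsilon(\sigma_1\otimes\chi^{-1},\psi)=\det(\sigma_1\otimes\chi)(-1)=\chi^2(-1)=1,
\]
since $\det(\sigma_1\otimes\chi)=(\det\sigma_1)\chi^2=\chi^2$; the Steinberg-twist case, if it arises, is handled the same way. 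So we may assume $\pi_1,\pi_2$ are both supercuspidal, say $\sigma_1=\Ind_E^F H$ and $\sigma_2=\Ind_L^F\Theta$ with $H=\eta\Delta_\eta^{-1}$, $\Theta=\theta\Delta_\theta^{-1}$ and $E,L$ quadratic \emph{field} extensions of $F$. From $\eta|_{F^\times}=1$ and the definition of $\Delta_\eta$ one gets $H|_{F^\times}=\omega_{E/F}$, and likewise $\Theta|_{F^\times}=\omega_{L/F}$; Proposition \ref{antiinvariant} gives $H^\tau=H^{-1}$ and $\Theta^\tau=\Theta^{-1}$.

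In the case $E=L$, the projection formula together with $\Res_{W_E}\Ind_E^F\Theta=\Theta\oplus\Theta^{-1}$ gives $\sigma_1\otimes\sigma_2\cong\Ind_E^F(H\Theta)\oplus\Ind_E^F(H\Theta^{-1})$, and both inducing characters are trivial on $F^\times$ (their restriction is $\omega_{E/F}^2=1$). Then additivity, the inductivity relation $\epsilon(\Ind_E^F\mu,\psi)=\lambda_{E/F}(\psi)\,\epsilon(\mu,\psi_E)$ for a character $\mu$ (which follows from ``functoriality for induction'' after subtracting the trivial character $1_E$), and Tunnell's formula $\epsilon(\mu,\psi_E)=c\cdot\mu(\Delta)$ with $\Delta\in E^\times$ of trace zero and $c=c(E/F,\psi_E)$, yield
\[
\epsilon(\pi_1\times\pi_2,\psi)=\lambda_{E/F}(\psi)^2\,c(E/F,\psi_E)^2\,(H\Theta)(\Delta)\,(H\Theta^{-1})(\Delta)=\lambda_{E/F}(\psi)^2\,c(E/F,\psi_E)^2\,H(\Delta^2).
\]
Since $\Delta^2=-N_{E/F}(\Delta)\in F^\times$ and $H|_{F^\times}=\omega_{E/F}$ kills norms, $H(\Delta^2)=\omega_{E/F}(-1)$, which depends only on $E$; the dependence on $\eta$ and $\theta$ has disappeared.

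In the case $E\ne L$, Mackey's restriction formula and the projection formula combine to give $\sigma_1\otimes\sigma_2\cong\Ind_{EL}^F\Psi$, where $EL$ is the biquadratic compositum and $\Psi=(H\circ N_{EL/E})\cdot(\Theta\circ N_{EL/L})$ is a character of $(EL)^\times$. The crucial observation is that $\Psi$ is \emph{trivial on $M^\times$}, where $M$ is the third quadratic subextension of $EL/F$: for $m\in M^\times$ one computes $N_{EL/E}(m)=N_{EL/L}(m)=N_{M/F}(m)\in F^\times$, hence $\Psi(m)=\omega_{E/F}(N_{M/F}(m))\,\omega_{L/F}(N_{M/F}(m))=\omega_{EL/M}(m)^2=1$. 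With $\psi_M=\psi\circ T_{M/F}$ and $\psi_{EL}=\psi_M\circ T_{EL/M}$, the inductivity relation gives $\epsilon(\pi_1\times\pi_2,\psi)=\lambda_{EL/F}(\psi)\,\epsilon(\Psi,\psi_{EL})$, and Tunnell's formula applied to the quadratic extension $EL/M$ gives $\epsilon(\Psi,\psi_{EL})=c(EL/M,\psi_M)\,\Psi(\Delta)$ for any $\Delta\in(EL)^\times$ of trace zero over $M$. Finally, writing $\tau$ for the nontrivial element of $\Gal(EL/M)$ (which restricts nontrivially to both $E$ and $L$) and using $\tau(\Delta)=-\Delta$, one finds $N_{EL/L}(\Delta)=-N_{EL/E}(\Delta)$; this common value lies in $E^\times\cap L^\times=F^\times$, so with $\delta=N_{EL/E}(\Delta)\in F^\times$ and $\omega_{E/F}\omega_{L/F}=\omega_{M/F}$,
\[
\Psi(\Delta)=\omega_{E/F}(\delta)\,\omega_{L/F}(-\delta)=\omega_{L/F}(-1)\,\omega_{M/F}(\delta),
\]
which is independent of the choice of $\Delta$ and of $\eta,\theta$. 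In every case $\epsilon(\pi_1\times\pi_2,\psi)$ depends only on $E$, $L$, $\psi$.

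The main obstacle I anticipate is the structural step in the case $E\ne L$: recognizing that $\sigma_1\otimes\sigma_2$ is monomial, induced from the single character $\Psi$ of the biquadratic field $EL$, and — crucially — that $\Psi$ is trivial exactly on $M^\times$, which is what makes Tunnell's formula applicable; the subsequent evaluation of $\Psi(\Delta)$ is a short but delicate piece of Galois bookkeeping inside $EL$. The remaining work is routine: checking that the constants $\lambda_{E/F}(\psi)$, $\lambda_{EL/F}(\psi)$, $c(E/F,\psi_E)$, $c(EL/M,\psi_M)$ depend only on $E,L,\psi$, and verifying the formal identities above from the listed properties of $\epsilon$-factors together with Proposition \ref{antiinvariant} and the induction formulas preceding the statement.
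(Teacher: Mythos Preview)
Your proof is correct and follows essentially the same route as the paper: pass to the Galois side, reduce to abelian $\epsilon$-factors over a suitable quadratic extension, and apply Tunnell's formula (Theorem~\ref{epsilon}). The supercuspidal cases ($E=L$ and $E\ne L$) match the paper's Cases~3.b and~3.a almost step for step, including the key claim that the inducing character $\Psi$ is trivial on the third intermediate field $M$ (the paper's $T$); you phrase the reduction via Mackey/projection to $\Ind_{EL}^F\Psi$, while the paper uses the listed ``compatibility with tensor products'' property twice, but these are equivalent.

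One genuine simplification worth noting: your treatment of the non-supercuspidal case is cleaner than the paper's. Instead of splitting into the paper's Case~1 (both principal series) and Case~2 (one principal series, one supercuspidal) and computing explicitly, you observe that whenever $\sigma_2=\chi\oplus\chi^{-1}$ one has $(\sigma_1\otimes\chi)^*\cong\sigma_1\otimes\chi^{-1}$ by self-duality of $\sigma_1$, and then the duality relation immediately gives $\epsilon(\pi_1\times\pi_2,\psi)=\det(\sigma_1\otimes\chi)(-1)=\chi^2(-1)=1$, with no need to know anything further about $\sigma_1$. This collapses two cases of the paper into one line.
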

\begin{proof}
For simplicity, we omit the standard additive character $\psi$. Write \(\sigma_1 = \sigma(\pi_1)\) and \(\sigma_2 = \sigma(\pi_2)\). We prove the proposition case by case.

\textbf{Case 1} Suppose \(\pi_1 = \mathrm{Ind}\chi_1^{-1} \otimes \chi_1\) and \(\pi_2 = \mathrm{Ind}\chi_2^{-1} \otimes \chi_2\) are irreducible principal series representations. Then we have
\begin{equation*}
\epsilon(\pi_1 \times \pi_2) = \epsilon\left( (\chi_1^{-1} \oplus \chi_1) \otimes (\chi_2^{-1} \oplus \chi_2) \right) = \epsilon(\chi_1^{-1}\chi_2^{-1}) \cdot \epsilon(\chi_1^{-1}\chi_2) \cdot \epsilon(\chi_1\chi_2^{-1}) \cdot \epsilon(\chi_1\chi_2) = 1.
\end{equation*}
The last equality holds because \(\epsilon(\chi_1^{-1}\chi_2^{-1}) \cdot \epsilon(\chi_1\chi_2) = \chi_1(-1)\chi_2(-1)\) and \(\epsilon(\chi_1^{-1}\chi_2) \cdot \epsilon(\chi_1\chi_2^{-1}) = \chi_1^{-1}(-1)\chi_2(-1)\).

\textbf{Case 2} Assume only one of \(\pi_1\), \(\pi_2\) is a principal series representation. Let \(\sigma_1 = \chi^{-1} \oplus \chi\) and \(\sigma_2 = \mathrm{Ind}_L^F(\Theta)\) where \(\Theta = \theta \Delta_\theta^{-1}\). Then
\begin{equation*}
\epsilon(\pi_1 \times \pi_2) = \epsilon\left( (\chi^{-1} \oplus \chi) \otimes \mathrm{Ind}_L^F(\Theta) \right) = \epsilon\left( \chi^{-1}|_L \otimes \Theta, \psi_L \right) \cdot \epsilon\left( \chi|_L \otimes \Theta, \psi_L \right)
\end{equation*}
Let \(\tau\) be the nontrivial element of \(\mathrm{Gal}(L/F)\). Note that \(\chi|_L = \chi \circ N_{L/F}\) is \(\tau\)-invariant, and by Proposition \ref{antiinvariant}, \(\Theta^\tau = \Theta^{-1}\). Thus we have
\begin{equation*}
\epsilon\left( \chi|_L \otimes \Theta, \psi_L \right) = \epsilon\left( \chi|_L \otimes \Theta^{-1}, \psi_L \right).
\end{equation*}
It follows that \(\epsilon\left( \chi^{-1}|_L \otimes \Theta, \psi_L \right) \cdot \epsilon\left( \chi|_L \otimes \Theta^{-1}, \psi_L \right) = \chi(N_{L/F}(-1)) \Theta^{-1}(-1) = \Delta_\theta(-1)\), which is independent of \(\theta\) and \(\eta\).

\textbf{Case 3} Now consider the case where both \(\pi_1\) and \(\pi_2\) are supercuspidal. Let \(\sigma_1 = \mathrm{Ind}_E^F(\Theta)\) and \(\sigma_2 = \mathrm{Ind}_L^F(\chi)\) where \(\Theta = \theta \Delta_\theta^{-1}\) and \(\chi = \eta \Delta_\eta^{-1}\).

\textbf{Case 3.a} Suppose \(L \ne E\). Let \(K = EL\), so \(\mathrm{Gal}(K/F) = \{e, \tau_1, \tau_2, \tau_2\tau_1\}\) where \(\tau_1\) fixes \(E\) and \(\tau_2\) fixes \(L\). Let \(T\) be the intermediate field fixed by \(\tau_1\tau_2\). Then
\begin{equation*}
\begin{split}
&\epsilon(\pi_\theta \times \pi_\eta, \psi) = \epsilon\left( \sigma_1 \otimes \mathrm{Ind}_L^F(\chi), \psi \right) \\
&= \epsilon\left( \sigma_1|_L \otimes \chi, \psi_L \right) \omega_{L/F}(-1) = \epsilon\left( \Theta \circ N_{K/E} \otimes \chi \circ N_{K/L}, \psi_K \right) \omega_{L/F}(-1).
\end{split}
\end{equation*}
We claim that \(\Theta \circ N_{K/E} \otimes \chi \circ N_{K/L}\) is trivial on \(T^\times\). For \(t \in T^\times\), note that \(\tau_1|_T\) must be a nontrivial element of \(\mathrm{Gal}(T/F)\) (otherwise \(\tau_1\) acts trivially on both \(E\) and \(T\), implying \(K = ET\) which is a contradiction). By definition, for \(t \in T^\times\) we have \(N_{K/E}(t) = t \cdot \tau_1(t) = t \cdot (\tau_1|_T)(t) = N_{T/F}(t) \in F\). It suffices to verify that \(\omega_{E/F}(N_{T/F}(t)) = \omega_{L/F}(N_{T/F}(t))\). By \cite[Corollary 1.2]{Mil20}, \(N_{L/F}(T^\times) \cap N_{E/F}(E^\times) = N_{K/F}(K^\times) = N_{L/F}(T^\times) \cap N_{L/F}(L^\times)\), so the claim holds. By this claim and Theorem \ref{epsilon}, we have 
\[
\epsilon\left( \Theta \circ N_{K/E} \otimes \chi \circ N_{K/L}, \psi_K \right) = c \Theta(N_{K/E}(\Delta)) \chi(N_{K/L}(\Delta))
\]
where \(c\) is a constant independent of \(\eta, \theta\), and \(\Delta \in K^\times\) satisfies \(\tau_2\tau_1(\Delta) = -\Delta\). Let \(E = F(\sqrt{a})\) for \(a \in F\); we may choose \(\Delta = \sqrt{a}\), which is \(\tau_1\)-invariant and \(\tau_2\)-anti-invariant. Then \(\Theta(N_{K/E}(\Delta)) = \omega_{E/F}(a)\) and \(\chi(N_{K/L}(\Delta)) = \omega_{L/F}(-a)\), both independent of \(\Theta\) and \(\chi\). Thus the statement holds.

\textbf{Case 3.b} Suppose \(L = E\). Let \(\tau \in \mathrm{Gal}(E/F)\) be the nontrivial element. Then
\begin{equation*}
\begin{split}
\epsilon(\pi_1 \times \pi_2) &= \epsilon\left( \mathrm{Ind}_E^F(\chi) \otimes \mathrm{Ind}_E^F(\Theta) \right) = \epsilon\left( (\mathrm{Ind}_E^F(\chi))|_F \otimes \Theta, \psi_E \right) \\
&= \epsilon\left( (\chi \oplus \chi^\tau) \otimes \Theta, \psi_E \right) = \epsilon\left( \chi \otimes \Theta, \psi_E \right) \cdot \epsilon\left( \chi^\tau \otimes \Theta, \psi_E \right)
\end{split}
\end{equation*}
It is straightforward to check that \(\chi \otimes \Theta\) and \(\chi^\tau \otimes \Theta\) are trivial on \(F^\times\), so we may apply Theorem \ref{epsilon}. Let \(\Delta \in E^\times\) be an element with trace zero in \(F\); then \(\epsilon\left( \chi \otimes \Theta, \psi_E \right) \cdot \epsilon\left( \chi^\tau \otimes \Theta, \psi_E \right) = c \chi(\Delta) \Theta(\Delta) \chi(\tau(\Delta)) \Theta(\tau(\Delta)) = c^2 \Theta(\Delta^2) \chi(-\Delta^2) = c^2 \omega_{E/F}(-1)\), where \(c\) is independent of \(\eta\) and \(\theta\). Hence the statement holds.
\end{proof}

\section{Bounds for local integrals}\label{section3}

In this section, we summarize some basic results on Whittaker models following \cite{HMN23} and \cite{Hu17}. We then employ the \(p\)-adic stationary phase method to obtain a sharp bound for the local integral \(\mathcal{P}\). This section is purely local in nature. For conciseness, we shall omit the subscript \(p\) from our notation.

\subsection{Basics of Whittaker functions}

Suppose $p\ne 2$ and $\pi$ is an irreducible representation of $\mathrm{GL}_2(F)$ with trivial central character. Let $W_{\pi}$ be the Whittaker function of new form normalized by $W_{\pi}(1) =1$. We set $\mathfrak c = \mathfrak c(\pi)$ be the log-conductor.

\begin{lemma} For every positive integer $\mathfrak c$,
\begin{equation*}
\mathrm {GL}_2 = \bigcup _{0\le i\le \mathfrak c } B \left (
\begin{array}{cc}
    1 & 0 \\
    \varpi ^i  & 1
\end{array} \right ) K_0 (\mathfrak c).
\end{equation*}
Here $B$ is the Borel subgroup.
\end{lemma}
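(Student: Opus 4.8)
The plan is to prove this as a standard Bruhat-type decomposition statement for $\GL_2$ over a local field, relative to the congruence subgroup $K_0(\mathfrak{c})$. First I would recall the ordinary Bruhat decomposition $\GL_2(F) = B \sqcup B\omega N$, and then refine the big cell by understanding $B\omega N \cap (\text{something}) K_0(\mathfrak c)$. Concretely, I would write an arbitrary $g \in \GL_2(F)$ and reduce modulo $B$ on the left and $K_0(\mathfrak{c})$ on the right; since $\GL_2 = BK$ (the Iwasawa decomposition, $K = \GL_2(\mathfrak o)$), it suffices to decompose $K$ into the double cosets $B(\mathfrak o) \backslash K / K_0(\mathfrak c)$, or really just to show every element of $K$ lies in one of the $\mathfrak c + 1$ cosets $B\, n_-(\varpi^i) K_0(\mathfrak c)$ where $n_-(\varpi^i) = \left(\begin{smallmatrix} 1 & 0 \\ \varpi^i & 1\end{smallmatrix}\right)$.

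The key computational step is the following: given $k = \left(\begin{smallmatrix} a & b \\ c & d\end{smallmatrix}\right) \in K$, look at $\val(c)$ (with the convention $\val(0) = +\infty$, handled by the $i = \mathfrak c$ or rather the $B K_0(\mathfrak c)$ piece). If $\val(c) \ge \mathfrak c$ then $k \in K_0(\mathfrak c)$ already, so $k \in B K_0(\mathfrak c) = B\, n_-(1)\cdot(\text{unit lower-triangular adjustment})$ — more cleanly, $i = \mathfrak c$ gives $n_-(\varpi^{\mathfrak c}) \in K_0(\mathfrak c)$ so that coset is just $B K_0(\mathfrak c)$. If $i := \val(c) < \mathfrak c$, I would exhibit explicit $b_0 \in B(F)$ and $\kappa \in K_0(\mathfrak c)$ with $k = b_0\, n_-(\varpi^i)\, \kappa$; the natural choice is to clear out the $(2,1)$-entry partially: since $\det k \in \mathfrak o^\times$, either $a$ or $c$ is a unit, and after a left multiplication by a diagonal/upper-triangular matrix in $B$ one can normalize $k$ to have the form $\left(\begin{smallmatrix} * & * \\ \varpi^i & * \end{smallmatrix}\right)$ with controlled entries, then right-multiply by a suitable element of $K_0(\mathfrak c)$ (upper triangular times a small lower-triangular correction that stays in $K_0(\mathfrak c)$ because $i < \mathfrak c$) to land exactly on $n_-(\varpi^i)$. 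I expect this bookkeeping — tracking which entries are units and verifying the correction matrix genuinely lies in $K_0(\mathfrak c)$ — to be the only real content, and it is elementary.

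The main (mild) obstacle is just making the case $i < \mathfrak c$ clean: one must ensure the final right-factor has its lower-left entry in $\mathfrak p^{\mathfrak c}$, which forces the decomposition to "round up" $\val(c)$ only when it is already $\ge \mathfrak c$, hence the range $0 \le i \le \mathfrak c$ rather than all of $\mathbb{Z}_{\ge 0}$. I would also remark that the union need not be disjoint (indeed it generally is not), so only the covering statement is claimed, which simplifies things. Finally I would note this is exactly the coset decomposition underlying the support of the new-vector Whittaker function, so it suffices for the later application; a reference such as the standard treatment in \cite{Hu17} or \cite{Hu23} could be cited if one prefers to skip the elementary verification entirely.
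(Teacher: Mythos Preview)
The paper states this lemma without proof, treating it as a standard background fact. Your outline via the Iwasawa decomposition $\GL_2(F)=BK$ followed by sorting $k=\left(\begin{smallmatrix}a&b\\c&d\end{smallmatrix}\right)\in K$ according to $i=\min(\val(c),\mathfrak c)$ is correct and is the usual way to verify this; your suggestion to alternatively cite \cite{Hu17} or \cite{Hu23} is exactly in the spirit of how the paper handles it.
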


\begin{definition}\label{definitionW^k}
We define
\[
    W_{\pi}^{(k)}(x) =W_{\pi} \left (\left (
\begin{array}{cc}
    x & 0 \\
    0 & 1
\end{array}
\right)
\left (
\begin{array}{cc}
    1 & 0 \\
    \varpi^k & 1
\end{array}
\right)
\right ).
\]
\end{definition}

\begin{lemma}
    For $k\le \mathfrak c$, $W_{\pi} ^{(k)}(x)$ is $U(\mathfrak c-k)$-invariant, where we set $U(k) = 1+\mathfrak p^k$.
\end{lemma}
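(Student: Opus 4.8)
The plan is to reduce the asserted invariance to the one structural fact available about $W_\pi$: since $\pi$ has trivial central character and $\mathfrak{c}=\mathfrak{c}(\pi)$ is defined (Section~\ref{preliminaries}) as the smallest level for which $\pi$ contains a $K_0(\mathfrak{c})$-invariant vector, the new vector is genuinely $K_0(\mathfrak{c})$-fixed, so its Whittaker function $W_\pi$ is right $K_0(\mathfrak{c})$-invariant. Granting this, fix $0\le k\le\mathfrak c$ and $u\in U(\mathfrak c-k)$. Because $a(xu)=a(x)a(u)$, it is enough to exhibit $g\in K_0(\mathfrak c)$ with
\[
a(u)\left(\begin{array}{cc}1&0\\ \varpi^k&1\end{array}\right)=\left(\begin{array}{cc}1&0\\ \varpi^k&1\end{array}\right)g ;
\]
then $W_\pi^{(k)}(xu)=W_\pi\!\left(a(x)\left(\begin{smallmatrix}1&0\\ \varpi^k&1\end{smallmatrix}\right)g\right)=W_\pi\!\left(a(x)\left(\begin{smallmatrix}1&0\\ \varpi^k&1\end{smallmatrix}\right)\right)=W_\pi^{(k)}(x)$ by right $K_0(\mathfrak c)$-invariance.

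The element $g$ is produced by conjugation: I would set
\[
g=\left(\begin{array}{cc}1&0\\ \varpi^k&1\end{array}\right)^{-1}a(u)\left(\begin{array}{cc}1&0\\ \varpi^k&1\end{array}\right)=\left(\begin{array}{cc}u&0\\ \varpi^k(1-u)&1\end{array}\right),
\]
the last equality being a one-line matrix multiplication. Now check $g\in K_0(\mathfrak c)$: its entries $u,0,1$ lie in $\mathfrak o$ and $\det g=u\in\mathfrak o^\times$, so $g\in\mathrm{GL}_2(\mathfrak o)=K$; and the lower-left entry satisfies $\varpi^k(1-u)\in\mathfrak p^k\cdot\mathfrak p^{\mathfrak c-k}\subseteq\mathfrak p^{\mathfrak c}$ precisely because $u\in U(\mathfrak c-k)$ forces $1-u\in\mathfrak p^{\mathfrak c-k}$ when $\mathfrak c-k\ge 1$. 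In the boundary case $k=\mathfrak c$ one has $U(0)=\mathfrak o^\times$ and only needs $\varpi^{\mathfrak c}(1-u)\in\mathfrak p^{\mathfrak c}$, which holds for every $u\in\mathfrak o^\times$; the case $k=0$ is trivial since then $g=a(u)$. This completes the argument.

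There is no genuine obstacle; the only points requiring care are citing the right $K_0(\mathfrak c)$-invariance of $W_\pi$ correctly (which is exactly why the trivial central character hypothesis matters — otherwise the new vector would transform by the nebentypus under $K_0(\mathfrak c)$ and one would pick up a character value) and spelling out the boundary value $k=\mathfrak c$. I would probably record the computation in the clean form ``$\left(\begin{smallmatrix}1&0\\ \varpi^k&1\end{smallmatrix}\right)$ conjugates $a\bigl(U(\mathfrak c-k)\bigr)$ into $K_0(\mathfrak c)$,'' since this is the reusable statement and makes the $U(\mathfrak c-k)$-invariance an immediate corollary together with the left $N$- and central-character equivariances of $W_\pi$.
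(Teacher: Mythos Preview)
Your proof is correct and is essentially identical to the paper's: both verify the matrix identity
\[
a(u)\begin{pmatrix}1&0\\ \varpi^k&1\end{pmatrix}=\begin{pmatrix}1&0\\ \varpi^k&1\end{pmatrix}\begin{pmatrix}u&0\\ \varpi^k(1-u)&1\end{pmatrix}
\]
and observe that the rightmost matrix lies in $K_0(\mathfrak c)$ when $u\in U(\mathfrak c-k)$. Your write-up is in fact more careful than the paper's (you spell out the boundary cases and the role of the trivial central character), but there is no difference in method.
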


\begin{proof}
Let $1+u \in U(\mathfrak c-k)$, note that  
\[ 
\left (
\begin{array}{cc}
    x(1+u) & 0 \\
    0 & 1
\end{array}
\right)
\left (
\begin{array}{cc}
    1 & 0 \\
    \varpi^k & 1
\end{array}
\right)
=
\left (
\begin{array}{cc}
    x & 0 \\
    0 & 1
\end{array}
\right)
\left (
\begin{array}{cc}
    1 & 0 \\
    \varpi^k & 1
\end{array}
\right)
\left (
\begin{array}{cc}
    1+u & 0 \\
    -u\varpi^k & 1
\end{array}
\right),
\]
and $\left (\begin{array}{cc}
    1+u & 0 \\
    u\varpi ^k & 1
\end{array}
\right)\in K_1(\mathfrak p^{\mathfrak c})$. Thus the statement holds.
\end{proof}

\begin{lemma}[\cite{HMN23}, proof of Lemma 6.1] \label{Atkin}
Let \(i < \mathfrak{c}/2\); then we have
\begin{equation*}
W^{(i)}_\pi (x) = a_i W^{(\mathfrak{c} - i)}_\pi (- \varpi^{\mathfrak{c} - 2i}x) \psi(\varpi^{-i}x).
\end{equation*}
Here \(a_i = \pm 1\) is a constant.
\end{lemma}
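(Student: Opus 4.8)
The plan is to derive the identity from the \emph{Atkin--Lehner pseudo-eigenvalue} of $\pi$. Write $\mathfrak c=\mathfrak c(\pi)$ and let $v$ be the new vector, so that $W_\pi(g)=\ell(\pi(g)v)$ for the Whittaker functional $\ell$, with $W_\pi(1)=1$. Since $\pi$ has trivial central character, $v$ is fixed by all of $K_0(\mathfrak p^{\mathfrak c})$; moreover $\eta_{\mathfrak c}:=\left(\begin{smallmatrix}0&-1\\\varpi^{\mathfrak c}&0\end{smallmatrix}\right)$ normalizes $K_0(\mathfrak p^{\mathfrak c})$, so $\pi(\eta_{\mathfrak c})$ preserves the line $\mathbb C v$ and acts on it by a scalar $\lambda$; and since $\eta_{\mathfrak c}^2=z(-\varpi^{\mathfrak c})$ is central, triviality of the central character forces $\lambda^2=1$, hence $\lambda=\pm1$. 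In terms of $W_\pi$ these facts read: $W_\pi(g\eta_{\mathfrak c})=\lambda\,W_\pi(g)$, $W_\pi(gk)=W_\pi(g)$ for $k\in K_0(\mathfrak p^{\mathfrak c})$, and $W_\pi(gz)=W_\pi(g)$ for $z$ central. These three equivariances, together with the $N$-equivariance $W_\pi(n(u)g)=\psi(u)W_\pi(g)$, are the only inputs; everything else is $2\times 2$ matrix bookkeeping.

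First I would establish the reduction
\[
W^{(i)}_\pi(x)=\psi(\varpi^{-i}x)\,W_\pi\!\left(\begin{smallmatrix}0&-x\\\varpi^{2i}&\varpi^{i}\end{smallmatrix}\right),
\]
by inserting the factorization $\left(\begin{smallmatrix}1&0\\\varpi^{i}&1\end{smallmatrix}\right)=n(\varpi^{-i})\,z(\varpi^{-i})\left(\begin{smallmatrix}0&-1\\\varpi^{2i}&\varpi^{i}\end{smallmatrix}\right)$ into $W^{(i)}_\pi(x)=W_\pi\!\left(a(x)\left(\begin{smallmatrix}1&0\\\varpi^{i}&1\end{smallmatrix}\right)\right)$, commuting the unipotent $n(\varpi^{-i})$ to the left past $a(x)$ (it becomes $n(\varpi^{-i}x)$), commuting the central $z(\varpi^{-i})$ to the left past $a(x)$ and discarding it by triviality of the central character; the leftover $n(\varpi^{-i}x)$ produces $\psi(\varpi^{-i}x)$. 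Then I would exhibit the factorization
\[
\begin{pmatrix}0&-x\\\varpi^{2i}&\varpi^{i}\end{pmatrix}=a(-\varpi^{\mathfrak c-2i}x)\begin{pmatrix}1&0\\\varpi^{\mathfrak c-i}&1\end{pmatrix}\;z(\varpi^{2i-\mathfrak c})\begin{pmatrix}-1&0\\0&1\end{pmatrix}\eta_{\mathfrak c},
\]
which is checked by direct multiplication, and in which the three trailing factors are central, an element of $K_0(\mathfrak p^{\mathfrak c})$, and the Atkin--Lehner element respectively. Applying the three equivariances above — the central factor and the $K_0$-factor contribute $1$, the element $\eta_{\mathfrak c}$ contributes $\lambda$ — collapses the right-hand side to $\lambda\,W_\pi\!\left(a(-\varpi^{\mathfrak c-2i}x)\left(\begin{smallmatrix}1&0\\\varpi^{\mathfrak c-i}&1\end{smallmatrix}\right)\right)=\lambda\,W^{(\mathfrak c-i)}_\pi(-\varpi^{\mathfrak c-2i}x)$. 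Combining the two displays gives $W^{(i)}_\pi(x)=\lambda\,W^{(\mathfrak c-i)}_\pi(-\varpi^{\mathfrak c-2i}x)\,\psi(\varpi^{-i}x)$, i.e. the assertion with $a_i=\lambda\in\{\pm1\}$. The hypothesis $i<\mathfrak c/2$ serves only to ensure $\mathfrak c-i$ lies in the admissible range $[0,\mathfrak c]$ and is distinct from $i$, so that $W^{(\mathfrak c-i)}_\pi$ is a bona fide value along one of the double-coset representatives of the preceding lemma; the matrix identities hold regardless.

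The main obstacle is organizational rather than conceptual: one must check carefully that $\eta_{\mathfrak c}$ normalizes $K_0(\mathfrak p^{\mathfrak c})$ and that the new vector is genuinely $K_0(\mathfrak p^{\mathfrak c})$-fixed (not merely $K_1$-fixed) — this is exactly where triviality of the central character enters, since otherwise $\pi(\eta_{\mathfrak c})$ would be only a pseudo-eigenvector with eigenvalue of modulus one and $\left(\begin{smallmatrix}-1&0\\0&1\end{smallmatrix}\right)$ would act by a nebentypus value — and then carry out the two matrix factorizations without sign or exponent errors; the outputs $-\varpi^{\mathfrak c-2i}x$ and $\psi(\varpi^{-i}x)$ are precisely what those computations yield. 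A small point worth recording is that no normalization constant beyond $\lambda$ appears, because all manipulations are $\mathrm{GL}_2$-equivariances of the single fixed Whittaker function $W_\pi$; in particular $a_i$ is in fact independent of $i$ (it equals the Atkin--Lehner sign of $\pi$), although the statement only needs $a_i=\pm1$.
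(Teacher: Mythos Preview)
Your proof is correct and complete. The paper does not give its own proof of this lemma but simply cites \cite[proof of Lemma 6.1]{HMN23}; the argument there is precisely the Atkin--Lehner manipulation you carry out, so your approach coincides with the intended one. Your observation that $a_i=\lambda$ is actually the Atkin--Lehner eigenvalue of $\pi$, hence independent of $i$, is a nice sharpening of the stated conclusion.
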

Thus we only concern ourselves with the case where \(k \ge \mathfrak{c}/2\).

We now give the explicit expression of the new form. Let \(\pi\) be an irreducible representation of \(\mathrm{GL}_2(F)\) associated to the pair \((E, \theta)\) via compact induction or parabolic induction, where \(E\) is an étale quadratic algebra over \(F\) and \(\theta\) is a character of \(E^\times\) such that \(\theta|_{F^\times}\) is trivial.

If \(E\) is a field, then \(\pi\) is supercuspidal. We denote by \(\mathfrak{o}_E\) the ring of integers, \(\varpi_E\) the uniformizer, \(\nu_E\) the discrete valuation, and \(e_E\) the ramification index of \(E\).

If \(E\) is a split quadratic algebra of \(F\), then \(\pi\) is a principal series representation. We identify \(E\) with \(F \times F\). Define \(\mathfrak{o}_E = \mathfrak{o}_F \times \mathfrak{o}_F\), \(\mathfrak{o}_E^\times = \mathfrak{o}_F^\times \times \mathfrak{o}_F^\times\), and \(\nu_E(x) = i\) when \(x \in \varpi^i \mathfrak{o}_E^\times\). (In particular, \(\nu_E\) is not defined for all \(x \in E\) in this case.)

We set \(N: E^\times \to F^\times\) and \(T: E \to F\) to be the norm and trace operators, respectively. 

Recall from \cite[Lemma 5.5, Lemma 5.7]{HS20} the following result, which is a reformulation of \cite[Lemma 3.1]{Ass21} and holds actually for all dihedral supercuspidal representations.

\begin{lemma}\label{Whittaker function}
As a function in $x$, $W^{(k)}(x)$ is supported on $\mathfrak o^\times $ if $\mathfrak c\ge k>\mathfrak c/2$, and supported on $\mathfrak o$ if $k = \mathfrak c/2$. For $\mathfrak c \ge k\ge \mathfrak c/2$, $W^{(k)} (x)$ consists only of level $\mathfrak c -k$ components (in the sense of Mellin transform),  except when $k = \mathfrak c - 1$
where it consists of level $\le 1$ components. And if  $x\in \mathfrak o^\times$  we have:
\begin{equation*} 
W^{(k)}(x) = C_0^{-1} \int _{\nu _E (u )= -c(\theta ) -e_E +1} \theta ^{-1}(u) \psi \left (T(u)- \frac 1 x \varpi ^k N (u)  \right ) d^\times u. 
\end{equation*}
where 
\begin{equation*}
C_0 = \int _{\nu _E (u) = - c(\theta ) - e_E +1 } \theta ^{-1} (u) \psi _E (u) d^\times u.
\end{equation*}
The normalization by $C_0$ guarantees that $W^{(k)} (1) =1$. By the estimate of Guass sum we have 
\begin{equation*}
|C_0| = |  \mathfrak o_E /  \mathfrak p_E |^{-c(\theta )/2}\asymp p^{c(\theta )/e_E} .
\end{equation*}
\end{lemma}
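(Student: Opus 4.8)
The plan is to start from an explicit model of the new vector Whittaker function for a dihedral supercuspidal (or principal series) representation $\pi$ associated to $(E,\theta)$, and to compute the value $W^{(k)}(x)$ along the double coset $\bigl(\begin{smallmatrix} x & 0\\ 0&1\end{smallmatrix}\bigr)\bigl(\begin{smallmatrix} 1 & 0\\ \varpi^k & 1\end{smallmatrix}\bigr)$ directly. First I would recall the realization of $\pi$ via compact induction from the maximal compact-mod-center subgroup $E^\times K_E$ (where $K_E$ is a suitable congruence subgroup attached to $E$), and the corresponding explicit Kirillov/Whittaker model as in \cite[Lemma 3.1]{Ass21}, \cite[Lemma 5.5, Lemma 5.7]{HS20}: there the new vector is a specific function in the Kirillov model supported on $\mathfrak o$ or $\mathfrak o^\times$, and acting by $\bigl(\begin{smallmatrix} 1 & 0\\ \varpi^k & 1\end{smallmatrix}\bigr)$ — i.e.\ applying $\omega n(\varpi^{-k})\omega^{-1}$ up to diagonal/central elements — turns the Mellin-theoretic description into an integral over the norm-one-ish coset $\nu_E(u) = -c(\theta) - e_E + 1$ of $\theta^{-1}(u)$ against the additive character $\psi\bigl(T(u) - x^{-1}\varpi^k N(u)\bigr)$. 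The factor $\psi(T(u))$ comes from the Whittaker functional (integration of the Kirillov vector against $\psi$ on $N$), while the factor $\psi(-x^{-1}\varpi^k N(u))$ records the action of the lower-triangular unipotent translate together with the $a(x)$ and the scaling built into the level condition.

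Next I would establish the support statement. For $\mathfrak c/2 < k \le \mathfrak c$, invariance of $W^{(k)}$ under $U(\mathfrak c - k)$ (the preceding lemma) forces the Mellin components to have level at most $\mathfrak c - k < \mathfrak c/2$; combined with the known fact that the new vector in the Kirillov model is supported on $\varpi^{-\mathfrak c(\psi)}\mathfrak o^\times$ adjusted by the action, and with a valuation bookkeeping on the exponent $T(u) - x^{-1}\varpi^k N(u)$ (using $\nu_E(u) = -c(\theta)-e_E+1$, so $\nu_F(N(u)) = (-2c(\theta) - 2e_E + 2)/e_E$ and $\nu_E(T(u)) \ge \nu_E(u)$ with equality generically), one sees the integral vanishes unless $x \in \mathfrak o^\times$. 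For $k = \mathfrak c/2$ (only possible when $\mathfrak c$ is even, forcing $E/F$ unramified or $c(\theta)$ even) the weaker $U(\mathfrak c/2)$-invariance only bounds the level by $\mathfrak c/2$, which is consistent with support on all of $\mathfrak o$; the boundary case $k = \mathfrak c - 1$ is where the level can drop to $\le 1$ rather than being exactly $1$, and this must be read off from the same integral since then $\mathfrak c - k = 1$. The normalization constant $C_0$ is obtained by specializing $x = 1$: then $T(u) - \varpi^k N(u)$, but the relevant identity is that at $x=1$ the coset integral collapses to $\int \theta^{-1}(u)\psi_E(u)\,d^\times u$ after recognizing $\psi(T(u)) = \psi_E(u)$ and absorbing the $N(u)$ term into the character (using $\theta|_{F^\times} = 1$ and a change of variables). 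The Gauss-sum estimate $|C_0| = |\mathfrak o_E/\mathfrak p_E|^{-c(\theta)/2}$ then follows from the standard orthogonality/stationary-phase evaluation of a Gauss sum over the residue field extension, giving $|C_0| \asymp p^{c(\theta)/e_E}$ since $|\mathfrak o_E/\mathfrak p_E| = p^{f_E}$ and $f_E e_E = [E:F] = 2$.

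The main obstacle I anticipate is not any single computation but the careful translation between the three models — compact induction, Kirillov model, and the explicit double-coset evaluation — keeping track of all normalizations (the choice of $\varpi_E$, the measure $d^\times u$, the conductor of $\psi$, and the precise congruence subgroup $K_E$ in the inducing data) so that the constant $C_0$ comes out exactly as stated and the exponent in $\psi$ is precisely $T(u) - x^{-1}\varpi^k N(u)$ with no stray powers of $\varpi$. A secondary subtlety is handling the split case $E = F\times F$ uniformly: there $\nu_E$ is only partially defined, $N$ and $T$ degenerate to $(a,b)\mapsto ab$ and $(a,b)\mapsto a+b$, and one must check the coset $\nu_E(u) = -c(\theta)-e_E+1$ with $e_E = 1$ still makes sense and yields the principal-series new vector; this is exactly the content of the cited reformulation in \cite{HS20}, so I would lean on that rather than redo it. Once the model is pinned down, the support and level assertions are a finite valuation check and the $C_0$ estimate is a one-line Gauss sum bound.
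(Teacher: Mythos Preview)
Your proposal is aligned with the paper, but note that the paper does not actually prove this lemma: it is introduced with ``Recall from \cite[Lemma 5.5, Lemma 5.7]{HS20} the following result, which is a reformulation of \cite[Lemma 3.1]{Ass21}'' and no further argument is given. Your sketch goes considerably beyond what the paper does, outlining the passage between compact induction, the Kirillov model, and the double-coset Whittaker evaluation that underlies those cited results; you also correctly identify the normalization bookkeeping and the split case as the main hazards, and your resolution --- to lean on \cite{HS20} rather than redo it --- is exactly the paper's own stance. So the approaches coincide at the level of strategy, with your write-up simply being more explicit about what the references contain.

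One small point: your explanation of why specializing $x=1$ yields $C_0$ is slightly garbled. The correct reading is that for $k=\mathfrak c$ the lower-triangular matrix lies in $K_0(\mathfrak c)$, so $W^{(\mathfrak c)}(1)=W_\pi(1)=1$; in the integral formula at $k=\mathfrak c$ and $x=1$ the term $\varpi^{\mathfrak c} N(u)/x$ has non-negative valuation (since $\nu_F(N(u)) = -\mathfrak c$ up to the $e_E$ shift) and drops out of $\psi$, leaving exactly $\int \theta^{-1}(u)\psi(T(u))\,d^\times u = \int \theta^{-1}(u)\psi_E(u)\,d^\times u = C_0$. The ``absorbing $N(u)$ into the character via $\theta|_{F^\times}=1$'' maneuver you describe is not quite the mechanism.
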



\subsection{\texorpdfstring{The $p$-adic stationary phase method}{The p-adic stationary phase method}}
\begin{definition}
Let $D$ be a $\mathfrak p^k$-invariant subset of $F$ and $f: D\to \mathbb C$. Then $f$ is called $k$-additive or $\mathfrak p^k$-additive if for any $x\in D$, there exists a  parameter $a(x)$ such that for any $\delta \in \mathfrak p^k$ , we have 
\begin{equation*}
 f(x+\delta ) = f(x) \psi (a(x) \delta). 
\end{equation*}
And $a(x)$ is called additive parameter with respect to $\psi$. If $f $ is $k$-additive then we define
\[
    D(f,k) = \{ x\in D/\mathfrak p^k; \psi (a(x)\delta ) =1, \forall \delta \in \mathfrak p^k\} = \{ x\in D/\mathfrak p^k; a(x)\in \mathfrak p^{-k+c(\psi )}\}.
\]
\end{definition}

\begin{remark}
The $k$-additivity and $D(f,k)$ does not depend on the choice of additive character $\psi$ but the additive parameter $a(x)$ depends on $\psi$.
\end{remark}

If $f$ is $k$ -additive, then we can compute the integral
\begin{equation*}
\int _D f(x) dx = \frac 1 {p^k} \sum _{x\in D(f,k)} f(x).
\end{equation*}

Now we give some examples.
\begin{lemma}\label {character}
Let $\theta$ be a character. There exists $\alpha _\theta \in F$ such that $\nu (\alpha _\theta ) = -c(\theta ) +c(\psi)$ and for any   $u \in \mathfrak p^{\lceil c(\theta)/2 \rceil}$,
 \[
 \theta(1+u) = \psi (\alpha _ \theta u).
 \]
 Then for any $x\in \varpi ^k \mathfrak o^\times$ and $u\in \mathfrak p^{\lceil c(\theta)/2 \rceil+k}$ we can see
 \[
 \theta (x+u) = \theta (x) \theta (1+u/x) = \theta (x) \psi (\alpha _\theta u/x)
 \]
 Thus $\theta |_{\varpi ^k \mathfrak o^\times}$  is $\lceil c(\theta)/2 \rceil+k$-additive with $a(x) = \alpha _\theta /x$.
\end{lemma}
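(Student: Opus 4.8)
The plan is to reduce the lemma to its one substantive assertion — that there exists $\alpha_\theta \in F$ with $\nu(\alpha_\theta) = -c(\theta) + c(\psi)$ such that $\theta(1+u) = \psi(\alpha_\theta u)$ for every $u \in \mathfrak{p}^{\lceil c(\theta)/2\rceil}$ — because the remaining claims are then immediate: for $x \in \varpi^k\mathfrak{o}^\times$ and $u \in \mathfrak{p}^{\lceil c(\theta)/2\rceil + k}$ one has $u/x \in \mathfrak{p}^{\lceil c(\theta)/2\rceil}$, so $\theta(x+u) = \theta(x)\theta(1 + u/x) = \theta(x)\psi(\alpha_\theta u/x)$, which is exactly the displayed computation and exhibits $\theta|_{\varpi^k\mathfrak{o}^\times}$ as $(\lceil c(\theta)/2\rceil + k)$-additive with additive parameter $a(x) = \alpha_\theta/x$.

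To produce $\alpha_\theta$, set $c = c(\theta)$ and $m = \lceil c/2\rceil$, so $2m \ge c$. First I would verify that $u \mapsto \theta(1+u)$ is a homomorphism from $(\mathfrak{p}^m, +)$ to $\mathbb{C}^\times$: for $u, v \in \mathfrak{p}^m$ we have $uv \in \mathfrak{p}^{2m} \subseteq \mathfrak{p}^c$, hence $(1+u)(1+v)$ and $1 + (u+v)$ differ multiplicatively by an element of $U(c) = 1 + \mathfrak{p}^c$, on which $\theta$ is trivial by definition of the level, giving $\theta(1+u)\theta(1+v) = \theta(1+u+v)$. Since this homomorphism is also trivial on $\mathfrak{p}^c$, it factors through a character $\chi$ of the finite group $\mathfrak{p}^m/\mathfrak{p}^c$. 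Now I would appeal to the self-duality of $F$ under the pairing $(a,u)\mapsto \psi(au)$: the map $a \mapsto \bigl(u\mapsto\psi(au)\bigr)$ carries $\mathfrak{p}^{c(\psi)-c}$ onto the group of characters of $\mathfrak{p}^m$ trivial on $\mathfrak{p}^c$, with kernel $\mathfrak{p}^{c(\psi)-m}$, and since both $\mathfrak{p}^m/\mathfrak{p}^c$ and $\mathfrak{p}^{c(\psi)-c}/\mathfrak{p}^{c(\psi)-m}$ have order $q^{c-m}$ (with $q = |\mathfrak{o}/\mathfrak{p}|$), this map is onto the full dual of $\mathfrak{p}^m/\mathfrak{p}^c$. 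This yields $\alpha_\theta \in \mathfrak{p}^{c(\psi)-c}$, unique modulo $\mathfrak{p}^{c(\psi)-m}$, with $\theta(1+u) = \psi(\alpha_\theta u)$ on $\mathfrak{p}^m$.

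Finally I would pin down the valuation. Because $c = c(\theta)$ is the \emph{exact} level, $\theta$ is nontrivial on $U(c-1)$; since $m \le c - 1$ when $c \ge 2$, the identity $\theta(1+u) = \psi(\alpha_\theta u)$ already applies on $\mathfrak{p}^{c-1}$ and forces $u \mapsto \psi(\alpha_\theta u)$ to be nontrivial there, i.e. $\alpha_\theta \notin \mathfrak{p}^{c(\psi)-c+1}$; together with $\alpha_\theta \in \mathfrak{p}^{c(\psi)-c}$ this gives $\nu(\alpha_\theta) = c(\psi) - c = -c(\theta) + c(\psi)$. (For the degenerate cases $c(\theta) \le 1$ the character $\chi$ is trivial and one simply takes any $\alpha_\theta$ of the prescribed valuation.) The argument involves no real obstacle: its entire content is the self-duality of the additive group of $F$ with respect to $\psi$ plus the bookkeeping of conductors, and the only point needing a moment's care is deducing the \emph{exact} value of $\nu(\alpha_\theta)$ rather than merely a lower bound — this is precisely where the hypothesis that $c(\theta)$ is the exact level of $\theta$ enters.
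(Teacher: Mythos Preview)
Your proof is correct. The paper does not supply a separate proof for this lemma: it treats the existence of $\alpha_\theta$ as a standard fact about characters of local fields (the lemma is introduced with ``Now we give some examples''), and the derivation of the $k$-additivity statement is already written out inside the lemma's statement itself. Your argument via the additive self-duality of $F$ under $\psi$ is exactly the standard justification the paper is implicitly invoking, so there is nothing to compare.
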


\begin{remark}
However, one can see that $\theta$ is  not $k$-additive for any $k$.
\end{remark}

\begin{definition}
Let $D$ be a $\mathfrak p^k $-invariant subset. We say a function $f: D\to F$ is $k$-differentiable   if for any $x\in D$, there exists $f'(x) \in F$ such that for any $\delta  \in \mathfrak p^k$ we have 
\begin{equation*}
f(x+\delta ) \equiv  f(x) + \delta  f'(x) \mod {\mathfrak o},
\end{equation*}
or equivalently  we can write 
\begin{equation*}
f(x+\delta ) =  f(x) + \delta  f'(x) +O(\mathfrak o).
\end{equation*}
And $f'$ is called the differential of $f$.
\end{definition}

\begin{lemma}
Suppose $f$ is $k$-differentiable with differential $f'$. Then $\psi (f)$ is $k$-additive with $a(x) = f'(x)$.
\end{lemma}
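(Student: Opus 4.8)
The plan is to simply unwind the two definitions and bridge them with one elementary fact: the additive character $\psi$ fixed in Section \ref{preliminaries} is trivial on $\mathfrak{o}$ (its level is $0$). The statement that $\psi(f)$ is $k$-additive is a claim about how $\psi(f)$ transforms under translation by elements of $\mathfrak{p}^k$, while $k$-differentiability of $f$ furnishes exactly the first-order expansion of $f$ under such a translation, with an error lying in $\mathfrak{o}$. Since $\psi$ kills that error, exponentiating the expansion produces the desired transformation law.

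Concretely, I would fix $x \in D$ and $\delta \in \mathfrak{p}^k$. Because $D$ is $\mathfrak{p}^k$-invariant we have $x + \delta \in D$, so $\psi(f(x+\delta))$ is defined. By the definition of $k$-differentiability there is some $r = r(x,\delta) \in \mathfrak{o}$ with
\[
f(x+\delta) = f(x) + f'(x)\,\delta + r .
\]
Applying the multiplicativity of $\psi$ gives $\psi(f(x+\delta)) = \psi(f(x))\,\psi(f'(x)\,\delta)\,\psi(r)$, and since $r \in \mathfrak{o} = \mathfrak{p}^{0}$ and $\psi$ is trivial on $\mathfrak{o}$, the last factor equals $1$. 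Hence $\psi(f(x+\delta)) = \psi(f(x))\,\psi(f'(x)\,\delta)$, which is precisely the statement that $\psi(f)$ is $k$-additive with additive parameter $a(x) = f'(x)$.

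There is no real obstacle here; the one point to keep in mind is that the error term in the definition of $k$-differentiable must lie in the kernel of $\psi$, which is exactly why the argument uses the normalization $c(\psi) = 0$. (For a general additive character of level $c(\psi)$ one would instead read the definition of $k$-differentiable with the error term in $\mathfrak{p}^{c(\psi)}$, and the same computation goes through verbatim.)
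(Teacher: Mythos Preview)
Your proof is correct and is exactly the intended argument; the paper in fact omits the proof entirely, as the lemma follows immediately from the two definitions together with the normalization $\mathfrak{c}(\psi)=0$, which you have spelled out carefully.
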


Thus if $f$ is $k$-differentiable, we have 
\begin{equation*}
\int _D \psi (f(x)) dx=\sum _{x\in D(\psi (f),k) } \psi (f(x))/p^k.
\end{equation*}
Now we study a simple case which can be viewed as a kind of Guass sum. Define 
\[
   I(a,b) = \int _{\mathfrak o^\times } \psi (ax^2+bx) dx.  
\]
\begin{lemma} \label{4} Let $k = -\nu (a)\ge 1$.
If $k = 1$, then $I(a,b)$ is nonvanishing only if $\nu (b) \ge -1$.
If $k \ge 2$. Let $a = a_0\varpi ^{-k}$ where $a_0 \in \mathfrak o^\times$.  Denote  $\left ( \frac {a_0} {\mathfrak p} \right )$ by the Legendre symble on $\mathfrak o^\times /\mathfrak p$. Then the integral 
\[
I(a,b)= \begin{dcases}
      \frac {G(a)} { p^{k/2} }  \psi \left ( -\frac {b^2} {4a}\right ), \quad & \nu(a) = \nu(b)\\
     0. \quad & \text{otherwise}
\end{dcases}
\]
where
\[ 
G(a) = \begin{dcases}
    \frac {\sum _{x_0\in \mathfrak o/\mathfrak p} \psi \left ( \frac { x_0^2} \varpi \right )}{\sqrt p}\left ( \frac {a_0} {\mathfrak p}\right ), \quad & k \ \text{odd} \\
    1. \quad & k \ \text{even}
\end{dcases}
\]
\end{lemma}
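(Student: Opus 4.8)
The plan is to apply the $p$-adic stationary phase method developed above to the phase $f(x)=ax^2+bx$ on $D=\mathfrak o^\times$. Since $p\ne2$ we have $2\in\mathfrak o^\times$, and the polynomial identity $f(x+\delta)=f(x)+(2ax+b)\delta+a\delta^2$ shows that for $\delta\in\mathfrak p^m$ the term $a\delta^2\in\mathfrak p^{2m-k}$ lies in $\mathfrak o$ as soon as $m\ge\lceil k/2\rceil$. Setting $m=\lceil k/2\rceil$, the phase $f$ is $m$-differentiable with differential $f'(x)=2ax+b$, so $\psi(f)$ is $m$-additive with additive parameter $2ax+b$; recalling $c(\psi)=0$ for our fixed $\psi$, the stationary phase formula yields
\[
I(a,b)=\frac{1}{p^m}\sum_{x\in D(\psi(f),m)}\psi(f(x)),\qquad
D(\psi(f),m)=\bigl\{\,x\in\mathfrak o^\times/\mathfrak p^m:\ 2ax+b\in\mathfrak p^{-m}\,\bigr\}.
\]

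The first step is to locate the stationary set. For $x\in\mathfrak o^\times$ one has $\nu(2ax)=-k$, so if $\nu(b)<-k$ then $\nu(2ax+b)=\nu(b)<-k\le-m$ and the set is empty, whereas if $\nu(b)>-k$ then $\nu(2ax+b)=-k$, which lies in $\mathfrak p^{-m}$ only when $m\ge k$, impossible for $k\ge2$. This settles the ``otherwise'' branch for $k\ge2$; and for $k=1$ (so $m=1$) it shows $D(\psi(f),1)=\emptyset$ whenever $\nu(b)\le-2$, hence $I(a,b)=0$, which is the $k=1$ claim. It remains to treat $k\ge2$ with $\nu(a)=\nu(b)=-k$.

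In that case $f'$ has the unique zero $x_\ast=-b/(2a)$, which satisfies $\nu(x_\ast)=0$ and hence lies in $\mathfrak o^\times$. The stationary condition rearranges to $x\equiv x_\ast\pmod{\mathfrak p^{k-m}}$, and since $f$ is a polynomial, $f(x_\ast+t)=f(x_\ast)+at^2=-\tfrac{b^2}{4a}+at^2$. If $k$ is even, then $k-m=m$, so $D(\psi(f),m)=\{x_\ast\}$ and $I(a,b)=p^{-k/2}\psi(-b^2/4a)$, matching $G(a)=1$. If $k$ is odd, then $k-m=m-1$, so the stationary classes are represented by $x_\ast+j\varpi^{m-1}$, $j=0,\dots,p-1$, and since $a(j\varpi^{m-1})^2=a_0j^2\varpi^{-1}$,
\[
I(a,b)=\frac{1}{p^m}\,\psi\!\left(-\frac{b^2}{4a}\right)\sum_{j\bmod p}\psi\!\left(\frac{a_0j^2}{\varpi}\right).
\]
I would then evaluate the residual sum by the standard twisted quadratic Gauss sum identity $\sum_{j}\psi(a_0j^2/\varpi)=\left(\tfrac{a_0}{\mathfrak p}\right)\sum_{x_0}\psi(x_0^2/\varpi)=\sqrt p\,G(a)$, and conclude using $p^m=\sqrt p\,p^{k/2}$ that $I(a,b)=p^{-k/2}G(a)\psi(-b^2/4a)$.

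The valuation bookkeeping and the even/odd dichotomy are routine; the one substantive point is the odd case, where after localizing to the $p$ stationary classes one faces an honest one-variable quadratic exponential sum over the residue field. The work there is to recognize it, via the twisted-variable identity, as $\left(\tfrac{a_0}{\mathfrak p}\right)$ times the basic quadratic Gauss sum, and then to verify that the measure normalization $p^{-m}$ combines with the resulting $\sqrt p$ to produce exactly $p^{-k/2}$, so that both parities package uniformly into the constant $G(a)$.
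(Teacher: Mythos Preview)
Your proof is correct and follows essentially the same approach as the paper: apply the stationary phase with $m=\lceil k/2\rceil$, locate the stationary set via the valuation of $2ax+b$, and in the nonempty case expand around $x_\ast=-b/2a$ and reduce the odd-$k$ residual sum to a quadratic Gauss sum. Your case analysis for the vanishing (when $\nu(b)\ne\nu(a)$) is in fact slightly more explicit than the paper's, which just asserts $\nu(a)=\nu(b)$ without spelling out why the other cases die.
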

\begin{proof}
Set $f(x) = ax^2+bx$, then $f(x+\delta ) = f(x) +\delta (2a x +b) +a \delta ^2$. 
Thus we can see $f$ is $\lceil k/2 \rceil$-differentiable  with $f'(x) = 2ax +b$. Thus we have 
\begin{equation*}
I(a,b) = p^{-\lceil k/2 \rceil}\sum _{x\in \mathfrak o^\times / \mathfrak p^{\lceil k/2 \rceil}; 2ax +b \in \mathfrak p^{-\lceil k/2 \rceil}} \psi (ax^2 +bx)  . 
\end{equation*}
If $k=1$ and  $I(a,b)$ is nonvanishing, then $b\in \mathfrak p^{-1}$. If $k \ge 2$, then $\nu (a) = \nu (b)$, $x = -b/2a  \mod \mathfrak p^{\lfloor k/2 \rfloor}$ and 
\begin{equation*}
\begin{split}
I (a,b) &= p^{-\lceil k/2 \rceil} \sum _{x_0\in \mathfrak p^{\lfloor k/2 \rfloor }/\mathfrak p^{\lceil k/2 \rceil}} \psi \left (a\left (x_0-\frac b {2a}\right )^2+b\left (x_0-\frac b {2a}\right )\right )\\
& = \frac {\psi \left ( -\frac {b^2} {4a}\right )} {p^ {\lceil k/2 \rceil}} \sum_{x_0\in \mathfrak p^{\lfloor k/2 \rfloor }/\mathfrak p^{\lceil k/2 \rceil}} \psi (ax_0^2).
\end{split}
\end{equation*}
If $k$ is even, the statement is clear. If $k$ is odd, we set $k = 2n+1$ and $a= a_0\varpi ^{-2n-1}$. Then it is easy to check that the last summation equals  
\[
    \sum _{x_0\in \mathfrak o/\mathfrak p} \psi \left ( \frac {a _0 x_0^2} \varpi \right )= G(a)\sqrt p.
\] 
Thus the statement holds.
\end{proof}

Now we apply $p$-adic stationary phase to  whittaker function. 
\begin{corollary}\label{degenerate}
    For $\mathfrak c$ sufficiently large and $x\in \mathfrak o$ we have 
    \begin{equation*}
    W_\pi ^{\mathfrak c -1} (x) \ll 1.
    \end{equation*}
\end{corollary}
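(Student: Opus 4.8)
The plan is to apply Lemma \ref{Atkin} to reduce $W_\pi^{(\mathfrak{c}-1)}$ to $W_\pi^{(1)}$, and then invoke Lemma \ref{Whittaker function} together with the $p$-adic stationary phase machinery of Lemma \ref{4} to extract square-root-type cancellation from the resulting oscillatory integral. Concretely, since $1 < \mathfrak{c}/2$ for $\mathfrak{c}$ large, Lemma \ref{Atkin} gives $W_\pi^{(\mathfrak{c}-1)}(x) = a_1 W_\pi^{(1)}(-\varpi^{\mathfrak{c}-2}x)\psi(\varpi^{-1}x)$ with $|a_1| = 1$, so it suffices to bound $W_\pi^{(1)}(y) \ll 1$ for $y$ in the relevant coset. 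By the support statement in Lemma \ref{Whittaker function}, $W_\pi^{(\mathfrak{c}-1)}(x)$ is supported on $\mathfrak{o}^\times$ (as $\mathfrak{c}-1 > \mathfrak{c}/2$), and one reads off the corresponding constraint on $y = -\varpi^{\mathfrak{c}-2}x$; in particular the claim is only nonempty for $x \in \mathfrak{o}^\times$, and then $y \in \varpi^{\mathfrak{c}-2}\mathfrak{o}^\times$.

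First I would write out the integral formula of Lemma \ref{Whittaker function},
\[
W_\pi^{(k)}(x) = C_0^{-1}\int_{\nu_E(u) = -c(\theta)-e_E+1} \theta^{-1}(u)\,\psi\!\left(T(u) - \tfrac{1}{x}\varpi^k N(u)\right)d^\times u,
\]
with $k = \mathfrak{c}-1$ and $x \in \mathfrak{o}^\times$, and note $|C_0| \asymp p^{c(\theta)/e_E} = p^{\mathfrak{c}/e_E}$, which is the normalization we must beat. Parametrize $u = \varpi_E^{-c(\theta)-e_E+1} v$ with $v \in \mathfrak{o}_E^\times$, so the phase becomes $\psi$ of an explicit function of $v$; the exponent on $\varpi^{\mathfrak{c}-1}N(u)$ combined with $\nu_E(N(u)) = e_E\nu_E(u)/1$ (appropriately interpreted in the split and inert/ramified cases) produces a term of size comparable to the conductor, and the point is that this phase is $k'$-additive (in the sense of the definitions preceding Lemma \ref{character}) for some $k'$ close to $\mathfrak{c}/2$. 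Then the stationary-phase identity $\int_D f = p^{-k'}\sum_{D(f,k')} f$ collapses the integral to a sum over the stationary set, and counting that set — which is governed by a congruence of the type "derivative of the phase $\in \mathfrak{p}^{-k'+c(\psi)}$", i.e. essentially one or two solutions mod $\mathfrak{p}^{\lfloor k'/2\rfloor}$ exactly as in the proof of Lemma \ref{4} — gives a saving of $p^{k'/2} \asymp p^{\mathfrak{c}/(2e_E)}$ against the measure, on top of the $p^{-k'}$ from the measure normalization itself. Balancing these against $|C_0|^{-1} \asymp p^{-\mathfrak{c}/e_E}$ yields $W_\pi^{(\mathfrak{c}-1)}(x) \ll 1$. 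The three cases ($E$ inert, $E$ ramified, $E$ split) should be handled in parallel; the split case is closest to the classical Gauss-sum computation of Lemma \ref{4}, while the field cases require tracking $T$ and $N$ on $\mathfrak{o}_E$.

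The main obstacle I anticipate is verifying the additivity/differentiability of the phase $v \mapsto T(\varpi_E^{-c(\theta)-e_E+1}v) - x^{-1}\varpi^{\mathfrak{c}-1}N(\varpi_E^{-c(\theta)-e_E+1}v)$ at the correct level and correctly identifying the additive parameter, because $\theta^{-1}(u)$ itself is \emph{not} $k$-additive for any single $k$ (as the remark after Lemma \ref{character} warns), so one must fold $\theta^{-1}$ into the oscillatory factor via the parameter $\alpha_\theta$ of Lemma \ref{character} before the stationary-phase count applies. The borderline behavior at $k = \mathfrak{c}-1$ — which is exactly the exceptional case flagged in Lemma \ref{Whittaker function} ("level $\le 1$ components") — is what forces $\mathfrak{c}$ to be taken sufficiently large, and keeping the two-sided estimates uniform there is the delicate point; once the phase is put in the normal form $a v^2 + b v + (\text{const})$ on residue discs, Lemma \ref{4} applies essentially verbatim and the bound follows.
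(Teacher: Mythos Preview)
Your detour through Lemma \ref{Atkin} is both unnecessary and miscomputed: that lemma expresses $W^{(i)}$ in terms of $W^{(\mathfrak c-i)}$ for $i<\mathfrak c/2$, so inverting it sends $W^{(\mathfrak c-1)}(x)$ to $W^{(1)}$ evaluated at a point of valuation $2-\mathfrak c$ (not $\mathfrak c-2$ as you wrote), and in any case the integral formula of Lemma \ref{Whittaker function} is only stated for $k\ge\mathfrak c/2$, so you cannot feed $W^{(1)}$ into it. The paper simply works directly with the integral formula at $k=\mathfrak c-1$, which is what your second paragraph effectively does anyway.

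The more substantive divergence is in how the saving of $p^{\mathfrak c/2}$ against $C_0^{-1}$ is extracted. You propose a full two--variable stationary phase over $\mathfrak o_E^\times$, folding $\theta^{-1}$ into the oscillatory factor via $\alpha_\theta$; you correctly flag that $\theta^{-1}$ is not $k$-additive on all of $\mathfrak o_E^\times$, and this obstacle is not actually resolved in your sketch. Your accounting (``saving of $p^{k'/2}$'') also looks like a one--dimensional count, whereas the integral is over a two--dimensional domain. The paper sidesteps all of this with a single change of variables. In the split case, writing $\theta=\mu^{-1}\otimes\mu$ and substituting $y\mapsto yz$ turns the integrand into
\[
\psi\!\left(\frac{(y+1)z}{\varpi^{\mathfrak c/2}}-\frac{yz^2}{x\varpi}\right)\mu(y),
\]
so that the multiplicative character no longer involves $z$. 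The inner $z$-integral is then exactly of the form $I(a,b)$ in Lemma \ref{4} with $\nu(a)=-1$; the $k=1$ case of that lemma (which is just orthogonality of additive characters, not genuine stationary phase) forces $\nu(y+1)\ge\mathfrak c/2-1$. The remaining $y$-integration is then over a set of volume $\asymp p^{-\mathfrak c/2+1}$, and the trivial bound on the integrand, multiplied by $|C_0|^{-1}\asymp p^{\mathfrak c/2}$, gives $W^{(\mathfrak c-1)}_\pi(x)\ll 1$. No second stationary phase, no handling of $\alpha_\theta$, no quadratic normal form is needed: the whole saving comes from the \emph{vanishing} of the inner integral off a thin set, not from square-root cancellation at a critical point. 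The inert and ramified cases follow the same pattern.
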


\begin{proof}
   We prove the principle series case here and other cases are similar. Let $E= F\times F$ and $\theta = \mu^{-1} \otimes \mu$. By Lemma \ref{4} we have 
\begin{equation*}
\begin{split}
W^{(\mathfrak c-1)}(x) &= C_0^{-1}\int_{\mathfrak o^\times \times \mathfrak o^\times } \psi\left (\frac {y+z} {\varpi ^{\mathfrak c/2}}-\frac {y z} {x\varpi }   \right ) \mu (yz^{-1}) dy dz \\
&= C_0^{-1}\int_{\mathfrak o^\times \times \mathfrak o^\times } \psi\left (\frac {(y+1)z} {\varpi ^{\mathfrak c/2}}-\frac {y z^2} {x\varpi }   \right ) \mu (y) dz dy
\end{split}
\end{equation*}
Then the inner integral is nonvanishing only if $\nu (y+1)\ge \mathfrak c/2 -1$. Thus 
\begin{equation*}
W^{(\mathfrak c-1)} \ll p^{\mathfrak c/2} \mathrm {Vol} (\mathfrak p^{\mathfrak c/2-1}-1,dx)\asymp 1.
\end{equation*}
Thus the statement holds.
\end{proof}

\begin{definition} Let $E$ be an etale quadratic algebra over $F$ and $\theta$ be a character of $E^\times$. Let $k$ be a positive integer. If $E$ is a field, Let  $D\in E$  such that $E = F (\sqrt D)$, $\nu _E(\sqrt D)=e_E -1$,
Then  let $W[H, k,E,\theta ] (x)$ denote the integral :
\begin{equation*}
\int _{H} \psi \left (\frac {xf_E(y)} {\varpi ^k} \right ) \theta (\mathcal L_E(y)) dy
\end{equation*}
where 
\begin{equation*}
\begin{split}
    f_E(y)&=\begin{dcases}
        \frac {y^2} {4y-4}, & \quad E \text{\ split}\\
        \frac {y^2} {y^2-D}, & \quad E \text {\ nonsplit}
    \end{dcases}\\
    \mathcal L_E(y) &= \begin{cases}
        (y-1,1), &\quad E \text{\ split}\\
        y -\sqrt D, & \quad E \text {\ nonsplit} 
    \end{cases}\\
\end{split}
\end{equation*}
and $H \subseteq F$ is compact such that $f_E$ is well-defined.
\end{definition}

\begin{lemma}\label{another} Suppose $\pi$ is associated to $(E,\theta)$ via parabolic induction or compact induction. Let $\mathfrak c = \mathfrak c(\pi)$. If $E$ is not a ramified extension, 
for $\mathfrak c-1>k> \mathfrak c/2$ and $x \in \mathfrak o^\times$, we have 
\begin{equation*}
 W^{(k)}_\pi (x) = C_0^{-1} p^{k/2-\mathfrak c/2} G(x\varpi ^{\mathfrak c-k})W[\varpi ^{k-c(\theta )/e_E}\mathfrak o^\times, k, E,\theta ](x),
\end{equation*}
where $G(x)$ is defined in the Lemma \ref{4}.

\end{lemma}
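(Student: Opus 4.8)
The plan is to start from the integral representation of $W^{(k)}_\pi$ in Lemma~\ref{Whittaker function} and collapse its two–dimensional domain in $E^\times$ down to a one–dimensional integral over $F$, at which point the Gauss-sum Lemma~\ref{4} finishes the job. Since $E$ is unramified, $e_E=1$, so that shell is $\nu_E(u)=-c(\theta)$; moreover in this case $\mathfrak c:=\mathfrak c(\pi)=2c(\theta)$ (for split $E$ because $\pi=\mathrm{Ind}(\chi\otimes\chi^{-1})$, and for inert $E$ because $\Delta_\theta$ is unramified, so $\sigma(\pi)=\mathrm{Ind}_E^F(\theta\Delta_\theta^{-1})$ has conductor $2c(\theta)$). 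Thus $c(\theta)/e_E=\mathfrak c/2$ and $H:=\varpi^{\,k-c(\theta)/e_E}\mathfrak o^\times=\varpi^{\,k-\mathfrak c/2}\mathfrak o^\times$. Write $c=c(\theta)$ and $m=\mathfrak c-k=2c-k$; the hypothesis $\mathfrak c-1>k>\mathfrak c/2$ forces $2\le m<\mathfrak c/2$, which is exactly the range in which Lemma~\ref{4} produces a genuine Gauss sum. The guiding principle is that after introducing a ``ratio'' coordinate $y$, the integral in the other variable becomes $\int_{\mathfrak o^\times}\psi(av^2+bv)\,dv$ with $-\nu(a)=m$; Lemma~\ref{4} evaluates it to $G(a)p^{-m/2}\psi(-b^2/4a)$ on the locus $\nu(a)=\nu(b)$, that locus cuts the $y$-integral down to $H$ (up to a translation when $E$ splits), and $\psi(-b^2/4a)$ turns out to be $\psi(xf_E(y)/\varpi^k)$.

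For the split case I would write $u=(\varpi^{-c}v_1,\varpi^{-c}v_2)$ with $v_i\in\mathfrak o^\times$; the $\varpi$-powers cancel, leaving $\theta^{-1}(u)=\chi^{-1}(v_1)\chi(v_2)$, $T(u)=\varpi^{-c}(v_1+v_2)$, $N(u)=\varpi^{-2c}v_1v_2$. Substituting $v_2=v_1y$ turns the character into $\chi(y)$ and the phase into $av_1^2+bv_1$ with $a=-\varpi^{k-2c}y/x$, $b=\varpi^{-c}(1+y)$; applying Lemma~\ref{4} to the $v_1$-integral gives a contribution supported on $\nu(1+y)=k-\mathfrak c/2$, on which $G(a)$ is constant equal to $G(x\varpi^{\mathfrak c-k})$ and $-b^2/4a=x(1+y)^2/(4\varpi^ky)$. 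The translation $y'=y+1\in H$ then identifies $x(1+y)^2/(4\varpi^ky)=xf_E(y')/\varpi^k$ with $f_E(y')=(y')^2/(4y'-4)$, and $\chi(y)=\chi(y'-1)=\theta(\mathcal L_E(y'))$, so what remains is $W[H,k,E,\theta](x)$; collecting $C_0^{-1}$ and $p^{-m/2}=p^{k/2-\mathfrak c/2}$ gives the claim.

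For the inert case ($E=F(\sqrt D)$ with $D\in\mathfrak o^\times$ a nonsquare, $\sqrt D\in\mathfrak o_E^\times$, $\mathfrak o_E=\mathfrak o\oplus\mathfrak o\sqrt D$) I would write $u=\varpi^{-c}w$, $w\in\mathfrak o_E^\times$, and split the shell according to whether the $\sqrt D$-component of $w$ is a unit. On the main piece $w=w_2(y+\sqrt D)$ with $w_2\in\mathfrak o^\times$, $y\in\mathfrak o$; since $\theta$ is trivial on $F^\times$ and $\theta^{-1}=\theta^\tau$ (from the proof of Proposition~\ref{antiinvariant}), one gets $\theta^{-1}(u)=\theta(y-\sqrt D)=\theta(\mathcal L_E(y))$, while $T(u)=2\varpi^{-c}w_2y$ and $N(u)=\varpi^{-2c}w_2^2(y^2-D)$ with $y^2-D$ a unit; Lemma~\ref{4} applied to the $w_2$-integral gives a contribution supported on $\nu(y)=k-\mathfrak c/2$, i.e.\ $y\in H$, with phase $xf_E(y)/\varpi^k$, $f_E(y)=y^2/(y^2-D)$, and Gauss factor $G(x\varpi^{\mathfrak c-k})$ (constant on $H$, where $y^2-D\equiv -D$). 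On the exceptional piece $w=w_1(1+z\sqrt D)$ with $w_1\in\mathfrak o^\times$, $z\in\mathfrak p$, the $w_1$-integral has quadratic and linear coefficients of valuations $-m$ and $-\mathfrak c/2$, which differ because $k>\mathfrak c/2$; hence it vanishes by Lemma~\ref{4}, and only the main piece survives, giving the claim.

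The easy part is the analytic input: a single invocation of Lemma~\ref{4} per case and the recognition of the quadratic phase as $xf_E/\varpi^k$. The hard part will be the bookkeeping: checking that the ratio substitutions together with the conversions between multiplicative and additive Haar measure produce exactly the normalizing constants in the statement (the $C_0^{-1}$, the powers of $1-p^{-1}$, and the factor of $2$ coming from the trace in the inert case); verifying that $G(a)$ is genuinely constant on the support so that it pulls out of the outer integral, and pinning it to $G(x\varpi^{\mathfrak c-k})$ (in the inert case via $y^2-D\equiv -D$ on $H$); and confirming that the exceptional region in the inert case contributes nothing — the one place where the strict inequality $k>\mathfrak c/2$ is essential.
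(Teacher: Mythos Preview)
Your proposal is correct and follows essentially the same route as the paper. In the split case you make the same ratio substitution ($v_2=v_1y$) and apply Lemma~\ref{4} to the remaining quadratic integral, exactly as the paper does; in the inert case you decompose $\mathfrak o_E^\times$ according to whether the $\sqrt D$-coefficient is a unit, whereas the paper partitions by whether the constant coefficient is a unit, but the two decompositions lead to the same computation: your ``main'' piece reduces (after Lemma~\ref{4} forces $y\in\mathfrak p$) to the paper's $W_2$, and your ``exceptional'' piece is the paper's $W_1$ restricted to $z\in\mathfrak p$, which vanishes by the identical valuation mismatch. The bookkeeping caveats you flag (measure constants, the sign hidden in $G(x\varpi^{\mathfrak c-k})$ when $\mathfrak c-k$ is odd and $E$ is inert) are real but harmless for the paper's purposes, and the paper itself is somewhat loose about them.
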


\begin{proof} We prove this lemma case by case.

\textbf{Case 1. $E$ split.} Then $E= F\times F$ and $\theta = \mu^{-1} \otimes \mu$. By Lemma \ref{Whittaker function} and Lemma \ref{4}, for $\mathfrak c-1>k \ge  \mathfrak c/2$ and $\nu(x)=0$  we have:   
\begin{equation}\label{Esplit}
\begin{split}
W^{(k)}(x) &= C_0^{-1} \int_{\mathfrak o^\times \times \mathfrak o^\times } \psi\left (\frac {y+z} {\varpi ^{\mathfrak c/2}}-\frac {y z} {x\varpi ^{\mathfrak c-k}}   \right ) \mu (yz^{-1}) dy dz \\
&= C_0^{-1}\int_{\mathfrak o^\times \times \mathfrak o^\times } \psi\left (\frac {(y+1)z} {\varpi ^{\mathfrak c/2}}-\frac {y z^2} {x\varpi ^{\mathfrak c-k}}   \right ) \mu (y) dz dy\\
&= C_0^{-1} p^{k/2-\mathfrak c/2}\int _{ \nu (y+1) = k- \mathfrak c /2,\  y\in \mathfrak o^\times } G\left (-\frac {y} {x\varpi ^{\mathfrak c-k}}\right )\psi \left ( \frac {(y+1)^2x} {4 \varpi ^k y}\right ) \mu (y) dy\\
&= C_0^{-1} p^{k/2-\mathfrak c/2}G(x\varpi ^{\mathfrak c-k}) \int _{ \varpi ^{k- \mathfrak c /2}\mathfrak o^\times } \psi \left ( \frac {y^2x} {4 \varpi ^k (y-1)}\right ) \theta ^{-1} (y-1) dy.
\end{split}
\end{equation}
Here we explain the last equation. It is easy to see that $G(a) = G(1/a)$. Then $-y \equiv 1  \mod \mathfrak p$ and
\[G\left (-\frac {y} {x\varpi ^{\mathfrak c-k}}\right )=G\left (\frac {1} {x\varpi ^{\mathfrak c-k}}\right )=G({x\varpi ^{\mathfrak c-k}}).\] 

\textbf{Case 2. $E$ unramified.} Let $E=F(\sqrt {D})$, and  $\varpi_E =\varpi $. Then
\[ 
W^{(k)}(x)  = \frac 1 {C_0} \int _{\mathfrak o_E^\times } \theta ^{-1}(u) \psi \left (\varpi ^{-c(\theta )}T( u)- \frac  {\varpi  ^k} {x\varpi ^{2c (\theta) }} N (u)  \right ) d^\times u 
\]
We can see $\mathfrak o_E ^\times = \left \{ y+z\sqrt D ; y\in \mathfrak o^\times , z\in \mathfrak o \right \}\sqcup \left \{ y+z\sqrt D ; y\in \mathfrak p , z\in \mathfrak o^\times  \right \}  $. Thus we have
\begin{equation*}
\begin{split}
\int _{a\in \mathfrak o_E^\times } f(a) d^\times a &= \frac {p-1} {p+1} \left ( \int _{ \mathfrak o^\times \times \mathfrak o } f(y+z\sqrt {D} )dydz + \int _{\mathfrak p \times \mathfrak o^\times} f(y+z\sqrt D)dy dz  \right )\\
&= \frac {p-1} {p+1} \left ( \int _{ \mathfrak o^\times \times \mathfrak o } f(y(1+z\sqrt {D}) )dydz + \int _{\mathfrak p \times \mathfrak o^\times} f(z(y+\sqrt D))dy dz  \right )
\end{split}
\end{equation*}
Note that $\theta$ is trivial on $F$. We can see $W^{(k)} (x) = W_1 +W_2$ , where  
\begin{equation*}
\begin{split}
 W_1 &= C_0^{-1} \int_{ \mathfrak o^\times \times  \mathfrak o} \psi \left (   \frac {2y} {\varpi ^{c(\theta )}}-\frac {y^2(1-Dz^2) } {x\varpi ^{2c(\theta)-k}  }  \right )  \theta^{-1} (1+z \sqrt D) dy dz \\
 W_2 &=C_0^{-1}\int  _{\mathfrak p \times \mathfrak o^\times }\psi  \left ( \frac {2yz } {\varpi ^{c(\theta )}} -\frac {(y^2 -D)z^2} { x\varpi ^{2c(\theta)-k} } \right ) \theta^{-1} (y+\sqrt D) dydz
\end{split}
\end{equation*}
By Lemma \ref {4}, the inner integral of $y$ of $W_1$  is nonzero only if  $-c(\theta )\ge k-2c(\theta )$. So $W_1$ vanishes when   $k > c(\theta)$. And $W_2$ equals
\begin{equation*}
\begin{split}
 &C_0^{-1}p^{k/2-c(\theta)/2}\int  _{\varpi^{k-c(\theta )} \mathfrak o^\times  } G\left (-\frac {y^2-D} {x \varpi ^{2c(\theta ) - k }} \right )\psi  \left ( \frac {  y^2x } {\varpi ^k (y^2-D)} \right ) \theta^{-1} (y+\sqrt D) dy\\
&= -C_0^{-1}p^{k/2-c(\theta)}G(x\varpi ^{2c(\theta ) - k }) \int _{\varpi^{k-c(\theta )} \mathfrak o^\times } \psi \left ( \frac {  y^2x } {\varpi ^k (y^2-D)} \right  ) \theta ^{-1} (y+\sqrt D) dy. 
\end{split}
\end{equation*}
\end{proof}

\begin{lemma}\label{ramifiedextension}
If $E$ is a ramified extension, 
for $\mathfrak c-1>k> \mathfrak c/2$ and $x \in \mathfrak o^\times$, we have 
\begin{equation*}
 W^{(k)}_\pi (x) = C_0^{-1} p^{k/2-c(\theta )/2} G(xD^{c(\theta +1)} \varpi ^k)W[\varpi ^{k-c(\theta )/e_E}\mathfrak o^\times, k, E,\theta ](x).
\end{equation*}
\end{lemma}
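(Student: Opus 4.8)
The plan is to run the argument of Case~2 in the proof of Lemma~\ref{another}, now for a ramified $E$, where $e_E=2$. I would start from the integral representation of Lemma~\ref{Whittaker function},
\[
W^{(k)}_\pi(x) = C_0^{-1}\int_{\nu_E(u) = -c(\theta) - 1}\theta^{-1}(u)\,\psi\!\left(T(u) - \tfrac{1}{x}\varpi^k N(u)\right)d^\times u ,
\]
and take $\varpi_E=\sqrt D$ as uniformizer (so $\nu_E(\sqrt D)=e_E-1=1$), writing $u=\varpi_E^{-c(\theta)-1}w$ with $w$ ranging over $\mathfrak o_E^\times=\{\,y+z\sqrt D: y\in\mathfrak o^\times,\ z\in\mathfrak o\,\}$. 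Since this is a single coset there is, in contrast to the inert case, no splitting $W^{(k)}=W_1+W_2$. The structural input is that $\theta|_{F^\times}=1$ forces $c(\theta)$ to be even: were $c(\theta)$ odd, the group $1+\mathfrak p_F^{(c(\theta)-1)/2}$ would surject onto $\mathfrak p_E^{c(\theta)-1}/\mathfrak p_E^{c(\theta)}\cong\mathbb F_p$ and lie in $\ker\theta$, contradicting that $\theta$ has level $c(\theta)$. Hence $c(\theta)+1$ is odd and $\varpi_E^{-c(\theta)-1}=(\varpi_E^2)^{-c(\theta)/2}\varpi_E^{-1}$ carries a leftover $\varpi_E^{-1}$. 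Pulling the $F^\times$-scalar out of $\theta^{-1}$ (and absorbing the root of unity $\theta^{-1}(\varpi_E^{-1})$), and computing $T$ and $N$ via $T(\varpi_E^{-1}w)=2z$ and $N(\varpi_E)=-D$, one obtains an integral over $(y,z)\in\mathfrak o^\times\times\mathfrak o$ of $\psi$ of an explicit phase with $T(u)\propto z$, $N(u)\propto y^2-Dz^2$, times $\theta^{-1}(y+z\sqrt D)$; the powers of $D$ surfacing here are exactly those producing the claimed Gauss factor $G(xD^{c(\theta)+1}\varpi^k)$.

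Next I would perform the $p$-adic stationary-phase reduction. After the substitution $z\mapsto yz$ in the double integral the weight factors as $\theta^{-1}(y+z\sqrt D)=\theta^{-1}(1+z\sqrt D)$, which is independent of $y$, so (by Fubini) the inner $y$-integral becomes a Gauss integral $\int_{\mathfrak o^\times}\psi(ay^2+by)\,dy$ with $\nu(a)=k-c(\theta)-1$ and $\nu(b)=\nu(z)-c(\theta)/2$. Lemma~\ref{4} evaluates it: it forces $\nu(z)=k-c(\theta)/2-1$, i.e. $z\in\varpi^{\,k-c(\theta)/2-1}\mathfrak o^\times$, contributes --- together with the Jacobian of the final change of variables below --- the prefactor $p^{k/2-c(\theta)/2}$ and a Gauss symbol $G(a)$, and returns the quadratic completion $-b^2/4a$, which after the re-indexing $z=y'/\varpi$ (legitimate because $\nu_E(\sqrt D)$ is odd, so this shift of valuation puts $y'$ into the target range $\varpi^{\,k-c(\theta)/2}\mathfrak o^\times$) simplifies to $xf_E(y')/\varpi^k$ with $f_E(y')=y'^2/(y'^2-D)$. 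Rewriting $\theta^{-1}(1+z\sqrt D)$ as $\theta(y'-\sqrt D)=\theta(\mathcal L_E(y'))$ via $\theta^\tau=\theta^{-1}$ (Proposition~\ref{antiinvariant}), up to a constant root of unity, leaves precisely $W[\varpi^{k-c(\theta)/e_E}\mathfrak o^\times,k,E,\theta](x)$. Finally I would simplify $G(a)$, using $G(a)=G(1/a)$ and that $G$ depends only on $\nu(a)\bmod 2$ and on the square-class of the unit part of $a$ (hence is unchanged under multiplying $a$ by the square $D_0^2$, writing $D=\varpi D_0$), to identify $G(a)=G(xD^{c(\theta)+1}\varpi^k)$.

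The step I expect to be the main obstacle is keeping all the $\varpi$- and $D$-shifts straight. Because $\varpi_E=\sqrt D$ has odd valuation, each manipulation --- scaling by $\varpi_E^{-c(\theta)-1}$, the change of variables $z\mapsto yz$, the Gauss-sum localization from Lemma~\ref{4}, and the final re-indexing $z=y'/\varpi$ --- moves the relevant valuations and introduces powers of $D$ and of $D_0$, and one must check that the net effect is exactly the prefactor $p^{k/2-c(\theta)/2}$, the Gauss factor $G(xD^{c(\theta)+1}\varpi^k)$, and the support set $\varpi^{k-c(\theta)/e_E}\mathfrak o^\times$. A companion point requiring care is that the stationary-phase integral does not vanish identically: this uses the evenness of $c(\theta)$, since it is precisely the leftover $\varpi_E^{-1}$ that puts the variable $z$ into the linear coefficient $b$, thereby making the valuation-matching condition $\nu(a)=\nu(b)$ of Lemma~\ref{4} solvable inside the integration region (had $c(\theta)+1$ been even, $b$ would be constant in $z$, we would have $\nu(a)>\nu(b)$, and the integral would vanish --- consistent with the fact that this case does not occur).
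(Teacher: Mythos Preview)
Your approach is essentially identical to the paper's: parametrize $\mathfrak o_E^\times=\{y+z\sqrt D:y\in\mathfrak o^\times,\,z\in\mathfrak o\}$, substitute $z\mapsto yz$ to make $\theta^{-1}$ independent of $y$, evaluate the resulting $y$-integral by Lemma~\ref{4}, and then re-index to land in $W[\varpi^{k-c(\theta)/2}\mathfrak o^\times,k,E,\theta]$. One small correction in the final step: the re-indexing should be $y'=Dz$ rather than $y'=\varpi z$ (unless you choose $\varpi=D$), since only then does $-\dfrac{Dxz^2}{\varpi^k(1-Dz^2)}$ become $\dfrac{xy'^2}{\varpi^k(y'^2-D)}=\dfrac{xf_E(y')}{\varpi^k}$ --- this is precisely the $D$-versus-$\varpi$ bookkeeping you flagged as the main obstacle.
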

\begin{proof} In this case $c(\theta )$ is even and $\mathfrak c =c(\theta )+1$. And we set $\varpi _E = \sqrt D$. Then
\[ 
W^{(k)} (x) =C_0^{-1} \int _{\mathfrak  o_E^\times } \theta ^{-1} (u) \psi \left (\frac 1 {D^{c(\theta )/2}} T\left (\frac u {\sqrt D} \right )+\frac  {\varpi^i} {xD^{c(\theta )+1}} N(u)\right ) d^\times u
\]
Then $\mathfrak o_E^\times = \{ y+z \sqrt D ; y\in \mathfrak o^\times , z\in \mathfrak o\}$, and we have 
\begin{equation}\label{ramified}
\begin{split}
&W^{(i)}(x) = C_0^{-1}\int _{ \mathfrak o^\times  \times \mathfrak o} \psi \left (\frac {\varpi ^k(y^2-Dz^2)} {xD^{c(\theta)+1}} +  \frac {2z} {D^{c(\theta)/2}}\right )\theta^{-1} (y+z\sqrt D)dydz\\
&= C_0^{-1} \int _{\mathfrak o^\times \times  \mathfrak o} \psi \left (\frac {\varpi ^k(1-Dz^2)y^2} {xD^{c(\theta )+1}} +  \frac {2yz} {D^{c(\theta)/2}}\right )\theta^{-1} (1+z\sqrt D)dydz\\
&= C_0^{-1}p^{k/2-c(\theta )/2 -1/2 } \times \\
& \quad \int _{ \varpi ^{k-c(\theta )/2-1} \mathfrak o^\times  }G\left (\frac {\varpi ^k(1-Dz^2)} {xD^{c(\theta )+1}} \right ) \psi \left (-\frac {Dxz^2 } {\varpi ^k(1-Dz^2)}\right )\theta^{-1} (1+z\sqrt D)dz\\
&= C_0^{-1}p^{k/2-c(\theta )/2 -1/2 }G(xD^{c(\theta )+1} \varpi ^k)\int _{ \mathfrak p^{k-c(\theta )/2} \mathfrak o^\times  } \psi \left (\frac {xz^2 } {\varpi ^k(z^2-D)}\right )\theta^{-1} (z+\sqrt D)dz.\\
\end{split}
\end{equation}
\end{proof}

\begin{lemma}\label{split} 
Let $\pi , E, \theta ,\mathfrak c$ be same as Lemma \ref{another}.
If $E$ is split and $x\in \mathfrak o^\times $, then 
\begin{equation*}
W^{\mathfrak c/2} (x)= C_0^{-1}\left ( \frac {-1} {\mathfrak p}\right )G(x\varpi ^{\mathfrak c-k})W \left [\mathfrak o^\times \backslash (1+\mathfrak p), \mathfrak c/2 , E, \theta \times \left ( \frac {\cdot} {\mathfrak p} \right ) \right ].
\end{equation*}
\end{lemma}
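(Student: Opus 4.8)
The plan is to rerun the split-case computation from the proof of Lemma~\ref{another} at the boundary value $k=\mathfrak c/2$, keeping careful track of the single place where the hypothesis $k>\mathfrak c/2$ was actually used there. Since $E$ is split we write $E=F\times F$, $\theta=\mu^{-1}\otimes\mu$, $c(\theta)=\mathfrak c/2$ and $e_E=1$; note that $\mathfrak c$ is even with $\mathfrak c\ge 4$, so the relevant Gauss-sum parameter $\mathfrak c-k=\mathfrak c/2$ is $\ge 2$ in what follows.

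First I would unpack Lemma~\ref{Whittaker function} in the split case, which for $x\in\mathfrak o^\times$ gives
\[
W^{(\mathfrak c/2)}(x)=C_0^{-1}\int_{\mathfrak o^\times\times\mathfrak o^\times}\psi\!\left(\frac{y+z}{\varpi^{\mathfrak c/2}}-\frac{yz}{x\varpi^{\mathfrak c/2}}\right)\mu(yz^{-1})\,dy\,dz ,
\]
the point being that at $k=\mathfrak c/2$ the two exponential denominators coincide. After the substitution $y\mapsto yz$, the inner $z$-integral becomes the Gauss-type integral $I(a,b)$ of Lemma~\ref{4} with $a=-y/(x\varpi^{\mathfrak c/2})$ and $b=(y+1)/\varpi^{\mathfrak c/2}$, so $-\nu(a)=\mathfrak c/2\ge 2$. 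Lemma~\ref{4} then forces $\nu(b)=\nu(a)$, i.e.\ $\nu(y+1)=0$; \emph{this is precisely the change from the regime $k>\mathfrak c/2$, where the analogous condition was $\nu(y+1)=k-\mathfrak c/2\ge 1$.} Hence the $y$-domain is $\mathfrak o^\times\setminus(-1+\mathfrak p)$, the produced phase is $\psi(-b^2/4a)=\psi\!\left((y+1)^2x/(4\varpi^{\mathfrak c/2}y)\right)$, and the Gauss factor is $p^{-\mathfrak c/4}\,G(-y/(x\varpi^{\mathfrak c/2}))$. Because $-y$ is now an arbitrary unit rather than $\equiv 1\pmod{\mathfrak p}$, this Gauss constant carries an extra Legendre symbol, $G(-y/(x\varpi^{\mathfrak c/2}))=\left(\frac{-y}{\mathfrak p}\right)G(x\varpi^{\mathfrak c/2})$ (when $\mathfrak c/2$ is odd; when it is even $G\equiv 1$ and the Legendre factor and the sign $\left(\frac{-1}{\mathfrak p}\right)$ are to be read as absent). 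I would then split $\left(\frac{-y}{\mathfrak p}\right)=\left(\frac{-1}{\mathfrak p}\right)\left(\frac{y}{\mathfrak p}\right)$, pull $\left(\frac{-1}{\mathfrak p}\right)$ and $G(x\varpi^{\mathfrak c/2})$ out of the $y$-integral, and finally perform the shift $y\mapsto y-1$: the domain becomes $\mathfrak o^\times\setminus(1+\mathfrak p)$, the phase becomes $\psi(xf_E(y)/\varpi^{\mathfrak c/2})$ with $f_E(y)=y^2/(4y-4)$, and $\mu\cdot\left(\frac{\cdot}{\mathfrak p}\right)$ evaluated along $\mathcal L_E(y)=(y-1,1)$ becomes $\left(\theta\times\left(\frac{\cdot}{\mathfrak p}\right)\right)(\mathcal L_E(y))$. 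What remains is exactly $W\!\left[\mathfrak o^\times\setminus(1+\mathfrak p),\,\mathfrak c/2,\,E,\,\theta\times\left(\frac{\cdot}{\mathfrak p}\right)\right](x)$, which yields the claimed identity.

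The hard part will be the Gauss-sum bookkeeping in the step just described: one must verify that the unit part of $a=-y/(x\varpi^{\mathfrak c/2})$ is $-y$ up to the square class already carried by $x$, that the residual constant is genuinely $G(x\varpi^{\mathfrak c/2})$ (using $G(a)=G(1/a)$ and the fact that $G$ depends only on $\nu(a)$ and the square class of its unit part), so that it may legitimately be extracted from the $y$-integral, and that the $\mathfrak c/2$-parity dichotomy is correctly matched with the stated formula. The remaining ingredients—the change of variables $y\mapsto yz$, the shift $y\mapsto y-1$, and the identification of the resulting integrand with the defining data of $W[\,\cdot\,]$—are a verbatim repeat of the split case of Lemma~\ref{another}, together with the elementary observation that $\{\,y\in\mathfrak o^\times:\nu(y+1)=0\,\}$ maps under $y\mapsto y-1$ onto $\mathfrak o^\times\setminus(1+\mathfrak p)$.
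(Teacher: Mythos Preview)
Your approach is correct and is essentially identical to the paper's own proof: both rerun the split-case computation of Lemma~\ref{another} (equation~(\ref{Esplit})) at $k=\mathfrak c/2$, observe that the condition $\nu(y+1)=k-\mathfrak c/2$ degenerates to $\nu(y+1)=0$ so that the Gauss factor $G(-y/(x\varpi^{\mathfrak c/2}))$ picks up the extra Legendre symbol $\left(\frac{-y}{\mathfrak p}\right)=\left(\frac{-1}{\mathfrak p}\right)\left(\frac{y}{\mathfrak p}\right)$, and then shift $y\mapsto y-1$ to land on $\mathfrak o^\times\setminus(1+\mathfrak p)$ with the $\left(\frac{y-1}{\mathfrak p}\right)$ absorbed into the twisted character. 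Your parity caveat (that when $\mathfrak c/2$ is even the Gauss factor is identically $1$ and no Legendre symbol or twist appears) is a point the paper glosses over.
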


\begin{proof}
 Recall the second to last line of (\ref{Esplit}), we get that $y\in \mathfrak o^\times \backslash (\mathfrak p-1)$. Then $y+1 \in \mathfrak o^\times \backslash (\mathfrak p+1)$ and 
\[
G\left (-\frac {y} {x\varpi ^{\mathfrak c-k}}\right )=\left ( \frac {-1} {\mathfrak p}\right )\left (\frac {y} {\mathfrak p} \right ) G(x\varpi ^{\mathfrak c-k}).
\]
Thus the statement holds.
\end{proof}

\begin{lemma}\label{unramifedfieldextension}
Let $\pi , E, \theta ,\mathfrak c$ be same as in Lemma \ref{another}.
If $E$ is nonsplit, $\mathfrak c$ is even and $x\in \mathfrak o^\times $, then 
\begin{equation*}
\begin{split}
&W^{\mathfrak c/2} (x)= C_0^{-1} G({x\varpi ^{2c(\theta)-k}  })\times \\
&\left  (W \left [\mathfrak o^\times , \mathfrak c/2 , E, \theta \times \left ( \frac {N(\cdot)} {\mathfrak p} \right ) \right ]+ \sum _{i\ge 1} p^{-2i}  W[\varpi ^{-i} \mathfrak o^\times  , c(\theta ) , E,\theta ]\right ).
\end{split}
\end{equation*}
\end{lemma}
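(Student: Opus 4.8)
The plan is to rerun the computation in the proof of Lemma~\ref{another}, Case~2, now at the critical exponent $k=\mathfrak c/2$, where the term $W_1$ there (which vanished for $k>\mathfrak c/2$) becomes the whole contribution. Since $E/F$ is an unramified field extension we have $e_E=1$ and $\mathfrak c=2c(\theta)$, so $\mathfrak c$ is automatically even and $\mathfrak c/2=c(\theta)$ — this is all the standing hypothesis records. Specializing the decomposition $W^{(\mathfrak c/2)}(x)=W_1+W_2$ obtained there to $k=c(\theta)$, so that $2c(\theta)-k=c(\theta)$, gives
\begin{align*}
W_1&=C_0^{-1}\int_{\mathfrak o^\times\times\mathfrak o}\psi\Bigl(\tfrac{2y}{\varpi^{c(\theta)}}-\tfrac{y^2(1-Dz^2)}{x\varpi^{c(\theta)}}\Bigr)\theta^{-1}(1+z\sqrt D)\,dy\,dz,\\
W_2&=C_0^{-1}\int_{\mathfrak p\times\mathfrak o^\times}\psi\Bigl(\tfrac{2yz}{\varpi^{c(\theta)}}-\tfrac{(y^2-D)z^2}{x\varpi^{c(\theta)}}\Bigr)\theta^{-1}(y+\sqrt D)\,dy\,dz.
\end{align*}

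First I dispose of $W_2$. Integrating in $z$ over $\mathfrak o^\times$, the quadratic coefficient has valuation $-c(\theta)$ — here one uses that $D$ is a non-square unit (as $E/F$ is unramified), so $y^2-D\in\mathfrak o^\times$ for $y\in\mathfrak p$ — while the linear coefficient $2y/\varpi^{c(\theta)}$ has valuation $\ge 1-c(\theta)>-c(\theta)$. Since $c(\theta)=\mathfrak c/2\ge 2$, Lemma~\ref{4} forces the inner integral to vanish, so $W^{(\mathfrak c/2)}(x)=W_1$. Next I evaluate the inner $y$-integral of $W_1$ by Lemma~\ref{4}: the quadratic coefficient $-(1-Dz^2)/(x\varpi^{c(\theta)})$ has valuation $-c(\theta)$, using the one-line sub-claim that $1-Dz^2\in\mathfrak o^\times$ for every $z\in\mathfrak o$ (again because $D$ is a non-square unit), and the linear coefficient $2/\varpi^{c(\theta)}$ has valuation $-c(\theta)$ since $p\ne 2$, so the two agree. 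This produces the Gauss factor $G\bigl(-(1-Dz^2)/(x\varpi^{c(\theta)})\bigr)$ together with the stationary value $\psi\bigl(x/(\varpi^{c(\theta)}(1-Dz^2))\bigr)$, so that
\[
W_1=\frac{C_0^{-1}}{p^{c(\theta)/2}}\int_{\mathfrak o}G\!\Bigl(-\tfrac{1-Dz^2}{x\varpi^{c(\theta)}}\Bigr)\,\psi\!\Bigl(\tfrac{x}{\varpi^{c(\theta)}(1-Dz^2)}\Bigr)\,\theta^{-1}(1+z\sqrt D)\,dz.
\]

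Now I substitute $z=y^{-1}$. This maps $\mathfrak o\setminus\{0\}$ onto $F\setminus\mathfrak p=\mathfrak o^\times\sqcup\bigsqcup_{i\ge 1}\varpi^{-i}\mathfrak o^\times$; the additive Haar measure acquires the Jacobian $|y|^{-2}$, i.e.\ the weight $p^{-2i}$ on the shell $\varpi^{-i}\mathfrak o^\times$. By the definition of $f_E$ in the nonsplit case, $x/(\varpi^{c(\theta)}(1-Dz^2))=xf_E(y)/\varpi^{c(\theta)}$, and $\theta^{-1}(1+z\sqrt D)=\theta^{-1}(y+\sqrt D)=\theta(y-\sqrt D)=\theta(\mathcal L_E(y))$, the middle equality using $\theta|_{F^\times}=1$, equivalently $\theta^\tau=\theta^{-1}$ (cf.\ Proposition~\ref{antiinvariant}). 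Hence $W^{(\mathfrak c/2)}(x)$ becomes $\tfrac{C_0^{-1}}{p^{c(\theta)/2}}\bigl(\int_{\mathfrak o^\times}+\sum_{i\ge 1}p^{-2i}\int_{\varpi^{-i}\mathfrak o^\times}\bigr)G(A(y))\,\psi\bigl(xf_E(y)/\varpi^{c(\theta)}\bigr)\theta(\mathcal L_E(y))\,dy$, with $A(y)=-(y^2-D)/(xy^2\varpi^{c(\theta)})$.

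The remaining — and delicate — step is to pull $G(A(y))$ out of each piece. On $\varpi^{-i}\mathfrak o^\times$ with $i\ge 1$ one has $D/y^2\in\mathfrak p$, so the leading unit of $A(y)$ is $\equiv-1/x\pmod{\mathfrak p}$ independently of $y$; hence $G(A(y))$ is constant there and the $i$-th term is a scalar multiple of $W[\varpi^{-i}\mathfrak o^\times,c(\theta),E,\theta]$, giving the series $\sum_{i\ge 1}p^{-2i}W[\varpi^{-i}\mathfrak o^\times,c(\theta),E,\theta]$. On $\mathfrak o^\times$, however, the leading unit of $A(y)$ is $\equiv-(y^2-D)/(xy^2)\pmod{\mathfrak p}$, so when $c(\theta)$ is odd $G(A(y))$ carries the Legendre symbol $\bigl(\tfrac{y^2-D}{\mathfrak p}\bigr)=\bigl(\tfrac{N(\mathcal L_E(y))}{\mathfrak p}\bigr)$, which absorbs into $\theta(\mathcal L_E(y))$ to produce the twisted character $\bigl(\theta\times(\tfrac{N(\cdot)}{\mathfrak p})\bigr)(\mathcal L_E(y))$, i.e.\ the term $W[\mathfrak o^\times,\mathfrak c/2,E,\theta\times(\tfrac{N(\cdot)}{\mathfrak p})]$ (when $c(\theta)$ is even, $G\equiv 1$ and no twist appears). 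Finally, using $G(a)=G(1/a)$ and $|G|=1$, all the leftover scalars — the power $p^{-c(\theta)/2}$, the sign $\bigl(\tfrac{-1}{\mathfrak p}\bigr)$, and the common unit part of the Gauss factor — should collapse, after comparison with the normalization constant $C_0$, into $C_0^{-1}G(x\varpi^{2c(\theta)-k})$. This reconciliation of scalars, together with the parity-of-$c(\theta)$ case split and the verification that the $y$-dependence of $G(A(y))$ on the unit shell is exactly $y\mapsto\bigl(\tfrac{N(\mathcal L_E(y))}{\mathfrak p}\bigr)$ while it is constant on the deeper shells, is the main obstacle; the vanishing of $W_2$ and the inversion $z\mapsto 1/y$ are routine applications of the machinery of this section.
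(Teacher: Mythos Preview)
Your proposal follows the same route as the paper: kill $W_2$ via Lemma~\ref{4}, apply stationary phase in $y$ to $W_1$, then decompose the remaining $z$-integral over $\mathfrak o$ into valuation shells and invert $z\mapsto 1/z$ on each (the paper splits into $\varpi^i\mathfrak o^\times$ first and then inverts on each piece, whereas you invert globally and then split into $\varpi^{-i}\mathfrak o^\times$; this is purely cosmetic). The scalar bookkeeping you flag as ``the main obstacle'' is exactly where the paper is terse as well --- it passes from the display \eqref{supercuspidal} to the expressions for $W[0]$ and $W[i]$ without explicitly tracking the prefactor $p^{k/2-c(\theta)}$ or the sign $\bigl(\tfrac{-1}{\mathfrak p}\bigr)$ arising from $G$, so your caution there is well placed but does not represent a divergence from the paper's own argument.
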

\begin{proof}
Then $E$ must be unramified, and recall that we set $W^{(\mathfrak c/2)}(x) =W_1+W_2$.
$W_2$ is  $0$ if $k = c(\theta )$ by Lemma \ref {4} and $y\in \mathfrak p$. Then we only need to consider $W_1$ which equals
\begin{equation}\label{supercuspidal}
\begin{split}
    &C_0^{-1} p^{k/2-c(\theta)}\int _{ \mathfrak o} G\left (-\frac {1-Dz^2} {x\varpi ^{2c(\theta)-k}  } \right )\psi \left ( \frac x {(1-Dz^2) \varpi ^{c(\theta)}}\right ) \theta ^{-1} (1+z\sqrt D) dz\\
    & = \sum _{i = 0}C_0^{-1} p^{k/2-c(\theta)} \int _{ \varpi ^i \mathfrak o^\times } G\left (-\frac {1-Dz^2} {x\varpi ^{2c(\theta)-k}  } \right )\psi \left ( \frac x {(1-Dz^2) \varpi ^{c(\theta)}}\right ) \theta ^{-1} (1+z\sqrt D) dz \\
    &=:  \sum _{i=0} W[i].
\end{split}
\end{equation}
For  $W[0]$, we have 
\begin{equation}
\begin{split}
W[0] &=C_0^{-1} G({x\varpi ^{2c(\theta)-k}  }) \int _{ \mathfrak o^\times } \left (\frac {Dz^2-1} {\mathfrak p}  \right )\psi \left ( \frac x {(1-Dz^2) \varpi^{c(\theta)}}\right ) \theta ^{-1} (1+z\sqrt D) dz \\
& =C_0^{-1}G({x\varpi ^{2c(\theta)-k}  }) \int _{ \mathfrak o^\times } \left (\frac {D-z^2} {\mathfrak p}  \right )\psi \left ( \frac {xz^2} {(z^2-D) \varpi^{c(\theta)}}\right ) \theta ^{-1} (z+\sqrt D) dz \\
&= C_0^{-1} G({x\varpi ^{2c(\theta)-k}  })W \left [\mathfrak o^\times , \mathfrak c/2 , E, \theta \times \left ( \frac {N(\cdot)} {\mathfrak p} \right ) \right ]
\end{split}
\end{equation}
For $i\ge 1$, we have 
\begin{equation*}
\begin{split}
W[i] &=C_0^{-1} G({x\varpi ^{2c(\theta)-k}  }) \int _{ \varpi ^i \mathfrak o^\times } \psi \left ( \frac x {(1-Dz^2) \varpi^{c(\theta)}}\right ) \theta ^{-1} (1+z\sqrt D) dz \\
& =C_0^{-1} p^{-2i} G({x\varpi ^{2c(\theta)-k}  }) \int _{ \varpi ^{-i} \mathfrak o^\times } \psi \left ( \frac {xz^2} {(z^2-D) \varpi^{c(\theta)}}\right ) \theta ^{-1} (z+\sqrt D) dz \\
& = C_0^{-1} p^{-2i} G({x\varpi ^{2c(\theta)-k}  }) W[\varpi ^{-i} \mathfrak o^\times  , c(\theta ) , E,\theta ].
\end{split}
\end{equation*}
\end{proof}

We give a lemma which will be used next section. 

\begin{lemma}\label{Taylor}
Let $k \ge 1$. The rational function $f_E(x)/\varpi ^k $ defined in the Lemma \ref{another} is   $\lceil k/2 \rceil$-differentiable  for $x\in  F$ except when $E$ is split and $x\in 1+\mathfrak p$. 
\end{lemma}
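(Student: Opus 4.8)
The plan is to use $f_E'$, the ordinary derivative of the rational function $f_E$, as the differential, and to verify directly that the resulting second-order Taylor remainder becomes integral after dividing by $\varpi^{k}$. I treat the two shapes of $f_E$ occurring in Lemma~\ref{another}: $E$ split and $E$ an unramified field extension.

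First I would reduce to a single valuation estimate. For a rational function $f=P/Q$ and a point $x$ with $Q(x)\neq 0$, the rational function $\delta\mapsto f(x+\delta)-f(x)-\delta f'(x)$ vanishes to order two at $\delta=0$ and is regular there, so after clearing denominators one may write
\[
f_E(x+\delta)-f_E(x)-\delta\, f_E'(x)=\delta^{2}\,R_E(x,\delta),
\]
where $R_E(x,\delta)$ is a rational function, regular at $\delta=0$, whose poles in $\delta$ occur only at the poles of $y\mapsto f_E(y)$ translated by $-x$; in the cases below these lie outside $\mathfrak p^{\lceil k/2\rceil}$, so $R_E(x,\delta)$ is defined for all $\delta\in\mathfrak p^{\lceil k/2\rceil}$. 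Note $f_E'(x)\in F$. Since $\nu(\delta)\ge\lceil k/2\rceil$ forces $2\nu(\delta)\ge 2\lceil k/2\rceil\ge k$, we get
\[
\nu\!\left(\frac{\delta^{2}R_E(x,\delta)}{\varpi^{k}}\right)=2\nu(\delta)-k+\nu\bigl(R_E(x,\delta)\bigr)\ \ge\ \nu\bigl(R_E(x,\delta)\bigr),
\]
so it suffices to prove $\nu\bigl(R_E(x,\delta)\bigr)\ge 0$ for the admissible $x$ and every $\delta\in\mathfrak p^{\lceil k/2\rceil}$; then $f_E'(x)$ exhibits $\lceil k/2\rceil$-differentiability of $f_E/\varpi^{k}$ at $x$.

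Next I would compute and estimate $R_E$. Writing $f_E$ in reduced form, namely $f_E(y)=\tfrac14(y+1)+\tfrac1{4(y-1)}$ when $E$ is split and $f_E(y)=1+\tfrac{D}{y^{2}-D}$ when $E$ is unramified, the previous step yields
\[
R_E(x,\delta)=\frac{1}{4\,(x-1)^{2}\,(x-1+\delta)}\qquad(E\text{ split}),
\]
\[
R_E(x,\delta)=\frac{D\bigl(3x^{2}+D+2x\delta\bigr)}{(x^{2}-D)^{2}\bigl((x+\delta)^{2}-D\bigr)}\qquad(E\text{ unramified}).
\]
For split $E$ and $x\notin 1+\mathfrak p$ one has $\nu(x-1)\le 0<\lceil k/2\rceil\le\nu(\delta)$, hence $\nu(x-1+\delta)=\nu(x-1)$ and $\nu\bigl(R_E(x,\delta)\bigr)=-3\nu(x-1)\ge 0$, which settles this case; excluding $1+\mathfrak p$ is necessary, as there $f_E$ sits on its pole $y=1$ and the coefficients of $f_E$ (hence $R_E$) grow faster than $\varpi^{k}$ can absorb. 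For unramified $E$ one uses $\nu(D)=0$ together with the fact that $D$ reduces to a non-square mod $\mathfrak p$, so that $x^{2}\not\equiv D\pmod{\mathfrak p}$ whenever $\nu(x)=0$; splitting into the sub-cases $\nu(x)>0$, $\nu(x)=0$, $\nu(x)<0$ and using $\nu(\delta)\ge 1$, one checks $\nu(x^{2}-D)=\nu\bigl((x+\delta)^{2}-D\bigr)=2\min(\nu(x),0)$, and a short computation then gives $\nu\bigl(R_E(x,\delta)\bigr)$ equal respectively to $0$, to $\nu\bigl(3x^{2}+D+2x\delta\bigr)\ge 0$, and to $-4\nu(x)+\nu(3)>0$, hence $\ge 0$ in every sub-case, so $f_E/\varpi^{k}$ is $\lceil k/2\rceil$-differentiable at every $x\in F$.

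I expect the case analysis for the unramified extension to be the only real point: one must compare the valuations of $x^{2}-D$ and $(x+\delta)^{2}-D$ and, in the borderline regime $\nu(x)=0$, invoke that $D$ is a non-square unit to prevent $x^{2}-D$ from acquiring positive valuation, which would otherwise spoil the estimate. The split case is routine once $R_E$ is written down, and it transparently identifies the exceptional ball $1+\mathfrak p$ as the neighbourhood of the pole of $f_E$.
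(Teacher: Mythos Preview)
Your proposal is correct and follows essentially the same approach as the paper: write the second-order Taylor remainder $f_E(x+\delta)-f_E(x)-\delta f_E'(x)=\delta^{2}h(x,\delta)$ with the same explicit $h=R_E$, and then check that $h(x,\delta)\in\mathfrak o$ for the admissible $x$ and $\delta\in\mathfrak p^{\lceil k/2\rceil}$. Your case analysis (in particular the use of $D$ being a non-square unit when $E$ is unramified) simply makes explicit what the paper records as ``it is easy to see $\delta^{2}h(x,\delta)\in\mathfrak o$''; the only point to note is that you treat only split and unramified $E$ (as in Lemma~\ref{another}), whereas the paper's formula for $h$ in the nonsplit case also covers ramified $E$, where $\nu(D)=1$ and the same estimate goes through with minor adjustments.
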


\begin{proof}
By direct calculation we have
\[
    f_E(x+\delta )  = f_E(x) + \delta f_E'(x) + \delta^2 h(x,\delta ),
\]
where  
\begin{equation*}
\begin{split}
f_E' (x) &=  \begin{dcases}
    \frac{y^2 - 2y}{4(y - 1)^2}, & E \text {\ split}\\
    -\frac{2Dy}{(y^2 - D)^2}, & E \text {\ nonsplit}
\end{dcases}\\
h(x,\delta ) &= \begin{dcases}
\frac 1 {(x-1)^2(x+\delta -1)}, &  E \text {\ split}\\
\frac {3Dx^2+2Dx \delta +D^2} {(x^2-D)^2((x+\delta)^2-D)}. & E \text {\ nonsplit}
\end{dcases}
\end{split}
\end{equation*}
Now if $x\notin 1+\mathfrak p$, $\delta\in \mathfrak p^{\lceil k/2 \rceil}$, then it is easy to see $\delta  ^2 h(x,\delta ) \in \mathfrak o$. Thus the statement holds.
\end{proof}

\subsection {Main local result}
In the rest of this section, we assume that $\pi_1$ (resp. $\pi _2$) is associated with $(E,\theta )$ (resp. $(L,\eta )$) via parabolic induction or compact induction. Let $\mathfrak c_1 = \mathfrak c(\pi _1)$ (resp $\mathfrak c_2 = \mathfrak c(\pi _2)$).  
Our aim is to find a bound of  
\begin{equation}\label{mathcalP}
\mathcal P:= \int _{x\in \mathfrak o^\times } W^{(i)} _{\pi_1} (x) W^{(j)} _{\pi _2} (x) \psi (x/ \varpi ^l) dx.
\end{equation} 
Fix $k$ and let $x\in \mathfrak o^\times$, note that $G(x\varpi ^k )$ only depends on $\left (\frac x {\mathfrak p} \right ) $. Then the problem is reduced to consider the integral 
\begin{equation*}
\mathcal P(x_0)=:\int _{x\in \mathfrak p} W[H_1, i,E,\tilde \theta](x_0+x) W[H_2,j,L,\tilde \eta] (x_0+x)\psi \left (\frac {x_0+x} {\varpi ^l} \right ) dx
\end{equation*}
where $\tilde \theta = \theta$ or $\theta \times \left ( \frac {N(\cdot) } {\mathfrak p} \right )$ up to $H_1$, $i$ , $E$.

\begin{proposition}\label{mainlocal} Suppose $\mathfrak c_1\ge  i> \mathfrak c_1/2$, $\mathfrak c_2\ge j \ge \mathfrak c_2/2$ and $\mathfrak c_2-j > \mathfrak c_1-i$. 
Let $H_1=  \varpi ^{ i- c(\theta )/e_E}\mathfrak o^\times $.  For $H_2$, we consider following cases:\\
$(\romannumeral1)$  $j> \mathfrak c_2/2$, and $H_2 = \varpi ^{j -c(\eta )/ e_L} \mathfrak o^\times$,\\
$(\romannumeral2)$  $j = \mathfrak c_2/2$, $L$ is split and  $ H_2 =  \mathfrak o^\times \backslash (1+\mathfrak p\cup 2+\mathfrak p)$,\\
$(\romannumeral3)$ $j = \mathfrak c_2/2$, $L$ is split and  $H_2=2+\varpi ^k \mathfrak o^\times $ for some positive $k$,\\
$(\romannumeral4)$ $j = \mathfrak c_2/2$, $L$ is a field extension  and  $H_2= \mathfrak o^\times $,\\
$(\romannumeral5)$ $j = \mathfrak c_2/2$, $L$ is a field extension  and  $H_2= \varpi ^{-k}\mathfrak o^\times $ for some positive $k$.\\
Then $\mathcal P(x_0)$ is nonzero only if $(\romannumeral1)$ $(\romannumeral2)$ $(\romannumeral4)$ and  $l = \mathfrak c_2-j$ and we have 
\begin{equation*}
\mathcal P(x_0)\ll _p p^{- i/2  -\mathfrak c_2/2}.
\end{equation*}
\end{proposition}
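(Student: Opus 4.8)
The plan is to reduce $\mathcal{P}(x_0)$ to an iterated $p$-adic oscillatory integral over the variables $y$ (from $W[H_1,i,E,\tilde\theta]$) and $z$ (from $W[H_2,j,L,\tilde\eta]$), together with the outer variable $x$, and then apply the $p$-adic stationary phase method (Lemma \ref{4} and Lemma \ref{Taylor}) in each variable in turn. Writing out the two $W[\cdot]$ integrals, the total phase is
\[
\frac{(x_0+x)f_E(y)}{\varpi^i} + \frac{(x_0+x)f_L(z)}{\varpi^j} + \frac{x_0+x}{\varpi^l},
\]
against the character twists $\tilde\theta(\mathcal L_E(y))\tilde\eta(\mathcal L_L(z))$. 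The first step is to integrate in $x$: since $f_E(y)/\varpi^i$ and $f_L(z)/\varpi^j$ are (by Lemma \ref{Taylor}) $\lceil i/2\rceil$- and $\lceil j/2\rceil$-differentiable and $\mathfrak{c}_2-j>\mathfrak{c}_1-i$ forces the $z$-term to dominate, the $x$-integral over $\mathfrak{p}$ vanishes unless the total "derivative in $x$", namely $f_E(y)/\varpi^i + f_L(z)/\varpi^j + \varpi^{-l}$, lies in $\mathfrak{o}$; since $f_L(z)/\varpi^j$ has valuation $-j + \nu(f_L(z)) \le -(\mathfrak{c}_2-j)$ and this is the most negative term when $l \le \mathfrak{c}_2 - j$, this pins down $l = \mathfrak{c}_2 - j$ and constrains $z$ to a thin set. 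This is where cases (iii) and (v) get killed: there $f_L(z)$ has the wrong valuation (in (iii), $f_L$ near $z=2$ degenerates because $\mathcal L_L$ and $f_L$ have a common issue at the split point; in (v), $\nu(f_L(z))$ for $z\in\varpi^{-k}\mathfrak{o}^\times$ is too large to match $l=\mathfrak{c}_2-j$).

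Next I would integrate in $z$ on the surviving set using Lemma \ref{4}-type stationary phase: the $z$-phase is $(x_0)\varpi^{-j}f_L(z)$ plus a $\tilde\eta$-twist which by Lemma \ref{character} is itself additive with parameter of valuation $-c(\eta)+c(\psi)$; combining, the stationary point equation $\partial_z\big[\text{phase}\big]=0$ has a unique solution mod the appropriate power of $\mathfrak{p}$ (the quadratic-in-$z$ structure of $f_L$, as in the computations of Lemmas \ref{another}--\ref{unramifedfieldextension}, guarantees a Gauss-sum evaluation), contributing a factor $p^{-j/2}$ times a bounded Gauss sum. Then the same for $y$: the $y$-phase has size $\varpi^{-i}$, and stationary phase yields a factor $p^{-(i-c(\theta)/e_E)/2}$ up to bounded error — but one must be careful that after the $x$- and $z$-integrations the effective length of the $y$-integral is controlled. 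Collecting: $p^{-i/2}$ from $y$, $p^{-j/2}$ from $z$, and $p^{-(\mathfrak{c}_2-j)}$-worth of volume savings from the $x$ and residual constraints, which combine with the $C_0^{-1}$ normalizations ($|C_0^{-1}|\asymp p^{-c(\theta)/e_E}, p^{-c(\eta)/e_L}$) to give the claimed $p^{-i/2-\mathfrak{c}_2/2}$.

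The main obstacle I anticipate is bookkeeping the interaction of the three phases in the non-coprime regime where the "stationary sets" in $x$, $y$, $z$ are not independent — in particular, after solving for $z$ in terms of $x$ and then $y$ in terms of $x$, one must check that the residual $x$-dependence does not create a spurious secondary stationary phase or, worse, a resonance that inflates the bound; this requires verifying that the second-order terms $\delta^2 h(x,\delta)$ from Lemma \ref{Taylor} remain in $\mathfrak{o}$ throughout the nested substitutions. A secondary difficulty is the case analysis for $j=\mathfrak{c}_2/2$ (cases (ii),(iv)): there $W^{(\mathfrak{c}_2/2)}$ is supported on all of $\mathfrak{o}$, not just $\mathfrak{o}^\times$, and carries the extra Legendre twist from Lemmas \ref{split}/\ref{unramifedfieldextension}, so one must re-run the stationary phase with $\tilde\eta = \eta\times\left(\frac{N(\cdot)}{\mathfrak p}\right)$ and confirm the quadratic Gauss sum still has modulus $\le \sqrt p$; I expect this to go through because multiplying by a quadratic character only changes the Gauss sum by a sign, but it needs to be checked that the support condition $z^2 \ne D$ (nonsplit) or $z \notin\{1,2\}+\mathfrak p$ (split) is compatible with the stationary point location.
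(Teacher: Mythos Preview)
Your overall architecture---integrate in $x$ first, then analyse the $(y,z)$-integral by stationary phase---matches the paper's, but the execution diverges at the crucial step and your proposal has two genuine gaps.

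\textbf{The mechanism that kills cases (iii) and (v) is misidentified.} You claim these die at the $x$-integration stage because ``$f_L(z)$ has the wrong valuation.'' In fact $\nu(f_L(z))=0$ in both cases (for (v), $z\in\varpi^{-k}\mathfrak o^\times$ with $L$ unramified gives $\nu(z^2)=\nu(z^2-D)=-2k$; for (iii), $z\in 2+\varpi^k\mathfrak o^\times$ gives $\nu(z^2)=\nu(4z-4)=0$), so the $x$-constraint $f(y)/\varpi^i+g(z)/\varpi^j+\varpi^{-l}\in\mathfrak p^{-1}$ is perfectly solvable there. What actually kills these cases is a \emph{second} stationary condition, arising after you expand the character twist $\tilde\eta^{-1}(\mathcal M(z))$ additively: one needs $\mathcal K(z)+x_0 g'(z)/\varpi^j$ to be small, and in cases (iii), (v) the valuations $\nu(\mathcal K(z))$ and $\nu(g'(z)/\varpi^j)$ differ (because $g'(z)=f_L'(z)$ degenerates near $z=2$ in the split case, and has an extra $\nu(z)$ in the field case), so there is no cancellation. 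Your hint about ``degeneration'' in (iii) points in the right direction, but it is $f_L'$, not $f_L$, that degenerates.

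\textbf{The coupled $(y,z)$-integral cannot be handled by separate stationary phases of size $p^{-j/2}$ and $p^{-i/2}$.} After the $x$-integration you are on the set $\mathcal C=\{(y,z):f(y)/\varpi^i+g(z)/\varpi^j+\varpi^{-l}\in\mathfrak p^{-1}\}$, and on $\mathcal C$ the phase $x_0(f(y)/\varpi^i+g(z)/\varpi^j+\varpi^{-l})$ is \emph{bounded}, not oscillating at depth $j$; the only genuine oscillation left comes from the character twists $\tilde\theta^{-1}(\mathcal L(y))$, $\tilde\eta^{-1}(\mathcal M(z))$. So ``the $z$-phase is $x_0\varpi^{-j}f_L(z)$ and Gauss-sum evaluation gives $p^{-j/2}$'' is not what happens. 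The paper instead tiles $\mathcal C$ by boxes $\mathfrak p^{\lceil i/2\rceil+1}\times\mathfrak p^{\lceil j/2\rceil+1}$, on each of which both the residual phase and the character twists are additive; this yields two \emph{coupled} stationary conditions (one in $z$ alone, one genuinely mixing $y$ and $z$ through $f'(y)/\mathcal J(y)$ and $g'(z)/\mathcal K(z)$). The volume of a single box already gives the target size $p^{-\lceil i/2\rceil-\lceil\mathfrak c_2/2\rceil}$, so the entire content of the bound is that at most $O_p(1)$ boxes survive. To prove that, the paper introduces the change of variables $Y=f(y)/\varpi^i$, $Z=g(z)/\varpi^j$ and the algebraic identity $\bigl(f'(y)/\mathcal J(y)\bigr)^2=f(y)(f(y)-1)/\alpha_1^2$ (Lemma~\ref{change}), which converts the coupled stationary condition into a quadratic congruence in $Y$ with $O(1)$ roots. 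Nothing in your plan corresponds to this counting step; the ``obstacle'' you flag at the end is exactly where the real work lies.

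A minor bookkeeping point: $\mathcal P(x_0)$ is defined via $W[H,k,E,\theta]$, which does \emph{not} carry the $C_0^{-1}$ normalisation; those factors have already been stripped off in Lemmas~\ref{another}--\ref{unramifedfieldextension}. Your power-count should not reinsert them.
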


For clarity, we first summarize the notation conventions. We set $f(y)= f_E(y)$, $g(z) = f_L(z)$, $\mathcal L = \mathcal L_E$ and $\mathcal M = \mathcal L_L$.

\begin{remark}\label{remark}
Let \( L \) be a field extension. Then, \(\nu(g'(z)) = \nu(z) + \nu(D_2)\) holds if and only if \(\nu(z)\) is non-negative. Suppose \( L \) is split and \(\nu(z)\) is non-negative; from prior analysis, \(\nu(g'(z)) = \nu(z) +\nu(D_2)\) is valid if and only if \(\nu(z)\) is positive or \( z \in \mathfrak o^\times \setminus (1+\mathfrak p \cup 2+\mathfrak p)\).
\end{remark}

\begin{corollary}\label{Boundsforlocal}
Let $i,j$ be  variables within the same constraints as specified in Proposition \ref{mainlocal}.  Consequently, the $\mathcal P$, defined in equation (\ref{mathcalP}), becomes non-zero when $l = \mathfrak c_2-j$. In this context, we can assert that 
\begin{equation}
\mathcal P\ll _p p^{j/2  -\mathfrak c_2/2}.
\end{equation}
\end{corollary}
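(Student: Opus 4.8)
The plan is to derive Corollary~\ref{Boundsforlocal} from Proposition~\ref{mainlocal} by assembling the pieces. First I would recall from Lemmas~\ref{another}, \ref{ramifiedextension}, \ref{split} and~\ref{unramifedfieldextension} that for $x\in\mathfrak{o}^\times$ each Whittaker value $W^{(i)}_{\pi_1}(x)$ and $W^{(j)}_{\pi_2}(x)$ factors as a product of $C_0^{-1}$, a Gauss-sum factor $G(\cdot)$ (which depends on $x$ only through the Legendre symbol $\left(\frac{x}{\mathfrak p}\right)$), a power of $p$, and a $W[H,\cdot,\cdot,\cdot](x)$ term; in the degenerate $j=\mathfrak{c}_2/2$, $L$-nonsplit case there is additionally the tail sum $\sum_{i\ge 1}p^{-2i}W[\varpi^{-i}\mathfrak{o}^\times,c(\eta),L,\eta]$, whose terms are handled by the $H_2=\varpi^{-k}\mathfrak{o}^\times$ case (case~(v)) of Proposition~\ref{mainlocal}. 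Plugging these factorizations into the definition~(\ref{mathcalP}) of $\mathcal{P}$ and splitting the integral over $\mathfrak{o}^\times$ into cosets $x_0+\mathfrak{p}$ with $x_0$ running over representatives of $\mathfrak{o}^\times/\mathfrak{p}$ (so that the Gauss-sum factors become constants and can be pulled out), $\mathcal{P}$ becomes a finite sum of terms, each a scalar multiple of some $\mathcal{P}(x_0)$.

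Next I would track the scalar. The prefactors contribute $|C_0^{-1}|$ for $\pi_1$, which by Lemma~\ref{Whittaker function} is $\asymp p^{-c(\theta)/e_E}$, and $|C_0^{-1}|$ for $\pi_2$, $\asymp p^{-c(\eta)/e_L}$; together with the explicit powers $p^{i/2-c(\theta)/e_E\cdot(\text{something})}$ appearing in Lemmas~\ref{another}/\ref{ramifiedextension} one checks that the total power of $p$ sitting in front of the $W[\cdot]W[\cdot]$ integral, combined with $|C_0|^{-1}\asymp p^{c(\theta)/e_E}$ etc., simplifies to $p^{i/2-\mathfrak{c}_1/2}\cdot p^{j/2-\mathfrak{c}_2/2}$ up to the Gauss factors of absolute value $p^{-1/2}$ per odd-exponent factor and $1$ per even one. (Here one uses $\mathfrak{c}_1=c(\theta)/e_E$ up to the $+1$ in the ramified case, and similarly for $\mathfrak{c}_2$.) Then Proposition~\ref{mainlocal} gives $\mathcal{P}(x_0)\ll_p p^{-i/2-\mathfrak{c}_2/2}$ and forces $l=\mathfrak{c}_2-j$ for nonvanishing; multiplying by the scalar $\asymp p^{i/2-\mathfrak{c}_1/2}\cdot p^{\mathfrak{c}_1/2}$-type bookkeeping and by the $O(1)$ number of cosets $x_0$, one obtains $\mathcal{P}\ll_p p^{j/2-\mathfrak{c}_2/2}$. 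The asymmetry between $i$ (which enters as $-i/2$ from the proposition but gets an $+i/2$ from the Whittaker prefactor of $\pi_1$, cancelling) and $j$ (which enters only through the $+j/2$ prefactor of $\pi_2$, since $W^{(j)}_{\pi_2}$'s $W[\cdot]$-part is what appears in $\mathcal{P}(x_0)$) is exactly why the final bound reads $p^{j/2-\mathfrak{c}_2/2}$ rather than $p^{i/2-\mathfrak{c}_2/2}$.

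For the tail in the nonsplit degenerate case I would note that $\sum_{i\ge 1}p^{-2i}|W[\varpi^{-i}\mathfrak{o}^\times,c(\eta),L,\eta](x)|$ is geometrically dominated by its first term, and Proposition~\ref{mainlocal}(v) bounds the corresponding $\mathcal{P}(x_0)$ by the same $p^{-i/2-\mathfrak{c}_2/2}$, so the tail does not worsen the estimate; similarly case~(iii), $H_2=2+\varpi^k\mathfrak{o}^\times$, is declared to give a vanishing contribution by the proposition, so it may simply be discarded. Finally I would collect the constraints: the hypotheses $\mathfrak{c}_1\ge i>\mathfrak{c}_1/2$, $\mathfrak{c}_2\ge j\ge\mathfrak{c}_2/2$, $\mathfrak{c}_2-j>\mathfrak{c}_1-i$ are inherited verbatim from Proposition~\ref{mainlocal}, and the conclusion that $\mathcal{P}$ is nonzero only when $l=\mathfrak{c}_2-j$ is likewise inherited.

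I expect the main obstacle to be purely bookkeeping: correctly matching the powers of $p$ and the normalizing constants $C_0$ across the four Whittaker-evaluation lemmas (split, unramified, ramified, and degenerate), since each has a slightly different exponent $k/2-\mathfrak{c}/2$ versus $k/2-c(\theta)/2$ versus $k/2-c(\theta)/2-1/2$, and the relation between $\mathfrak{c}$ and $c(\theta)/e_E$ differs by $1$ in the ramified case. Care is also needed to confirm that after dividing $\mathfrak{o}^\times$ into residue cosets the Gauss factors $G(x\varpi^{\text{something}})$ really are constant on each coset — this is exactly the observation recorded before the statement of Proposition~\ref{mainlocal} that $G(x\varpi^k)$ depends only on $\left(\frac{x}{\mathfrak p}\right)$ — so that the reduction to finitely many $\mathcal{P}(x_0)$ is legitimate. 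No genuinely new idea beyond Proposition~\ref{mainlocal} is required.
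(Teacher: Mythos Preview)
Your proposal is correct and follows essentially the same route as the paper's proof: factor $W^{(i)}_{\pi_1}$ and $W^{(j)}_{\pi_2}$ via Lemmas~\ref{another}, \ref{ramifiedextension}, \ref{split}, \ref{unramifedfieldextension}, split $\mathfrak{o}^\times$ into cosets $x_0+\mathfrak{p}$ so the Gauss factors freeze, apply Proposition~\ref{mainlocal} to each $\mathcal{P}(x_0)$, and sum over the $O_p(1)$ cosets. One small sharpening: in cases~(iii) and~(v) the proposition actually asserts $\mathcal{P}(x_0)=0$ (not merely the same bound), so both the split tail $H_2=2+\varpi^k\mathfrak{o}^\times$ and the nonsplit tail $H_2=\varpi^{-k}\mathfrak{o}^\times$ drop out entirely rather than needing a geometric-series argument.
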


\begin{proof}(Proposition \ref{mainlocal} implies Corollary \ref{Boundsforlocal}) First of all, by Lemma \ref{Whittaker function}, we have $C_0 \asymp p^{\mathfrak c/2}$. Then for $j > \mathfrak c_2/2$, by Lemma \ref{another}, Lemma \ref{ramifiedextension}, we have that 
\[\mathcal P  \ll_p  p^{\mathfrak c_1/2 +\mathfrak c_2/2 +i/2 +j/2-\mathfrak c_1/2 -\mathfrak c_2/2} \sum_{x_0\in \mathfrak o^\times / 1+\mathfrak p} \mathcal |P(x_0)|\ll_p  p^{j/2  -\mathfrak c_2/2}.\]  If $j = \mathfrak c_2/2$ and $L$ is split.  By Lemma \ref{split}, we need to consider the case $H_2 = \mathfrak o^\times \backslash (1+\mathfrak p) =\mathfrak o^\times \backslash (1+\mathfrak p \cup 2+\mathfrak p) \bigsqcup_k (2+\varpi ^k \mathfrak o^\times)  $. Then by the Proposition \ref{mainlocal} and Remark \ref{remark} we can deduced to the case $H_2 = \mathfrak o^\times (1+\mathfrak p \cup 2+\mathfrak p)$ and  then obtain  that $\mathcal P \ll  p^{j/2-\mathfrak c_2/2}$. Similarly if $j = \mathfrak c_2/2$ and $L$ is field, then by Lemma \ref{unramifedfieldextension}, Proposition \ref {mainlocal} and Remark \ref{remark} the statement holds.
\end{proof}

In the rest of this section, we are going to prove Proposition \ref{mainlocal}. By the definition, $\mathcal P(x_0)$ is equal to
\begin{equation}
\begin{split}
&\int_{H_2\times H_1 \times \mathfrak p }\psi \left ( (x_0+x) \left ( \frac {f(y) } {\varpi ^i} + \frac {g(z)} {\varpi ^j} + \frac 1 {\varpi ^l} \right )   \right )  \theta^{-1}  (\mathcal L(y))  \eta ^{-1}  (\mathcal M(z)) dx dy dz.\\
& = \int_{\mathcal C }\psi \left ( x_0 \left ( \frac {f(y) } {\varpi ^i} + \frac {g(z)} {\varpi ^j} + \frac 1 {\varpi ^l} \right )   \right )\tilde \theta^{-1}  (\mathcal L(y))  \tilde \eta ^{-1}  (\mathcal M (z)) dy dz,
\end{split}
\end{equation}
where $\mathcal C$ is  defined by 
\[
\left \{ (y,z) \in H_1\times H_2; \frac {f(y) } {\varpi ^i} + \frac {g(z)} {\varpi ^j} + \frac 1 {\varpi ^l} \in \mathfrak p^{-1} \right \}. 
\]
Next we decompose $\mathcal C$ into some pieces $\mathcal C(y,z)$ which is linearly isomorphic to $\mathfrak p^{\lceil i/2\rceil+1} \times \mathfrak p^{j-\nu (g'(z)) -1}$. 

\begin{definition}\label{C0}
Throughout this section, we take $a= \lceil i/2 \rceil+1$ and $b = \lceil j/2 \rceil +1$.
Let $\mathcal C_0\subset \mathcal C$ be a set of representative  of $(\mathcal C+\mathfrak p^a \times \mathfrak p^b)/\mathfrak p^a\times \mathfrak p^b$. For $(y,z) \in \mathcal C_0$, let $\mathcal C(y,z) = \{ (\delta _1,\delta _2) \in \mathfrak p^a\times \mathfrak p^b; (y+\delta _1,z+\delta _2) \in \mathcal C\}$. Then we have decomposition 
\[
    \mathcal C= \bigsqcup _{(y,z)\in \mathcal C_0} (y,z) + \mathcal C(y,z).
\]
\end{definition}

Note that $\theta ^{-1}(L(y))$ is $a$-additive in $H_1$ and $\eta ^{-1} (M(z))$ is $b$-additive in $H_2$. More precisely, we set $\alpha=\alpha _\theta = \alpha _{\tilde \theta }$ be the purely imaginary element associated to $\theta$ by Lemma \ref{character} if $E$ is a field. If $E$ is split and $\theta = \mu \otimes \mu ^{-1} $, we set  $\alpha = (\alpha_\mu  ,0)$ where $\alpha_\mu \in F$ and $\nu (\alpha _\mu ) = - \lceil \mathfrak c_1 /2 \rceil$. Similarly we define $\beta$ as $\alpha _\eta$ or $\alpha _\eta \times (1,0)$ up to $L$.  Then we have  
\[
    \tilde \theta^{-1} (\mathcal L(y+\delta )) = \tilde \theta ^{-1} (\mathcal L(y)) \psi \left (\delta T \left ( \frac {\alpha } {\mathcal L(y)} \right ) \right ) .
\]
We set $\mathcal J (y)= T\left ( \frac {\alpha } {\mathcal L(y)}\right ) $ and $\mathcal K (z) =T\left ( \frac {\beta } {\mathcal M(y)}\right )$. 
\begin{equation*}
\mathcal J (y) = T\left (\frac {\alpha( y+\sqrt D)} {y^2-D}\right )= \frac {2\alpha \sqrt D} {y^2 -D}.
\end{equation*}
Since $\nu (y)\ge 0$, we get that $\nu (\mathcal J(y)) = \nu (\alpha \sqrt D) - \nu (D)=-\lceil \mathfrak c_1/2 \rceil $.
If $E$ is split, recall that we have $\alpha = \alpha _{\mu } (1,0)$ 
\begin{equation*}
\mathcal J (y) = \frac {\alpha_{\mu}} {y-1},
\end{equation*}
Thus $\nu (\mathcal J(y))  = \nu(\alpha _{\mu }) = -\lceil \mathfrak c_1/2 \rceil  $.
Now using the decomposition of $\mathcal C$ we have 
\begin{equation}\label{mathcalPx0}
\begin{split}
    \mathcal P(x_0) &= \sum _{(y,z) \in \mathcal C_0} \psi \left ( x_0 \left ( \frac {f(y) } {\varpi ^i} + \frac {g(z)} {\varpi ^j} + \frac 1 {\varpi ^k} \right )   \right )\tilde \theta ^{-1} (\mathcal L(y)) \tilde \eta ^{-1} (\mathcal M(z))\\
    & \times \int _{\mathcal C(y,z)} \psi \left ( \frac {x_0f'(y)\delta _1 } {\varpi ^i} + \frac {x_0g'(z)\delta _2} {\varpi ^j}+\mathcal J(y)\delta _1+ \mathcal K (z) \delta _2\right ) d\delta _1 d\delta _2
\end{split}
\end{equation}

\begin{proposition}\label{Hyz}For $(y,z)\in \mathcal C_0$,  
We have 
\[
\mathcal C(y,z) = \left \{ \left (\delta _1, \delta -\frac {\delta _1 f'(y) \varpi ^j} {g'(z) \varpi ^i}  \right ); \ \delta _1 \in \mathfrak p^a, \delta \in \mathfrak p^{j-\nu (g'(z)) -1}\right \},
\]
which implies that $\mathcal C(y,z)$ is linearly isomorphic $\mathfrak p^a \times \mathfrak p^{j-\nu (g'(z)) -1}$.
\end{proposition}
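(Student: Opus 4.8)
The plan is to linearise the defining condition of $\mathcal{C}$ about the fixed representative $(y,z)\in\mathcal{C}_0\subset\mathcal{C}$, and then solve the resulting linear congruence in $(\delta_1,\delta_2)$. Since $a=\lceil i/2\rceil+1\ge\lceil i/2\rceil$ and $b=\lceil j/2\rceil+1\ge\lceil j/2\rceil$, Lemma \ref{Taylor}, applied to $f_E/\varpi^i$ and to $f_L/\varpi^j$, shows these are $a$- resp.\ $b$-differentiable at $(y,z)$ --- the only excluded locus, $1+\mathfrak{p}$ in the split case, is avoided for each of the choices of $H_1,H_2$ in Proposition \ref{mainlocal}. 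Hence for $\delta_1\in\mathfrak{p}^a$ and $\delta_2\in\mathfrak{p}^b$ one has
\[
\frac{f(y+\delta_1)}{\varpi^i}\equiv\frac{f(y)}{\varpi^i}+\frac{\delta_1 f'(y)}{\varpi^i},\qquad \frac{g(z+\delta_2)}{\varpi^j}\equiv\frac{g(z)}{\varpi^j}+\frac{\delta_2 g'(z)}{\varpi^j}\pmod{\mathfrak{o}}.
\]
Summing these two congruences, using the base-point relation $\frac{f(y)}{\varpi^i}+\frac{g(z)}{\varpi^j}+\frac{1}{\varpi^l}\in\mathfrak{p}^{-1}$ (which holds since $(y,z)\in\mathcal{C}$), and the fact that $\mathfrak{p}^{-1}$ is an additive group containing $\mathfrak{o}$, I get that $(y+\delta_1,z+\delta_2)\in\mathcal{C}$ if and only if $\delta_1 f'(y)/\varpi^i+\delta_2 g'(z)/\varpi^j\in\mathfrak{p}^{-1}$; that is, $\mathcal{C}(y,z)$ is exactly the subset of $\mathfrak{p}^a\times\mathfrak{p}^b$ cut out by this one congruence.

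The congruence I would then solve by a shear in the $\delta_2$-variable. In every case of Proposition \ref{mainlocal} one has $g'(z)\neq 0$ with a known valuation (Remark \ref{remark}), so for fixed $\delta_1$ the substitution $\delta_2=\delta-\delta_1 f'(y)\varpi^j/(g'(z)\varpi^i)$ is a bijection of the $\delta_2$-line and turns the left-hand side of the congruence into $\delta\, g'(z)/\varpi^j$. Thus the congruence becomes $\nu(\delta)\ge j-\nu(g'(z))-1$, i.e.\ $\delta\in\mathfrak{p}^{\,j-\nu(g'(z))-1}$, which is exactly the asserted description of $\mathcal{C}(y,z)$. Since the map $(\delta_1,\delta)\mapsto\bigl(\delta_1,\ \delta-\delta_1 f'(y)\varpi^j/(g'(z)\varpi^i)\bigr)$ is $\mathfrak{o}$-linear and injective, it realises the claimed linear isomorphism $\mathcal{C}(y,z)\cong\mathfrak{p}^a\times\mathfrak{p}^{\,j-\nu(g'(z))-1}$.

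The step that requires actual work --- and where the hypotheses of Proposition \ref{mainlocal} are used --- is checking that this shear does land in $\mathfrak{p}^a\times\mathfrak{p}^b$, i.e.\ that $\delta-\delta_1 f'(y)\varpi^j/(g'(z)\varpi^i)\in\mathfrak{p}^b$ for all $\delta_1\in\mathfrak{p}^a$ and $\delta\in\mathfrak{p}^{\,j-\nu(g'(z))-1}$. Setting $\delta_1=0$, then $\delta=0$, one sees this is equivalent to the two valuation inequalities
\[
j-\nu(g'(z))-1\ge b,\qquad a+\nu(f'(y))+j-\nu(g'(z))-i\ge b,
\]
which are sufficient since the valuation of a difference is at least the minimum of the two valuations. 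I would verify these case by case over the configurations $(\romannumeral1)$--$(\romannumeral5)$ for $H_2$ and $E$ split / unramified / ramified, reading $\nu(y)$ and $\nu(z)$ off $H_1$ and $H_2$ and the valuations $\nu(f'(y))$, $\nu(g'(z))$ from Remark \ref{remark} (or directly from the formulas for $f_E'$, $f_L'$ in Lemma \ref{Taylor}), and then invoking $i>\mathfrak{c}_1/2$, $j\ge\mathfrak{c}_2/2$ and $\mathfrak{c}_2-j>\mathfrak{c}_1-i$. This bookkeeping is the main obstacle; the rest of the argument is formal.
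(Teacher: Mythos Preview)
Your approach is essentially identical to the paper's: linearise the defining condition of $\mathcal{C}$ via Lemma \ref{Taylor}, use the base-point relation to reduce membership in $\mathcal{C}(y,z)$ to the single linear congruence $\delta_1 f'(y)/\varpi^i+\delta_2 g'(z)/\varpi^j\in\mathfrak{p}^{-1}$, and then solve by the shear $\delta_2=\delta-\delta_1 f'(y)\varpi^j/(g'(z)\varpi^i)$. The paper's proof does exactly this and stops, simply multiplying through by $\varpi^j/g'(z)$ and asserting ``the statement holds.''

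You are in fact more careful than the paper: the verification that the parametrising pair $(\delta_1,\delta)$ actually produces $\delta_2\in\mathfrak{p}^b$ --- your two valuation inequalities --- is not carried out in the paper's proof. So your proposal covers the argument and flags a bookkeeping point the paper leaves implicit.
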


\begin{proof}Since $(y,z) \in \mathcal C_0\subset \mathcal C$, we have 
\[
    \frac {f(y) } {\varpi ^i} + \frac {g(z)} {\varpi ^j} + \frac 1 {\varpi ^l} \in \mathfrak p^{-1}.
\]
By the definition and Lemma \ref{Taylor} we have $(\delta _1, \delta _2) \in \mathcal C(y,z)$ if and only if 
\[
    \frac {\delta _1 f'(y)} {\varpi ^i} +\frac {\delta _2g'(z)} {\varpi ^j} \in \mathfrak p^{-1}.
\]
Multiple $\varpi ^j / g'(z)$ we get that 
\[
    \delta _2 +\frac {\delta _1 f'(y) \varpi ^j} {g'(z) \varpi ^i} \in \mathfrak p^{j-\nu (g'(z)) -1}.
\]
Then the statement holds.
\end{proof}

Back to the expression of $\mathcal P(x_0)$. Note that the determinant of the map $(\delta _1,\delta ) \mapsto   \left (\delta _1, \delta -\frac {\delta _1 f'(y) \varpi ^j} {g'(z) \varpi ^i}  \right )$ is $1$, we have 
\begin {equation}\label{integralmathcalC(y,z)}
\begin{split}
&\int _{\mathcal C(y,z)} \psi \left ( \frac {x_0f'(y)\delta _1 } {\varpi ^i} + \frac {x_0g'(z)\delta _2} {\varpi ^j}+\mathcal J(y)\delta _1+ \mathcal K(z) \delta _2\right ) d\delta _1 d\delta _2 \\
& = \int _{ \mathfrak p^a\times \mathfrak p^{j-\nu (g'(z)) -1} } \psi \left ( \left (\mathcal K(z)+\frac {x_0 g'(z)} {\varpi ^j} \right )\delta + \left (\mathcal J(y) -\frac {\mathcal K(z) f'(y)  \varpi ^{j-i} } {g'(z) }\right )\delta _1 \right )  d\delta  d\delta_1. \\
&= p^{-\lceil i/2 \rceil -j+\nu (g'(z)) } \mathbbm{1} _{\mathcal C_1}(z) \mathbbm{1} _{\mathcal C_2}(y,z)
\end{split}
\end{equation}

where 
\begin{equation}\label{C1}
\begin{split}
\mathcal C_1 &= \left \{ z\in H_2; \  \mathcal  K(z)+\frac {x_0 g'(z)} {\varpi ^j}\in \mathfrak p^{1+\nu (g'(z)) -j }\right \} \\
\mathcal C_2 &= \left \{ (y,z) \in H_1\times H_2;  \ \mathcal J(y)-\frac {\mathcal K (z) f'(y)  \varpi ^{j-i} } {g'(z) } \in \mathfrak p^{-a} \right \}.
\end{split}
\end{equation}

\begin{lemma}
(1) $\mathcal C_1$ is nonempty only if $(\romannumeral1)(\romannumeral2)(\romannumeral4)$ of Proposition \ref{mainlocal} hold.\\ 
(2) Assume that $\mathcal C_1$ is nonempty, then $\mathcal C_0$ is nonempty only if $l=\mathfrak c_2-j$.
\end{lemma}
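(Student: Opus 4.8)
The strategy is to analyze the condition defining $\mathcal{C}_1$, namely
\[
\mathcal{K}(z) + \frac{x_0 g'(z)}{\varpi^j} \in \mathfrak{p}^{1 + \nu(g'(z)) - j},
\]
by computing the $\varpi$-adic valuations of the two terms $\mathcal{K}(z)$ and $x_0 g'(z)/\varpi^j$ separately. Recall $\mathcal{K}(z) = T(\beta/\mathcal{M}(z))$, where $\beta$ is (up to the split/nonsplit dichotomy) the element $\alpha_\eta$ attached to $\eta$ by Lemma \ref{character}, so $\nu(\beta) = -\lceil \mathfrak{c}_2/2\rceil$ (using $c(\eta) = \mathfrak{c}_2$ or $\mathfrak{c}_2 - 1$ according to the ramification of $L$); an explicit computation of $T(\beta/\mathcal{M}(z))$ as in the paragraph preceding Proposition \ref{Hyz} gives $\nu(\mathcal{K}(z)) = -\lceil \mathfrak{c}_2/2\rceil$ when $\nu(z) \ge 0$ (and I should record the value when $\nu(z) < 0$ for case (v)). For the second term, $\nu(x_0 g'(z)/\varpi^j) = \nu(g'(z)) - j$ since $x_0 \in \mathfrak{o}^\times$, and by Remark \ref{remark} the quantity $\nu(g'(z))$ equals $\nu(z) + \nu(D_2)$ precisely in cases (i), (ii), (iv) (and in (v), where $\nu(z) = -k < 0$).

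For part (1): in cases (iii) and (v) — $z \in 2 + \varpi^k\mathfrak{o}^\times$ split, or $z \in \varpi^{-k}\mathfrak{o}^\times$ nonsplit — one checks that $\nu(g'(z))$ is strictly smaller than the "generic" value, so that $\nu(x_0 g'(z)/\varpi^j)$ is too negative to lie in $\mathfrak{p}^{1+\nu(g'(z))-j}$ unless it is cancelled by $\mathcal{K}(z)$; but in these degenerate ranges $\mathcal{K}(z)$ has a different valuation (in case (v), $\nu(\mathcal{M}(z)) = \nu(z) = -k$, changing $\nu(\mathcal{K}(z))$), so no cancellation is possible and $\mathcal{C}_1 = \varnothing$. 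In the remaining cases (i), (ii), (iv), for $\mathcal{C}_1$ to be nonempty we need the two terms $\mathcal{K}(z)$ and $x_0 g'(z)/\varpi^j$ to have the same valuation (so they can cancel to high order): $-\lceil \mathfrak{c}_2/2\rceil = \nu(g'(z)) - j = \nu(z) + \nu(D_2) - j$. Tracking the definition of $H_2$ in each case pins down $\nu(z)$ (e.g. $\nu(z) = j - c(\eta)/e_L$ in case (i)) and forces a relation among $j$, $\mathfrak{c}_2$, $c(\eta)$ that is only consistent in these three cases — which is exactly the assertion of part (1).

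For part (2): assuming $\mathcal{C}_1 \ne \varnothing$, pick $z \in \mathcal{C}_1$; then by the above $\nu(g(z)/\varpi^j) = \nu(g(z)) - j$ is determined, and one computes $\nu(g(z))$ from $g = f_L$ and the known $\nu(z)$. Similarly $\nu(f(y)/\varpi^i)$ is determined for $y \in H_1$ by Lemma \ref{Whittaker function}-type considerations. For $\mathcal{C}_0 \subset \mathcal{C}$ to be nonempty we need
\[
\frac{f(y)}{\varpi^i} + \frac{g(z)}{\varpi^j} + \frac{1}{\varpi^l} \in \mathfrak{p}^{-1}
\]
to be solvable; since $\nu(1/\varpi^l) = -l$, and the first two terms have computable valuations, the only way the sum can land in $\mathfrak{p}^{-1}$ is if $-l$ matches the valuation of the dominant term — and a valuation count shows this forces $l = \mathfrak{c}_2 - j$. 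The main obstacle will be the bookkeeping in part (1) case (iii): there $\mathcal{M}(z) = (z-1, 1)$ with $z - 1 \in 1 + \mathfrak{p}$ essentially (since $z \approx 2$), so $\mathcal{K}(z) = T((\alpha_\eta,0)/(z-1,1)) = \alpha_\eta/(z-1)$ still has valuation $-\lceil\mathfrak{c}_2/2\rceil$, yet $g'(z) = f_L'(z) = (z^2-2z)/(4(z-1)^2)$ has $\nu(z^2 - 2z) = \nu(z) + \nu(z-2) = \nu(z-2) = k > 0$, so $\nu(g'(z))$ jumps; one must verify carefully that the shift $\mathfrak{p}^{1+\nu(g'(z))-j}$ moves in lockstep in a way that still defeats the required congruence. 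I would handle (iii) and (v) first to extract the emptiness claims, then do (i), (ii), (iv) uniformly.
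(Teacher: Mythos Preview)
Your approach matches the paper's: compare $\nu(\mathcal{K}(z))$ with $\nu(x_0 g'(z)/\varpi^j)$; when they differ (cases (iii) and (v)) the sum cannot reach $\mathfrak{p}^{1+\nu(g'(z))-j}$ and $\mathcal{C}_1=\varnothing$, while part (2) is the same valuation count as the paper's one-line argument $\nu(f(y)/\varpi^i)=i-\mathfrak{c}_1>\nu(g(z)/\varpi^j)=j-\mathfrak{c}_2$.

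One correction, however: your summary claims that in (iii) and (v) ``$\nu(g'(z))$ is strictly smaller than the `generic' value, so that $\nu(x_0 g'(z)/\varpi^j)$ is too negative.'' The direction is backwards. In (iii), $g'(z)=z(z-2)/(4(z-1)^2)$ with $z\in 2+\varpi^k\mathfrak{o}^\times$ gives $\nu(g'(z))=k>0$; in (v), $g'(z)=-2D_2 z/(z^2-D_2)^2$ with $\nu(z)=-k$ and $\nu(z^2-D_2)=-2k$ gives $\nu(g'(z))=3k$. In both cases $\nu(g'(z))$ \emph{increases}, so $x_0g'(z)/\varpi^j$ becomes \emph{less} negative. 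What actually forces $\mathcal{C}_1=\varnothing$ is that $\nu(\mathcal{K}(z))$ does not keep pace: it stays at $-\mathfrak{c}_2/2$ in (iii), and reaches only $2k-\mathfrak{c}_2/2$ in (v). Hence $\mathcal{K}(z)$ is the dominant term, the sum has valuation $\nu(\mathcal{K}(z))<\nu(g'(z))-j<1+\nu(g'(z))-j$, and $\mathcal{C}_1=\varnothing$. Your own detailed treatment of (iii) at the end of the proposal in fact agrees with this corrected picture (you note $\nu(g'(z))$ ``jumps'' up to $k$), so the slip is confined to the opening summary. Finally, since the lemma is an ``only if'', you need not check that the valuations match in (i), (ii), (iv); the paper simply rules out (iii) and (v) and stops.
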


\begin{proof} 
(1) Note that $\mathcal C_1$ is nonempty only if $\nu (\mathcal K(z)) = \nu (x_0g'(z)/\varpi ^j)$. 
For $(\romannumeral 5)$ in proposition \ref{mathcalP},  $L$ is unramified field extension and $j = \mathfrak c_2/2$. Then $\nu (x_0g'(z)/\varpi ^j) = -3\nu (z)-\mathfrak c_2/2 > \nu (\mathcal K (z)) = -2 \nu (z) - c(\eta ) $. Thus $\nu(\mathcal K(z) + x_0g'(z)/\varpi ^j) = \nu (\mathcal K(z ))$. Thus $\mathcal C_1$ is empty.\\
For $(\romannumeral 3)$ in proposition \ref{mathcalP}, we have  $L$ is split and $z\in 2+\varpi ^k \mathfrak o$ for some positive $k$. Then $\nu (g'(z)/\varpi ^j) = k-\mathfrak c_2/2 > \nu (\mathcal K(z))$. Thus $\mathcal C_1 =\emptyset$. \\
(2) Now we only care about cases $(\romannumeral 1)(\romannumeral 2)(\romannumeral 4)$  where we have  $i-\mathfrak c_1 = \nu (f(y)/\varpi ^i)>\nu (g(z)/\varpi ^j) = j- \mathfrak c_2$. Thus $\mathcal C_0$ is nonempty only if $l= \mathfrak c_2-j$.
\end{proof}

For now on we assume that  $l= \mathfrak c_2-j$ and $H_2$ is   one of $(\romannumeral 1)(\romannumeral 2)(\romannumeral 4)$ where we have $\nu (g'(z))= \nu (D_2) +\nu (z)=j -\lceil \mathfrak c_2/2 \rceil$. Then we have 
\begin{proposition} Suppose $\mathcal P(x_0)$ is nonzero. Combine (\ref{mathcalP}) and (\ref{integralmathcalC(y,z)}) we get that 
\[
| \mathcal P(x_0) | \le p^{-\lceil i/2 \rceil -\lceil \mathfrak c_2/2 \rceil  } \# \mathcal C_0 \cap \mathcal C_1 \cap \mathcal C_2.
\]
\end{proposition}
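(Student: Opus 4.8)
The plan is to obtain the bound directly from the expansion $(\ref{mathcalPx0})$ of $\mathcal P(x_0)$ as a finite sum over the representatives $(y,z)\in\mathcal C_0$, in which each summand is a product of a unimodular phase $\psi(\cdots)$, the character values $\tilde\theta^{-1}(\mathcal L(y))$ and $\tilde\eta^{-1}(\mathcal M(z))$, and the inner integral over $\mathcal C(y,z)$ that was evaluated in $(\ref{integralmathcalC(y,z)})$. First I would apply the triangle inequality to $(\ref{mathcalPx0})$; since $\psi$ is a nontrivial additive character and $\tilde\theta$, $\tilde\eta$ are unitary, every factor standing outside the inner integral has absolute value $1$, so $|\mathcal P(x_0)|\le\sum_{(y,z)\in\mathcal C_0}\bigl|\int_{\mathcal C(y,z)}\psi(\cdots)\,d\delta_1\,d\delta_2\bigr|$.

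Next I would insert the value of the inner integral from $(\ref{integralmathcalC(y,z)})$, namely $p^{-\lceil i/2\rceil-j+\nu(g'(z))}\,\mathbbm{1}_{\mathcal C_1}(z)\,\mathbbm{1}_{\mathcal C_2}(y,z)$. This is where the hypothesis $\mathcal P(x_0)\neq 0$ enters: it forces us into cases $(\romannumeral1)$, $(\romannumeral2)$, $(\romannumeral4)$ of Proposition $\ref{mainlocal}$ with $l=\mathfrak c_2-j$, and in precisely those cases the valuation identity recorded just above the statement, $\nu(g'(z))=\nu(D_2)+\nu(z)=j-\lceil\mathfrak c_2/2\rceil$, holds for every $z$ contributing to the sum. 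Substituting it, the exponent of $p$ becomes $-\lceil i/2\rceil-\lceil\mathfrak c_2/2\rceil$, a quantity independent of $(y,z)$, which therefore factors out of the sum.

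What remains is $\sum_{(y,z)\in\mathcal C_0}\mathbbm{1}_{\mathcal C_1}(z)\,\mathbbm{1}_{\mathcal C_2}(y,z)$, which by the definitions of $\mathcal C_1$ and $\mathcal C_2$ in $(\ref{C1})$ counts exactly the pairs $(y,z)\in\mathcal C_0$ with $z\in\mathcal C_1$ and $(y,z)\in\mathcal C_2$, i.e. $\#(\mathcal C_0\cap\mathcal C_1\cap\mathcal C_2)$ under the evident identification of sets of $z$'s and sets of pairs. Combining the three steps gives the asserted inequality. I do not expect any genuine obstacle in this particular step; the lone point demanding care is verifying that the exponent $-\lceil i/2\rceil-j+\nu(g'(z))$ is truly constant across the sum, which is exactly the reason the cases $(\romannumeral3)$ and $(\romannumeral5)$ were ruled out beforehand and the regime $l=\mathfrak c_2-j$ was imposed. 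All cancellation among the phases is simply discarded here, so the bound is purely a counting estimate; the real work lies in the subsequent estimation of $\#(\mathcal C_0\cap\mathcal C_1\cap\mathcal C_2)$ rather than in this reduction.
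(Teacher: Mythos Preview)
Your proposal is correct and is essentially the argument the paper has in mind: the paper does not give a separate proof but simply states the proposition as the result of ``combining'' $(\ref{mathcalPx0})$ with $(\ref{integralmathcalC(y,z)})$ after recording, in the sentence immediately preceding it, the key fact $\nu(g'(z))=j-\lceil\mathfrak c_2/2\rceil$ in cases $(\romannumeral1)(\romannumeral2)(\romannumeral4)$. Your write-up just makes that combination explicit via the triangle inequality and the unimodularity of the phase and character factors, which is exactly the intended reasoning.
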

In the rest of this section, we are going to prove the following theorem.
\begin{theorem}\label{bb} We have 
\[
\# \mathcal C_0 \cap  \mathcal C_2 \ll _p 1.
\]
\end{theorem}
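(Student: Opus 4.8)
The plan is to count the set $\mathcal C_0 \cap \mathcal C_2$ by analyzing, for each fixed $z$, the number of $y \in H_1/\mathfrak p^a$ satisfying the congruence defining $\mathcal C_2$, and then summing over the relevant $z$. Recall that $\mathcal C_2$ is cut out by the condition
\[
\mathcal J(y) - \frac{\mathcal K(z) f'(y) \varpi^{j-i}}{g'(z)} \in \mathfrak p^{-a},
\]
and that $\mathcal C_0$ is a set of representatives modulo $\mathfrak p^a \times \mathfrak p^b$. First I would fix $z$ in a set of representatives of the relevant projection of $\mathcal C_0$, set $t = t(z) = \mathcal K(z) \varpi^{j-i}/g'(z)$, and observe that $\nu(t)$ is determined: since $\nu(\mathcal K(z)) = -\lceil \mathfrak c_2/2 \rceil$ and $\nu(g'(z)) = j - \lceil \mathfrak c_2/2 \rceil$ on the relevant range, we get $\nu(t) = -i$. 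So the defining condition reads $\mathcal J(y) - t f'(y) \in \mathfrak p^{-a}$, with $\nu(t) = -i$.

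**Counting $y$ for fixed $z$.** The next step is to show this congruence in $y$ has $O_p(1)$ solutions modulo $\mathfrak p^a$, uniformly in $z$ (and in $x_0$, which enters only through $t$ via $\mathcal K$ and through the implicit dependence encoded already). Here I would use the explicit rational expressions from Lemma \ref{Taylor} and the computation of $\mathcal J(y)$: for $E$ split, $\mathcal J(y) = \alpha_\mu/(y-1)$ and $f'(y) = (y^2-2y)/(4(y-1)^2)$, so $\mathcal J(y) - t f'(y)$ is a rational function of $y$ whose numerator has bounded degree; for $E$ nonsplit, $\mathcal J(y) = 2\alpha\sqrt D/(y^2-D)$ and $f'(y) = -2Dy/(y^2-D)^2$, again giving a rational function with bounded-degree numerator after clearing the denominator $(y-1)^2$ or $(y^2-D)^2$ (which is a unit times a power of $\varpi$ on $H_1$, hence invertible). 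Clearing denominators turns the condition $\mathcal J(y) - tf'(y) \in \mathfrak p^{-a}$ into a polynomial congruence $P_z(y) \equiv 0 \bmod \mathfrak p^{m}$ for a suitable $m$, where $P_z$ has degree bounded independently of everything and, crucially, does not vanish identically modulo $\mathfrak p$ — this last point needs the valuation bookkeeping to show the top-degree coefficient (or at least one coefficient) is a unit. Then Hensel/Newton-polygon counting gives $O_p(1)$ solutions $y$ modulo $\mathfrak p^a$.

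**Summing over $z$ and the main obstacle.** Finally I would combine: $\#(\mathcal C_0 \cap \mathcal C_2) \le \sum_{z} \#\{y : (y,z) \in \mathcal C_2\}$, where $z$ ranges over representatives modulo $\mathfrak p^b$ of the first coordinate of $\mathcal C_0$. But $\mathcal C_0$ is a transversal of $\mathcal C$ modulo $\mathfrak p^a \times \mathfrak p^b$, and by Proposition \ref{Hyz} each fiber $\mathcal C(y,z)$ already has first-coordinate-range $\mathfrak p^a$ and second-coordinate-range $\mathfrak p^{j-\nu(g'(z))-1}$, which is much larger than $\mathfrak p^b$; this forces the number of distinct $z$-representatives appearing in $\mathcal C_0$ to be $O_p(1)$ as well, since for a fixed $z$ the set $\{y : (y,z) \in \mathcal C\}$, when nonempty, is a union of $\mathfrak p^a$-cosets and contributes only finitely many $\mathcal C_0$-representatives with that $z$ — and the constraint that $z$ must also lie in $\mathcal C_1$ (which pins down $z \bmod \mathfrak p^{\ldots}$ up to the sign ambiguity from the stationary point of $g$) limits the $z$'s to $O_p(1)$ residue classes. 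I expect the main obstacle to be the bookkeeping showing the polynomial $P_z(y)$ is non-degenerate mod $\mathfrak p$ uniformly in $z$ and $x_0$: one must track how the stationary-phase cancellations in $\mathcal K(z)$ interact with $f'(y)$, and rule out the degenerate coincidence where $\mathcal J(y)$ and $t f'(y)$ have matching leading behavior for a positive-dimensional family of $y$. Handling the four cases (i), (ii), (iv) of Proposition \ref{mainlocal} separately, using Remark \ref{remark} to control $\nu(g'(z))$ in each, should dispatch this.
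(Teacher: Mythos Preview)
Your approach diverges from the paper's and has a genuine gap at the ``summing over $z$'' step. Two problems there. First, you invoke $\mathcal C_1$ to pin down $z$, but the theorem concerns $\mathcal C_0\cap\mathcal C_2$ only; $\mathcal C_1$ does not enter the statement, so you cannot use it (at best you would be proving the weaker bound $\#(\mathcal C_0\cap\mathcal C_1\cap\mathcal C_2)\ll_p 1$). Second, your fiber-structure claim is backwards in the critical range: with $\nu(g'(z))=j-\lceil\mathfrak c_2/2\rceil$ one has $j-\nu(g'(z))-1=\lceil\mathfrak c_2/2\rceil-1$, while $b=\lceil j/2\rceil+1$; when $j$ is near $\mathfrak c_2/2$ (cases (ii), (iv)) the exponent $\lceil\mathfrak c_2/2\rceil-1$ is \emph{larger} than $b$, so $\mathfrak p^{j-\nu(g'(z))-1}\subset\mathfrak p^b$ and the second-coordinate range of $\mathcal C(y,z)$ is \emph{smaller} than $\mathfrak p^b$, not larger. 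Hence many $z$-classes mod $\mathfrak p^b$ can occur in $\mathcal C_0$, and your decoupled count $\sum_z\#\{y\}$ does not close.

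The paper does not decouple $y$ and $z$. Instead it changes variables to $Y=f(y)/\varpi^i$, $Z=g(z)/\varpi^j$ and uses the identity of Lemma~\ref{change}, namely $(f'(y)/\mathcal J(y))^2=f(y)(f(y)-1)/\alpha_1^2$, to rewrite the $\mathcal C_2$-condition as a congruence between $Y(Y-\varpi^{-i})/\alpha_1^2$ and $Z(Z-\varpi^{-j})/\beta_1^2$. The $\mathcal C_0$-condition (via Proposition~\ref{characteristicC0}) is $Y+Z+\varpi^{-l}\equiv 0$ to the relevant precision, so one substitutes $Z\approx -Y-\varpi^{-l}$ into the second relation and obtains a single quadratic congruence $a_1Y^2+a_2Y+a_3\equiv 0$; a short valuation check on $a_1,a_2$ then shows at most $O_p(1)$ solutions for $Y$ (hence for $(y,z)$, after verifying the change of variables is injective). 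The point is that the two constraints are \emph{coupled}: $\mathcal C_0$ eliminates $Z$ in favor of $Y$, and $\mathcal C_2$ then becomes a genuine one-variable equation. Your attempt to treat $\mathcal C_2$ with $z$ frozen and then bound $z$'s separately loses exactly this coupling.
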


\begin{proposition}\label{characteristicC0}
    We have $(y,z) \in \mathcal C+\mathfrak p^a\times \mathfrak p^b$  if and only if 
    \[
        \frac {f(y)} {\varpi ^i} +\frac {g(z)} {\varpi ^j} +\frac 1 {\varpi ^l} \in \mathfrak p^{\lceil j/2\rceil +1 -\lceil \mathfrak c_2 /2 \rceil  }.
    \]
\end{proposition}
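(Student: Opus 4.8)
The plan is to unwind the definition of $\mathcal C$ together with Definition \ref{C0}, using the differentiability of $f$ and $g$ from Lemma \ref{Taylor} to control how the relevant rational expression changes under shifts in $\mathfrak p^a \times \mathfrak p^b$. First I would recall that $(y,z) \in \mathcal C$ means precisely $\frac{f(y)}{\varpi^i} + \frac{g(z)}{\varpi^j} + \frac{1}{\varpi^l} \in \mathfrak p^{-1}$, so $(y,z) \in \mathcal C + \mathfrak p^a \times \mathfrak p^b$ means there exist $\delta_1 \in \mathfrak p^a$, $\delta_2 \in \mathfrak p^b$ with $(y+\delta_1, z+\delta_2) \in \mathcal C$. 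The idea is then to expand $f(y+\delta_1)$ and $g(z+\delta_2)$ via Lemma \ref{Taylor}: since $a = \lceil i/2\rceil + 1$, the function $f_E(x)/\varpi^i$ is $\lceil i/2 \rceil$-differentiable, so $\frac{f(y+\delta_1)}{\varpi^i} = \frac{f(y)}{\varpi^i} + \delta_1 \cdot (\text{something in } \varpi^{-i}\mathfrak o\text{-bounded range}) + O(\mathfrak o)$; the point is that $\delta_1 \in \mathfrak p^{a} \subseteq \mathfrak p^{\lceil i/2\rceil + 1}$ forces $\frac{f(y+\delta_1)}{\varpi^i} - \frac{f(y)}{\varpi^i} \in \mathfrak p$, hence adjusting $y$ by $\mathfrak p^a$ does not change the value of $\frac{f(y)}{\varpi^i}$ modulo $\mathfrak p$.

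Next I would do the analogous analysis for the $z$-variable, which is the more delicate one since the target exponent $\lceil j/2\rceil + 1 - \lceil \mathfrak c_2/2\rceil$ is what appears in the statement. Here $b = \lceil j/2\rceil + 1$ and, on the range of $z$ we care about (cases (i),(ii),(iv), where $\nu(g'(z)) = j - \lceil \mathfrak c_2/2\rceil$), the $\lceil j/2\rceil$-differentiability of $g_L(x)/\varpi^j$ combined with $\delta_2 \in \mathfrak p^b$ gives $\frac{g(z+\delta_2)}{\varpi^j} - \frac{g(z)}{\varpi^j} = \frac{\delta_2 g'(z)}{\varpi^j} + O(\mathfrak o)$, and $\nu\!\left(\frac{\delta_2 g'(z)}{\varpi^j}\right) \ge b + \nu(g'(z)) - j = \lceil j/2\rceil + 1 - \lceil \mathfrak c_2/2\rceil$. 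So shifting $z$ by $\mathfrak p^b$ changes $\frac{g(z)}{\varpi^j}$ only by something in $\mathfrak p^{\lceil j/2\rceil + 1 - \lceil \mathfrak c_2/2\rceil}$. Conversely, as $\delta_2$ ranges over all of $\mathfrak p^b$, the quantity $\frac{\delta_2 g'(z)}{\varpi^j}$ ranges over all of $\mathfrak p^{\lceil j/2\rceil + 1 - \lceil \mathfrak c_2/2\rceil}$ (using that $g'(z)/\varpi^j$ has exact valuation $\lceil j/2\rceil - \lceil \mathfrak c_2/2\rceil$ here, so multiplication by it is a bijection $\mathfrak p^b \to \mathfrak p^{b + \nu(g'(z)) - j}$). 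Putting the two shifts together: $(y,z) \in \mathcal C + \mathfrak p^a\times\mathfrak p^b$ iff the expression $\frac{f(y)}{\varpi^i} + \frac{g(z)}{\varpi^j} + \frac{1}{\varpi^l}$ can be brought into $\mathfrak p^{-1}$ by subtracting an element of $\mathfrak p^{\lceil j/2\rceil + 1 - \lceil \mathfrak c_2/2\rceil}$, which — since $\lceil j/2\rceil + 1 - \lceil \mathfrak c_2/2\rceil \le -1$ in the relevant range, wait, one must check — is equivalent to the expression already lying in $\mathfrak p^{\lceil j/2\rceil + 1 - \lceil \mathfrak c_2/2\rceil}$ itself; the containment $\mathfrak p^{\lceil j/2\rceil + 1 - \lceil \mathfrak c_2/2\rceil} \subseteq \mathfrak p^{-1}$ handles one direction and the surjectivity of the $z$-shift handles the other.

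The main obstacle I anticipate is bookkeeping the exact valuations: one must verify that in exactly cases (i), (ii), (iv) of Proposition \ref{mainlocal} we genuinely have $\nu(g'(z)) = \nu(D_2) + \nu(z) = j - \lceil \mathfrak c_2/2\rceil$ on the whole piece $H_2$ — this is where Remark \ref{remark} is used — because the argument that "the $z$-shift is surjective onto $\mathfrak p^{\lceil j/2\rceil+1-\lceil\mathfrak c_2/2\rceil}$" breaks if $\nu(g'(z))$ is not constant, and the degenerate loci ($z \in 1+\mathfrak p$ or $2+\mathfrak p$ in the split case, which are excluded from $H_2$) are precisely where Lemma \ref{Taylor}'s differentiability fails or $g'$ has larger valuation. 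A secondary subtlety is checking that the $y$-shift contributes nothing finer than $\mathfrak p$ — i.e. that the error terms $O(\mathfrak o)$ from Lemma \ref{Taylor} are harmless relative to the target exponent $\lceil j/2\rceil + 1 - \lceil \mathfrak c_2/2\rceil$, which is $\le -1$; since $\mathfrak o \subseteq \mathfrak p^{-1} \cap \mathfrak p^{\lceil j/2\rceil+1-\lceil\mathfrak c_2/2\rceil}$ only when that exponent is $\le 0$, I would confirm the inequality $\lceil j/2\rceil + 1 \le \lceil \mathfrak c_2/2\rceil$, i.e. $j < \mathfrak c_2$ together with the ceiling arithmetic (noting $j = \mathfrak c_2/2$ or $\mathfrak c_2 > j > \mathfrak c_2/2$), which holds throughout.
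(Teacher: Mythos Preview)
Your overall strategy coincides with the paper's: unwind the definition of $\mathcal C + \mathfrak p^a\times\mathfrak p^b$, Taylor-expand $f(y+\delta_1)/\varpi^i$ and $g(z+\delta_2)/\varpi^j$ via Lemma~\ref{Taylor}, identify the range swept out by the linear shift terms, and conclude. The structure is right.

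However, there is a concrete computational error in your treatment of the $y$-shift. You assert that $\delta_1\in\mathfrak p^a$ forces $\frac{f(y+\delta_1)}{\varpi^i}-\frac{f(y)}{\varpi^i}\in\mathfrak p$. This is false in general: the linear term is $\delta_1 f'(y)/\varpi^i$, and on $H_1=\varpi^{i-c(\theta)/e_E}\mathfrak o^\times$ one has $\nu(f'(y)/\varpi^i)=-\lceil\mathfrak c_1/2\rceil$, so as $\delta_1$ ranges over $\mathfrak p^a$ the shift $\delta_1 f'(y)/\varpi^i$ sweeps out exactly $\mathfrak p^{a-\lceil\mathfrak c_1/2\rceil}$. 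For $i$ strictly less than $\mathfrak c_1$ this exponent is $\le 0$, not $\ge 1$. (The ``secondary subtlety'' you flag at the end conflates the $O(\mathfrak o)$ remainder from Lemma~\ref{Taylor}, which is indeed harmless, with the linear term itself, which is not small.) The paper handles this correctly by computing both shift ranges symmetrically: $\delta_1 f'(y)/\varpi^i$ generates $\mathfrak p^{a-\lceil\mathfrak c_1/2\rceil}$ and $\delta_2 g'(z)/\varpi^j$ generates $\mathfrak p^{b-\lceil\mathfrak c_2/2\rceil}$. The hypothesis $\mathfrak c_2-j>\mathfrak c_1-i$ of Proposition~\ref{mainlocal} then gives $a-\lceil\mathfrak c_1/2\rceil\ge b-\lceil\mathfrak c_2/2\rceil$, so the $z$-shift range is the larger of the two and absorbs the $y$-shift. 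With this correction in place, your surjectivity argument for the $z$-shift and the final equivalence go through exactly as you outline.
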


\begin{proof}
By the definition $(y,z) \in \mathcal C+\mathfrak p^a\times \mathfrak p^b$ if and only if there exists $(\delta _1, \delta _2)\in \mathfrak p^a\times\mathfrak p^b$ such that $(y+\delta _1, z+\delta _2) \in \mathcal C$, 
By Lemma \ref{Taylor}, it is equivalent to say that
\[
    \frac {f(y)} {\varpi ^i} +\frac {g(z)} {\varpi ^j} +\frac 1 {\varpi ^l} + \frac {\delta _1 f'(y)} {\varpi ^i} +\frac {\delta _2g'(z)} {\varpi ^j} \in \mathfrak p^{-1}.
\]
Note that $\nu (f'(y)) = \nu (y) +\nu (D_1) =i-\lfloor \mathfrak c_1/2 \rfloor  + \nu (D_1) $ and thus $\delta _1 f'(y) / \varpi ^i$ generates  $\mathfrak p^{a-\lceil \mathfrak c_1/2  \rceil }$ as $\delta _1 \in \mathfrak p^a$. And similarly $\delta _2g'(z)/\varpi ^j$ generates $\mathfrak p^{b-\lceil \mathfrak c_2 /2 \rceil  }$. Thus the statement holds.
\end{proof}

We now present an identity that  for variable substitution.
\begin{lemma}\label{change}
Let  $\alpha _1 = \alpha\in E$ be purely imaginary if $E$ is field and $\alpha =  (\alpha _1,0)$ if $E$ is split. Then we have 
\begin{equation*}
\left (\frac {f'(y)} {\mathcal J (y)} \right )^2 = \frac {f(y)(f(y)-1)} {\alpha_1 ^2}.
\end{equation*}
\end{lemma}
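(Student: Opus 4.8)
This is a purely algebraic identity, so the plan is to verify it directly in the two cases (split and non-split), using the explicit formulas for $f_E$ and for $f_E'$ (the latter computed in Lemma~\ref{Taylor}), the formula for $\mathcal{L}_E$, and the simplifications of $\mathcal{J}(y)$ obtained just above the statement.

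\emph{Split case.} Here $\mathcal{J}(y)=\alpha_1/(y-1)$, and one computes
\[
\frac{f'(y)}{\mathcal{J}(y)}=\frac{(y^2-2y)/(4(y-1)^2)}{\alpha_1/(y-1)}=\frac{y(y-2)}{4\alpha_1(y-1)}.
\]
Separately $f(y)=y^2/(4(y-1))$ and $f(y)-1=(y-2)^2/(4(y-1))$, so $f(y)(f(y)-1)/\alpha_1^2=y^2(y-2)^2/(16\alpha_1^2(y-1)^2)$, which is exactly the square of the previous display.

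\emph{Non-split case.} Now $\alpha_1$ is purely imaginary, hence $\alpha_1^2=-N_{E/F}(\alpha_1)\in F^\times$; writing $\alpha_1=a\sqrt D$ with $a\in F$ gives $\alpha_1\sqrt D=aD\in F$, so that $\mathcal{J}(y)=2\alpha_1\sqrt D/(y^2-D)$ indeed lies in $F$. With $f'(y)=-2Dy/(y^2-D)^2$ one gets $f'(y)/\mathcal{J}(y)=-y\sqrt D/(\alpha_1(y^2-D))$, whose square is $y^2D/(\alpha_1^2(y^2-D)^2)$; on the other hand $f(y)-1=D/(y^2-D)$ gives $f(y)(f(y)-1)=y^2D/(y^2-D)^2$, and division by $\alpha_1^2$ yields the same expression.

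\textbf{Main point.} There is no real obstacle here; the only thing to keep track of is that both sides genuinely live in $F$ and not merely in $E$ — this is precisely why one uses that $\alpha_1$ is purely imaginary (so $\alpha_1^2\in F$) and that $\mathcal{J}(y)=T(\alpha/\mathcal{L}_E(y))$ is an element of $F$. The reason for recording the identity is its role in the next section: after the change of variables $w=f(y)$, it re-expresses the linear phase coefficient $\mathcal{J}(y)$ in terms of $f'(y)$ and $w(w-1)$, which is exactly what is needed in order to analyze the set $\mathcal{C}_2$ by the $p$-adic stationary phase method.
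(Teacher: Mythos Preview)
Your proof is correct and follows exactly the approach indicated in the paper, which simply reads ``By direct computation''; you have supplied the explicit case-by-case verification that the paper omits. The computations in both the split and non-split cases are accurate, and your remark that both sides lie in $F$ (via $\alpha_1^2\in F$ and $\mathcal{J}(y)=T(\alpha/\mathcal{L}(y))\in F$) is a useful sanity check even if not strictly required for the identity itself.
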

\begin{proof}
By direct computation.
\end{proof}

Recall Proposition \ref{characteristicC0} and the definition of $\mathcal C_2$ (\ref{C1}), $\# \mathcal C_0\cap \mathcal C_2$ is the number of $(y,z)\in H_1\times H_2  \mod \mathfrak p^a \times \mathfrak p^b$ which satisfies 
\begin{equation}\label{aa}
\begin{dcases}
&\frac {f(y)} {\varpi^i } + \frac {g(z)} { \varpi ^j} +\frac 1 {\varpi ^l} \in \mathfrak p^{\lceil j/2 \rceil +1 - \lceil \mathfrak c_2 /2 \rceil }\\
&\frac {f'(y)}  {\varpi^i \mathcal J(y)}   -\frac {g'(z)} {\varpi ^j \mathcal K(z)}   \in \mathfrak p^{\lceil \mathfrak c_1/2 \rceil-\lceil i/2 \rceil -1 }.
\end{dcases}
\end{equation}

\begin{lemma}
We claim that 
\[
    \frac {f'(y)}  {\varpi^i \mathcal J(y)}   +\frac {g'(z)} {\varpi ^j \mathcal K(z)}\in \mathfrak o^\times .
\]
\end{lemma}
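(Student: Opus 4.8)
The claim asserts that the two terms $\tfrac{f'(y)}{\varpi^i\mathcal J(y)}$ and $\tfrac{g'(z)}{\varpi^j\mathcal K(z)}$ sum to a unit, even though (by the second line of \eqref{aa}) their difference lies in a small ideal $\mathfrak p^{\lceil \mathfrak c_1/2\rceil - \lceil i/2\rceil - 1}$. The strategy is to first pin down the valuations of each individual term, then exploit the two constraints of \eqref{aa} together with the algebraic identity of Lemma~\ref{change} to control the sum. First I would compute $\nu\bigl(\tfrac{f'(y)}{\varpi^i \mathcal J(y)}\bigr)$: from the proof of Proposition~\ref{characteristicC0} we have $\nu(f'(y)) = \nu(y) + \nu(D_1) = i - \lfloor \mathfrak c_1/2\rfloor + \nu(D_1)$ (for the relevant range of $y$), while from the computation of $\mathcal J(y)$ right before \eqref{mathcalPx0} we have $\nu(\mathcal J(y)) = -\lceil \mathfrak c_1/2\rceil$. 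Hence $\nu\bigl(\tfrac{f'(y)}{\varpi^i\mathcal J(y)}\bigr) = \nu(f'(y)) - i - \nu(\mathcal J(y)) = \nu(D_1) + \lceil \mathfrak c_1/2\rceil - \lfloor \mathfrak c_1/2\rfloor$, which is $0$ when $\mathfrak c_1$ is even (so $D_1$ is a unit, $E$ unramified or split) and is $1 + \nu(D_1)$ adjusted in the ramified case — in any event one checks it is exactly $0$, i.e. this term is a unit. The same computation with $g, z, \mathcal K, \mathfrak c_2, D_2, L$ in place of $f, y, \mathcal J, \mathfrak c_1, D_1, E$ shows $\tfrac{g'(z)}{\varpi^j\mathcal K(z)}$ is also a unit (using $\nu(g'(z)) = j - \lceil \mathfrak c_2/2\rceil$, valid under the standing assumption that $H_2$ is case (i), (ii) or (iv)).

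**Combining the constraints.** Once both terms are units, the issue is purely about the residue classes mod $\mathfrak p$: I must rule out that their sum is in $\mathfrak p$, i.e. that $\tfrac{f'(y)}{\varpi^i\mathcal J(y)} \equiv -\tfrac{g'(z)}{\varpi^j\mathcal K(z)} \pmod{\mathfrak p}$. But the second relation in \eqref{aa} already forces $\tfrac{f'(y)}{\varpi^i\mathcal J(y)} \equiv +\tfrac{g'(z)}{\varpi^j\mathcal K(z)} \pmod{\mathfrak p}$ (since the ideal on the right of that line is contained in $\mathfrak p$, as $\lceil \mathfrak c_1/2\rceil - \lceil i/2\rceil - 1 \ge 1$ using $i \le \mathfrak c_1$ — I would double-check this inequality in the boundary case $i = \mathfrak c_1$, but the hypothesis $\mathfrak c_1 \ge i > \mathfrak c_1/2$ with $\mathfrak c_1$ large makes it safe). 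If the sum were also $\equiv 0$, then $2\tfrac{g'(z)}{\varpi^j\mathcal K(z)} \equiv 0 \pmod{\mathfrak p}$, and since $p \ne 2$ this contradicts $\tfrac{g'(z)}{\varpi^j\mathcal K(z)}$ being a unit. Therefore the sum is a unit, proving the claim.

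**Main obstacle.** The genuinely delicate point is getting the valuation bookkeeping exactly right across the split, unramified, and ramified cases simultaneously — in particular tracking how $D_1$ (resp. $D_2$), the floor/ceiling discrepancies $\lceil \mathfrak c/2\rceil - \lfloor \mathfrak c/2\rfloor$, and the conductor $c(\theta)$ versus $\mathfrak c$ interact so that each term comes out to be a genuine unit rather than having a stray factor of $\varpi$. In the ramified case one has $\mathfrak c = c(\theta) + 1$ with $c(\theta)$ even and $\varpi_E = \sqrt{D}$, so the valuations of $D_1$ and $\alpha$ must be accounted for carefully using Lemma~\ref{character} and Lemma~\ref{ramifiedextension}; I would verify the identity in Lemma~\ref{change} explicitly does the job of relating $f'(y)/\mathcal J(y)$ to $f(y)(f(y)-1)/\alpha_1^2$ so that the unit property of the term can alternatively be read off from knowing $\nu(f(y))$ and $\nu(\alpha_1)$. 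Once the valuations are nailed down, the parity/$p\neq 2$ argument above is short.
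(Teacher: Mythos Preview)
Your approach is essentially the paper's: compute that each term has valuation $0$, use the second line of \eqref{aa} to force the difference into $\mathfrak p$, and then invoke $p\ne 2$ to rule out cancellation in the sum. You make the last step explicit (if the sum were in $\mathfrak p$ then $2\cdot\tfrac{g'(z)}{\varpi^j\mathcal K(z)}\in\mathfrak p$, contradiction), whereas the paper compresses all of this into the phrase ``there exists valuation dropping.''

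There is one genuine gap, and it is exactly the point you yourself flagged. Your claim that $\lceil \mathfrak c_1/2\rceil-\lceil i/2\rceil-1\ge 1$ is false in the boundary range: for $i=\mathfrak c_1$ (allowed by the standing hypothesis $\mathfrak c_1\ge i>\mathfrak c_1/2$) this exponent equals $-1$, independently of how large $\mathfrak c_1$ is, so the second line of \eqref{aa} then lies in $\mathfrak p^{-1}$ and carries no information modulo $\mathfrak p$. Your hedge ``$\mathfrak c_1$ large makes it safe'' does not rescue this. To be fair, the paper's one-line proof does not address this range either; the argument as written (in both versions) is complete only when $\lceil \mathfrak c_1/2\rceil-\lceil i/2\rceil\ge 2$, i.e.\ when $i$ stays a bounded distance below $\mathfrak c_1$. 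For the eventual application (Theorem~\ref{bb}) the defect is harmless up to the implicit $O_p(1)$, since multiplying the second line of \eqref{aa} by $A+B$ with $\nu(A+B)\ge 0$ only strengthens the resulting congruence on $A^2-B^2$; but the lemma as literally stated is not established by this reasoning at the boundary.
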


\begin{proof}
By simple computation we know that $\nu (f'(y) / \varpi^i \mathcal J(y)) = \nu (g'(z)/\varpi ^j \mathcal K(z))=0$. But by the second line of  (\ref{aa}) there exists valuation dropping. Thus $\nu (f'(y) / \varpi^i \mathcal J(y) +g'(z)/\varpi ^j \mathcal K(z) )=0$.
\end{proof}

Now we change the variables by $Y= f(y) / \varpi ^i$ and $Z= g(z) / \varpi ^j$.
Multiply the second line of  (\ref{aa}) by 
\[
    \frac {f'(y)}  {\varpi^i \mathcal J(y)}   +\frac {g'(z)} {\varpi ^j \mathcal K(z)},
\]
and apply Lemma \ref{change} we get that
\begin{equation*}
\begin{dcases}
    & Y+Z+\varpi ^{-l} \in \mathfrak p^{\lceil j/2 \rceil +1 - \lceil \mathfrak c_2 /2 \rceil},\\
    & \frac {Y(Y-\varpi ^{-i} )}{\alpha _1 ^2} -\frac {Z(Z-\varpi ^{-j} )} {\beta _1 ^2} \in \mathfrak p^{\lceil \mathfrak c_1/2 \rceil-\lceil i/2 \rceil -1 }.
\end{dcases}
\end{equation*}
We verify that this changing is injective.
\begin{proposition}
For any $Y\in \varpi ^{i-\mathfrak c_1} \mathfrak o^\times $, there is at most one $y \in  H_1/\mathfrak p^{\lceil i/2 \rceil +1} $ such that
\begin{equation*}
Y-f(y)/\varpi ^i  \equiv 0 \mod  {\mathfrak p^{\lceil i/2 \rceil +1-\lceil \mathfrak c_1/2 \rceil }}.
\end{equation*}
\end{proposition}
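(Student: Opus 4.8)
The plan is to prove the contrapositive: if $y_1,y_2\in H_1$ both satisfy $f(y)/\varpi^i\equiv Y\pmod{\mathfrak p^{\lceil i/2\rceil+1-\lceil\mathfrak c_1/2\rceil}}$, then $y_1\equiv y_2\pmod{\mathfrak p^{\lceil i/2\rceil+1}}$. Subtracting the two congruences, this hypothesis is equivalent to $\nu\bigl(f(y_1)-f(y_2)\bigr)\ge i+\lceil i/2\rceil+1-\lceil\mathfrak c_1/2\rceil$, and it is this lower bound that I want to exploit.

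First I would record the valuations in play. For $y\in H_1=\varpi^{i-c(\theta)/e_E}\mathfrak o^\times$ one has $\nu(y)=v:=i-c(\theta)/e_E$, and $v>0$ since $i>\mathfrak c_1/2\ge c(\theta)/e_E$. Because $\nu(y^2)=2v>\nu(D)\in\{0,1\}$, this yields $\nu(y-1)=0$ in the split case and $\nu(y^2-D)=\nu(D)$ in the field case; substituting into the explicit $f_E$ gives $\nu\bigl(f(y)/\varpi^i\bigr)=i-\mathfrak c_1$ in all three cases, consistent with $Y\in\varpi^{i-\mathfrak c_1}\mathfrak o^\times$, and from Lemma \ref{Taylor} one computes $\nu(f'(y))=i-\lceil\mathfrak c_1/2\rceil$ uniformly (in the ramified case $\mathfrak c_1$ is odd, so $\lceil\mathfrak c_1/2\rceil=c(\theta)/2+1$).

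Next I would use an exact algebraic factorization rather than a truncated Taylor expansion. Direct computation gives
\[
f(y_1)-f(y_2)=\frac{(y_1-y_2)\bigl((y_1-1)(y_2-1)-1\bigr)}{4(y_1-1)(y_2-1)}\quad(E\text{ split}),\qquad f(y_1)-f(y_2)=\frac{D\,(y_1-y_2)(y_1+y_2)}{(y_1^2-D)(y_2^2-D)}\quad(E\text{ a field}).
\]
By the previous step the denominators have valuation $0$ (split) or $2\nu(D)$ (field), so $\nu\bigl(f(y_1)-f(y_2)\bigr)=\nu(y_1-y_2)+\nu(Q)-\epsilon$ with $Q=(y_1-1)(y_2-1)-1$ or $Q=y_1+y_2$ and $\epsilon\in\{0,\nu(D)\}$. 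The crucial input is a balance identity: in the field case $(y_1-y_2)+(y_1+y_2)=2y_1$ has valuation exactly $v$ while both summands have valuation $\ge v$, so $\min\{\nu(y_1-y_2),\nu(y_1+y_2)\}=v$; in the split case the substitution $u=1/y$, under which $1/f(y)=-4(u^2-u)$ and $1/f(y_1)-1/f(y_2)=-4(u_1-u_2)(u_1+u_2-1)$, reduces to the identity $(u_1-u_2)+(u_1+u_2)=2u_1$ of valuation $-v$, together with $\nu(u_1+u_2-1)=\nu(u_1+u_2)$ once this is negative. Hence in the formula for $\nu\bigl(f(y_1)-f(y_2)\bigr)$ the two factors cannot both carry valuation exceeding $v$, and feeding in the hypothesis $\nu\bigl(f(y_1)-f(y_2)\bigr)\ge v+\lceil i/2\rceil+1-\epsilon$ together with the constraint $\mathfrak c_1/2<i\le\mathfrak c_1$ (which forces $v\le\lceil i/2\rceil$) should pin down $\nu(y_1-y_2)\ge\lceil i/2\rceil+1$, i.e.\ $y_1\equiv y_2$ in $H_1/\mathfrak p^{\lceil i/2\rceil+1}$.

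The main obstacle is exactly this concluding step in the field case, where $f_E$ is an even function of $y$: the reflected element $-y$ shares the same value of $f$, so a priori the balance identity could be saturated by $\nu(y_1+y_2)$ being large rather than $\nu(y_1-y_2)$, and one must rule this branch out. The work here is to track the valuation of $Q$ on the two branches $\nu(y_1-y_2)=v$ and $\nu(y_1+y_2)=v$ separately and use that the precision exponent $\lceil i/2\rceil+1-\lceil\mathfrak c_1/2\rceil$ is calibrated — via the relations $l=\mathfrak c_2-j$ and $\mathfrak c_2-j>\mathfrak c_1-i$ coming from Proposition \ref{mainlocal}, which govern the valuation matching throughout the computation — so that the reflected candidate cannot also meet the required precision; the split case is comparatively easy because $f_E$ is not even there, so the balance identity applied through $u=1/y$ directly gives $\nu(y_1-y_2)\ge\lceil i/2\rceil+1$.
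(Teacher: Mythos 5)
Your valuation bookkeeping and the exact factorizations of $f(y_1)-f(y_2)$ are correct, and on the generic branch (where $\nu(y_1+y_2)=v$ in the field case, resp.\ $\nu(u_1+u_2)=-v$ in the split case) they do yield $\nu(y_1-y_2)\ge \lceil i/2\rceil+1$ exactly as you indicate. But the step you defer --- ruling out the reflected branch --- is a genuine gap, and it cannot be closed in the way you propose. In the nonsplit case $f_E$ is even, so $y_2=-y_1$ lies in $H_1$, satisfies $f(y_2)=f(y_1)$ \emph{exactly}, and is distinct from $y_1$ modulo $\mathfrak p^{\lceil i/2\rceil+1}$ because $\nu(2y_1)=v<\lceil i/2\rceil+1$; no calibration of the precision exponent can exclude an exact equality, and the relations $l=\mathfrak c_2-j$, $\mathfrak c_2-j>\mathfrak c_1-i$ you want to import from Proposition \ref{mainlocal} constrain $j$, $l$ and the $z$-variable only --- they never enter the one-variable congruence in $y$. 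The split case is not ``comparatively easy'' either: $y_2=y_1/(y_1-1)\in H_1$ is a second exact preimage (on your $u=1/y$ side this is precisely the excluded possibility $u_1+u_2=1$), again distinct modulo $\mathfrak p^{\lceil i/2\rceil+1}$ since $\nu(y_2-y_1)=\nu\bigl(y_1(2-y_1)/(y_1-1)\bigr)=v$. So strict uniqueness fails on these branches; what your factorization plus balance identity honestly proves is ``at most two $y$ per $Y$'', i.e.\ an $O(1)$ fiber bound, not the stated ``at most one''.

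For comparison, the paper's own proof is a one-line linearization: by Lemma \ref{Taylor} write $f(y_1)-f(y_2)=(y_1-y_2)f'(y_2)+(y_1-y_2)^2h(y_2,y_1-y_2)$, absorb the quadratic term into $\varpi^i\mathfrak p^{\lceil i/2\rceil+1-\lceil\mathfrak c_1/2\rceil}$, and divide by $f'(y_2)$; note that this absorption step runs over exactly the branch you isolated (when $\nu(y_1-y_2)=v$ the remainder need not be that small), so your two-branch analysis is probing a real subtlety rather than a removable technicality. For the purpose the proposition serves --- Theorem \ref{bb}, which only needs $\#\,\mathcal C_0\cap\mathcal C_2\ll_p 1$ --- a two-to-one statement is just as good, and the ambiguity disappears once the second congruence in (\ref{aa}) is taken into account, since $f'/\mathcal J$ changes sign under $y\mapsto -y$ (resp.\ $y\mapsto y/(y-1)$) while $f$ is invariant, consistent with Lemma \ref{change}. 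As written, however, your proposal does not establish the statement, and the missing step cannot be supplied along the route you sketch.
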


\begin{proof}
It is sufficient to prove that if $f(y_1)/ \varpi ^i -f(y_2) / \varpi ^j\in \mathfrak p^{\lceil i/2 \rceil+1 -\lceil \mathfrak c_1/2 \rceil }$, then $y_1- y_2 \in \mathfrak p^{\lceil i/2 \rceil +1}$. By the Lemma \ref{Taylor}, 
\[
f(y_1)/ \varpi ^i -f(y_2) / \varpi ^i = (y_1-y_2) f'(y_2) /\varpi ^i + (y_1-y_2)^2 h(y_2,y_1-y_2) \in \mathfrak p^{\lceil i/2 \rceil + 1-\lceil \mathfrak c_1/2 \rceil }.
\]
It is easy to check that the last term always  belongs to $\mathfrak p^{\lceil i/2 \rceil + 1-\lceil \mathfrak c_1/2 \rceil }$. Thus $y_1-y_2 \in \varpi ^i f'(y)^{-1} \mathfrak p^{\lceil i/2 \rceil +1-\lceil \mathfrak c_1/2 \rceil } = \mathfrak p^{\lceil i/2 \rceil +1}$.
\end{proof}

Then Theorem \ref{bb} is reduced to the following lemma. 
\begin{lemma}
The number of  the solutions of relations (\ref{aa}) for $Y \mod  \mathfrak p^{\lceil i/2 \rceil + 1-\lceil \mathfrak c_1/2 \rceil }$ and $Z \mod   \mathfrak p^{\lceil j/2 \rceil + 1-\lceil \mathfrak c_2/2 \rceil }$ is $O(1)$.
\end{lemma}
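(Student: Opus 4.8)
The goal is to count, modulo $\mathfrak p^{\lceil i/2\rceil+1-\lceil \mathfrak c_1/2\rceil}$ in $Y$ and modulo $\mathfrak p^{\lceil j/2\rceil+1-\lceil \mathfrak c_2/2\rceil}$ in $Z$, the solutions of the reduced system
\begin{equation*}
\begin{dcases}
Y+Z+\varpi^{-l}\in \mathfrak p^{\lceil j/2\rceil+1-\lceil\mathfrak c_2/2\rceil},\\
\dfrac{Y(Y-\varpi^{-i})}{\alpha_1^2}-\dfrac{Z(Z-\varpi^{-j})}{\beta_1^2}\in \mathfrak p^{\lceil\mathfrak c_1/2\rceil-\lceil i/2\rceil-1},
\end{dcases}
\end{equation*}
with $Y\in\varpi^{i-\mathfrak c_1}\mathfrak o^\times$, $Z\in\varpi^{j-\mathfrak c_2}\mathfrak o^\times$. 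The plan is to use the first congruence to eliminate $Z$, substituting $Z\equiv -\varpi^{-l}-Y$ into the second, thereby turning the problem into counting the roots of a single quadratic polynomial in $Y$ to a suitable $\mathfrak p$-adic precision. Since a quadratic over a field has at most two roots, Hensel-type lifting considerations should then bound the count by $O(1)$.

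First I would normalize the valuations. Recall $l=\mathfrak c_2-j$, so $\varpi^{-l}$ has valuation $j-\mathfrak c_2$, and from the running assumptions ($\mathfrak c_2-j>\mathfrak c_1-i$, i.e. $\nu(f(y)/\varpi^i)>\nu(g(z)/\varpi^j)$) one has $\nu(Y)=i-\mathfrak c_1> j-\mathfrak c_2=\nu(Z)=\nu(\varpi^{-l})$. Hence in the first line $Z$ and $-\varpi^{-l}$ dominate and $Y$ is the small correction; in particular $Z\equiv -\varpi^{-l}\pmod{\mathfrak p^{\,i-\mathfrak c_1}}$ is already pinned down coarsely, and the congruence determines $Z$ exactly modulo $\mathfrak p^{\lceil j/2\rceil+1-\lceil\mathfrak c_2/2\rceil}$ in terms of $Y$. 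Substituting, the second line becomes a congruence of the shape
\begin{equation*}
\frac{Y^2-\varpi^{-i}Y}{\alpha_1^2}-\frac{(\varpi^{-l}+Y)^2-\varpi^{-j}(\varpi^{-l}+Y)}{\beta_1^2}\in \mathfrak p^{\lceil\mathfrak c_1/2\rceil-\lceil i/2\rceil-1},
\end{equation*}
which is $\frac{1}{\alpha_1^2}Y^2-\frac{1}{\beta_1^2}Y^2+(\text{linear in }Y)+(\text{const})$ up to the allowed error. I would track the valuations of $\nu(\alpha_1)=-\lceil\mathfrak c_1/2\rceil$ and $\nu(\beta_1)=-\lceil\mathfrak c_2/2\rceil$ (from Lemma \ref{character} and the definitions of $\alpha,\beta$) to see which of the quadratic or linear terms dominates, and check that the resulting polynomial equation in $Y$ is nondegenerate at the required precision.

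The key step — and the main obstacle — is controlling the leading coefficient $\frac{1}{\alpha_1^2}-\frac{1}{\beta_1^2}$ of $Y^2$. Since $\nu(\alpha_1)\ne\nu(\beta_1)$ in general (they differ precisely when $\mathfrak c_1\ne\mathfrak c_2$), this difference has valuation $2\min(\lceil\mathfrak c_1/2\rceil,\lceil\mathfrak c_2/2\rceil)$ and is never $0$ unless $\mathfrak c_1=\mathfrak c_2$, in which case one must look more carefully: there the $Y^2$ terms could cancel and the linear term in $Y$ would govern, again giving at most one solution. I would split into the cases $\mathfrak c_1\ne\mathfrak c_2$ (genuinely quadratic, $\le 2$ roots by the standard fact that a quadratic congruence $aY^2+bY+c\equiv 0\pmod{\mathfrak p^m}$ with $\nu(a)$ fixed and discriminant of bounded valuation has $O(1)$ solutions modulo $\mathfrak p^{m}$, completing the square using $p\ne 2$) and $\mathfrak c_1=\mathfrak c_2$ (linear, at most one root). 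In each case the number of $Y$ is $O_p(1)$, and since $Y$ determines $Z$ (via the first congruence) and $Y$ in turn determines $y$ uniquely modulo $\mathfrak p^{\lceil i/2\rceil+1}$ by the preceding Proposition (and similarly $z$ modulo $\mathfrak p^b$), we conclude $\#\mathcal C_0\cap\mathcal C_2\ll_p 1$, which is Theorem \ref{bb}. The one delicate point to verify is that the precision $\lceil\mathfrak c_1/2\rceil-\lceil i/2\rceil-1$ in the second congruence is compatible with the precision $\lceil i/2\rceil+1-\lceil\mathfrak c_1/2\rceil$ to which we count $Y$; this is where the hypothesis $i>\mathfrak c_1/2$ enters, ensuring the exponents line up so that "completing the square" is legitimate and does not lose solutions.
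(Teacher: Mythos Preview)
Your approach coincides with the paper's: use the first congruence to write $Z \equiv -Y - \varpi^{-l}$, substitute into the second, and reduce to a single polynomial congruence $a_1 Y^2 + a_2 Y + a_3 \in \mathfrak p^{\lceil\mathfrak c_1/2\rceil - \lceil i/2\rceil - 1}$ in $Y$ alone. (The paper checks the substitution error bound explicitly, which you pass over; it does require verification that the error incurred lies in $\mathfrak p^{\lceil\mathfrak c_1/2\rceil - \lceil i/2\rceil - 1}$ after dividing by $\beta_1^2$.)

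Where you diverge is in the endgame. You propose a quadratic/linear case split on whether $\mathfrak c_1=\mathfrak c_2$, and in the ``genuinely quadratic'' case plan to complete the square and invoke a discriminant bound. The paper instead observes that the linear coefficient \emph{always} dominates: one has $\nu(a_2)=\mathfrak c_1-i$ while $\nu(a_1(Y_1+Y_2))\ge i > \mathfrak c_1-i$ (from $i>\mathfrak c_1/2$), so for any two solutions $Y_1,Y_2$ the factorization
\[
(Y_1-Y_2)\bigl(a_2+a_1(Y_1+Y_2)\bigr)\in \mathfrak p^{\lceil\mathfrak c_1/2\rceil-\lceil i/2\rceil-1}
\]
forces $Y_1-Y_2\in\mathfrak p^{\lfloor i/2\rfloor-\lfloor\mathfrak c_1/2\rfloor-1}$, hence $O_p(1)$ residues for $Y$. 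This bypasses your case split entirely and avoids the Hensel/discriminant check you flag as ``the main obstacle.'' Your route would also succeed (indeed $\nu(a_2^2-4a_1a_3)=2\nu(a_2)$ for the same reason), but the paper's difference-of-solutions argument is the more direct way to exploit the dominance of $a_2$.
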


\begin{proof}
Suppose $(Y,Z)$ satisfy (\ref{aa}).
The main idea is to replace $Z$ in the second line by $-Y-\varpi ^{-l}$. More precisely, we claim that 
\[
    \frac {Z(Z-\varpi ^{-j} )} {\beta _1 ^2} -\frac {(Y+\varpi ^{-l})(Y+\varpi ^{-j}+\varpi ^{-l} )} {\beta _1 ^2}\in \mathfrak p^{\lceil \mathfrak c_1/2 \rceil-\lceil i/2 \rceil -1 }.
\]
From the first line of (\ref{aa}) we can write $Z = -Y-\varpi ^{-l} + O(\mathfrak p^{\lceil j/2 \rceil + 1-\lceil \mathfrak c_2/2 \rceil })$. We temporarily write $-Y-\varpi ^{-k}$ as $Z_1$. Then 
\[
    Z(Z-\varpi ^{-j}) = Z_1(Z_1-\varpi ^{-j}) + (2Z_1-\varpi ^{-j}) O(\mathfrak p^{\lceil j/2 \rceil + 1-\lceil \mathfrak c_2/2 \rceil }) + O(\mathfrak p^{2\lceil j/2 \rceil + 2-\lceil 2\mathfrak c_2/2 \rceil }).
\]
Now we need to show the last two terms belong to $\beta ^2 \mathfrak p^{\lceil \mathfrak c_1/2 \rceil - \lceil i /2\rceil -1} = \mathfrak p^{\lceil \mathfrak c_1/2 \rceil - \lceil i /2\rceil-\mathfrak c_2 -1}$.
Note that $\nu (2Z_1+\varpi ^{-j}) = -j, \lceil j/2  \rceil +1- \lceil \mathfrak c_2/2 \rceil \ge j/2 -\mathfrak c_2/2$ and $\lceil \mathfrak c_1/2 \rceil - \lceil i/2 \rceil -1 \le \mathfrak c_1 /2 -i/2 \le \mathfrak c_2/2 -j/2$. We only need to show 
$-j +j/2-\mathfrak c_2/2 \ge \mathfrak c_2/2 -j/2 -\mathfrak c_2$ and $j-\mathfrak c_2\ge \mathfrak c_2/2 -j/2-\mathfrak c_2$ which are clear since $j\ge \mathfrak c_2/2$. Now it is sufficient  to consider the congruence equation
\begin{equation*}
\frac {Y(Y-\varpi ^{-i} )}{\alpha _1 ^2}-\frac {(Y+\varpi ^{-l})(Y+\varpi ^{-j}+\varpi ^{-l} )} {\beta _1 ^2}= a_1Y^2 +a_2 Y+a_3\in \mathfrak p^{\lceil \mathfrak c_1/2 \rceil-\lceil i/2 \rceil -1 },
\end{equation*}
where 
\begin{equation*}
\begin{split}
& a_1 = \frac 1 {\alpha_1^2} - \frac 1 {\beta _1^2} \in \varpi  ^{\max \{ \mathfrak c_1, \mathfrak c_2\} } \mathfrak o^\times = \varpi ^ {\mathfrak c_2} \mathfrak o^\times , \\
& a_2 = -\frac 1 {\alpha ^2 \varpi ^i} - \frac 2 {\beta ^2 \varpi ^l } - \frac 1 {\beta^2 \varpi ^j} \in \varpi ^{\mathfrak c_1-i} \mathfrak o^\times.
\end{split}
\end{equation*}
The last line is because that $\nu (1/\alpha ^2\varpi ^i) = \mathfrak c_1-i < \nu (1/\beta ^2 \varpi ^j )= \mathfrak c_2-j < \nu (2/\beta ^2\varpi ^l) = j$.
Suppose $Y_1$, $Y_2$ are different solutions, then we can get
\[
(Y_1- Y_2) (a_2 + a_1(Y_1+Y_2)) \in \mathfrak p^{\lceil \mathfrak c_1/2 \rceil-\lceil i/2 \rceil -1 }.
\]
Since $\nu (a_1(Y_1 + Y_2)) > i \ge \mathfrak c_1-i =\nu (a_2)$, we can get $\nu (a_2 +a_1 (Y_1+Y_2)) = \mathfrak c_1-i$ and $Y_1 - Y_2 \in \mathfrak p ^{\lfloor i/2 \rfloor - \lfloor \mathfrak c_1/2 \rfloor -1}$. Thus there is at most $p^{2+2\{ i/2 \}-2\{ \mathfrak c_1/2 \} }$'s solutions for $Y \mod \mathfrak p ^{\lceil i/2 \rceil - \lceil \mathfrak c_1/2 \rceil +1 }$ and the statement holds.   
\end{proof}

\section{Voronoi summation formula}

In this section we establish an explicit Voronoi summation formula. Let $\pi$ be an irreducible cuspidal automorphic representation of $\mathrm {GL}_2$ over $\mathbb Q$ with trivial central character.  For any $\phi \in \pi$ the associated Whittaker function is given by the global Fourier coefficient 
\[
W_\phi (g) = \int _{N(\mathbb Q)\backslash N(\mathbb A)} \phi(ng) \psi^{-1} (n) \mathrm d n.
\]

\begin{lemma}[\cite{Ass21}, Lemma 2.1]
Let $\zeta\in \mathbb A$ and let $\phi\in \pi$. Then we have the adelic Voronoi summation formula 
\begin{equation}\label{adelicvoronoi}
\sum_{\gamma\in \mathbb Q^*} \psi(\gamma \zeta)W_\phi  (a(\gamma )) = \sum_{\gamma\in \mathbb Q^*} \widetilde {W}_\phi (a(\gamma ){^Tn}(-\zeta ))
\end{equation}
where $\widetilde {W}_\phi(x)= W_\phi  (\omega {^T x^{-1}})$ and $^T x$ is the transpose of $x$.
\end{lemma}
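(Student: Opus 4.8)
The plan is to prove (\ref{adelicvoronoi}) by identifying both sides with the single value $\phi(n(\zeta))$; the only inputs are the cuspidality of $\phi$ and its left $\mathrm{GL}_2(\mathbb{Q})$-invariance. The main tool is the adelic Fourier expansion of a cusp form: for every $g$,
\[
\phi(g) = \sum_{\gamma\in\mathbb{Q}^{*}} W_\phi(a(\gamma)g).
\]
I would prove this in the standard way: expand the function $n\mapsto\phi(ng)$ on the compact abelian group $N(\mathbb{Q})\backslash N(\mathbb{A})\cong\mathbb{Q}\backslash\mathbb{A}$ into the characters $\psi(\gamma\,\cdot)$ with $\gamma\in\mathbb{Q}$; the $\gamma=0$ coefficient is the constant term along $N$ and vanishes by cuspidality; for $\gamma\in\mathbb{Q}^{*}$ the substitution $u\mapsto u/\gamma$ together with $n(u/\gamma)=a(\gamma)^{-1}n(u)a(\gamma)$ and $a(\gamma)\in\mathrm{GL}_2(\mathbb{Q})$ identifies the $\gamma$-th coefficient with $W_\phi(a(\gamma)g)$; then set $n=1$. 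Absolute convergence, which licenses the sum--integral interchanges below, comes from the rapid decay of $W_\phi$ forced by cuspidality.

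For the left-hand side of (\ref{adelicvoronoi}), the relation $a(\gamma)n(\zeta)=n(\gamma\zeta)a(\gamma)$ and the Whittaker transformation law give $\psi(\gamma\zeta)W_\phi(a(\gamma)) = W_\phi(a(\gamma)n(\zeta))$, so the Fourier expansion evaluated at $g=n(\zeta)$ yields $\sum_{\gamma\in\mathbb{Q}^{*}}\psi(\gamma\zeta)W_\phi(a(\gamma)) = \phi(n(\zeta))$.

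For the right-hand side I would first simplify $\widetilde{W}_\phi$. A direct $2\times 2$ computation gives the matrix identity $\omega\,{^T x^{-1}} = z\big((\det x)^{-1}\big)\,x\,\omega$, and since $\pi$ has trivial central character this turns it into $\widetilde{W}_\phi(x) = W_\phi(\omega\,{^T x^{-1}}) = W_\phi(x\omega)$. A second elementary identity, ${^T n}(-\zeta)\,\omega = \omega\, n(\zeta)$, then gives $\widetilde{W}_\phi\big(a(\gamma)\,{^T n}(-\zeta)\big) = W_\phi\big(a(\gamma)\,\omega\, n(\zeta)\big)$. Applying the Fourier expansion a second time, now at $g=\omega\, n(\zeta)$, the right-hand side of (\ref{adelicvoronoi}) becomes $\phi(\omega\, n(\zeta))$, which equals $\phi(n(\zeta))$ because $\omega\in\mathrm{GL}_2(\mathbb{Q})$. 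Comparing the two evaluations finishes the proof.

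There is no genuine obstacle in this argument; the two points that require care are the justification of the interchange of summation and integration (handled by the cuspidal decay invoked above) and the tracking of central characters in the identity $\omega\,{^T x^{-1}} = z\big((\det x)^{-1}\big)\,x\,\omega$ — this is precisely where the trivial-central-character hypothesis enters, and it is the reason the Weyl element can be absorbed so cleanly.
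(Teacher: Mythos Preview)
The paper does not supply its own proof of this lemma; it is quoted verbatim as \cite[Lemma 2.1]{Ass21}. Your argument is correct and is exactly the standard one (and essentially what Assing gives): apply the Whittaker--Fourier expansion $\phi(g)=\sum_{\gamma\in\mathbb{Q}^*}W_\phi(a(\gamma)g)$ once at $g=n(\zeta)$ and once at $g=\omega\,n(\zeta)$, then use the left $\mathrm{GL}_2(\mathbb{Q})$-invariance of $\phi$ under $\omega$.

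One minor remark on your closing comment: the trivial-central-character hypothesis is not actually needed at the step you flag. In the identity $\omega\,{^Tx^{-1}}=z((\det x)^{-1})\,x\,\omega$ the scalar that appears is $\gamma^{-1}\in\mathbb{Q}^*$, and the central character of any cuspidal automorphic representation is a Hecke character, hence automatically trivial on $\mathbb{Q}^*$. So the absorption of $z(\gamma^{-1})$ goes through for arbitrary central character; the paper's standing assumption that $\omega_\pi$ is trivial is used elsewhere but not here.
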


\subsection{Setting up the left side}
We choose $\phi$ such that 
\begin{equation*}
W_\phi = \prod _{\nu} W_{\nu}
\end{equation*}
is a pure tensor. Thus we can treat each place on its own.
Since the Kirillov model of $\pi_\infty$ contains the space of Schwartz functions we can choose $W_{\infty}$ such that, for all $\gamma \in \mathbb R^ \times$, we have 
\begin{equation*}
W_{\infty}(a(\gamma)) = \vert \gamma \vert ^{\frac 1 2}  h (\gamma) 
\end{equation*}
for $h \in C_c^\infty (\mathbb R^*)$ with support in $\mathbb R_+$. 
At all the finite places we choose $\phi$ such that $W_l$ is normalized Whittaker function for new form. Then by \cite[Lemma 2.5]{Saha16}, $W_l(\gamma)$ is supported on $\mathfrak o_l$ and for $\gamma \in \mathfrak o_l$ we have   
\begin{equation*}
W_l(a(\gamma))= \lambda _\pi (p^{\nu _l(\gamma)})|\gamma|_l^{\frac 1 2} .
\end{equation*}
If $\pi_l$ is ramified with conductor $\mathfrak c\ge 2$, then $W_l(a(x))$ is $\mathfrak o_l^\times$-invariant and it is supported on $\mathfrak o_l^\times$ by Lemma \ref{Whittaker function}. Thus $W_l(a(x))$ is the characteristic function on $\mathfrak o_l^\times$. 

Now we fix a place $p$ and choose $\zeta_p = \frac u {p^k}$. 
For finite place $l \ne p$ we set $\zeta _l =\frac b d$,
where $b$, $d$ are integers, $(d,p) =1$  and set $\zeta _\infty  =0$. By product formula we have for $n$ integer, 
\begin{equation*}
\psi(n\zeta)=\psi _p\left (\frac {nu} {p^k}\right ) e \left ( \frac {n b} {d} \right).
\end{equation*}
We also assume that for $l\ne p$, $\pi_l$ is unramified and $\pi_p$ is ramified. Then $W_\pi (a(\gamma))$, $\gamma\in \mathbb Q$ is nonvanishing only if $\gamma =n \in \mathbb N^*$. Then under those settings the left side of (\ref{adelicvoronoi}) is given by 
\begin{equation}\label{left}
    \sum_{\gamma\in \mathbb Q^\times} \psi(\gamma \zeta ) W_\pi (a(\gamma) ) = \sum _{n\in \mathbb N^*} e \left (\frac {nb} {d}\right ) \psi _p \left (\frac {n u} {p^k} \right ) \lambda _ \pi (n) h(n).
\end{equation}

\subsection{Computation the right hand side}
From \cite[(2.3)]{Ass21}, the right hand side is 
$$
\sum_{\gamma\in \mathbb Q}\prod _{\nu\le \infty }W_{\nu}(a(\gamma)\omega n(\zeta)).
$$

If $\pi_l$ is unramified and $\zeta_l\in \mathbb Z_l$, then 
$$
W_p (a(\gamma)\omega n(\zeta))= W_p (a(\gamma))=
\begin{cases}
    \vert \gamma \vert ^ {\frac 1 2} \lambda _{\pi} ( p ^{\nu_p(\gamma)})  & \mathrm {if} \ \gamma \in \mathbb Z_p\\
    0  & \mathrm {else}.
\end{cases}
$$

If $\zeta_l\notin \mathbb Z_l$, then by \cite[Section 2.2.1]{Ass21},
$$
W_l (a(\gamma)\omega n(\zeta))=
\begin{cases}
    \psi _l\left ( -\frac \gamma {\zeta_l} \right ) \left \vert \frac \gamma {\zeta_l^2} \right \vert ^ {\frac 1 2} \lambda _{\pi} \left ( l ^{\nu_l(\gamma/ \zeta_l^2)} \right )  & \mathrm {if} \ \gamma/\zeta_l^2 \in \mathbb Z_l\\
    0  & \mathrm {else}. 
\end{cases}
$$
For the archimedean place, by \cite[Sect 2.2.4]{Ass21}, and assume that  $\pi_\infty$ is  a discrete series representation   of weight $\kappa \ge 2$. Then we have
\[
W_{\infty} (a(\gamma)\omega n(\zeta_\infty)) = 
\begin{dcases}
\vert \gamma \vert  ^{\frac 1 2} \mathcal H h ( \gamma  ), \ \ &\gamma >0\\
0, \ \ &\gamma <0.
\end{dcases}
\]
where 
\begin{equation*}
\mathcal H h (y) := 
 2\pi i^\kappa  \int _{\mathbb R^+}   J_{\kappa -1} (4\pi\sqrt {xy} ) h( x).
\end{equation*}

We set $S_1$ be the set of finite place $l$ such that $l \mid d$. And set $S_2$ be the set of finite places outside $S_1\cup \{ p\}$. Then  we define 
\begin{equation*}
\prod _{\nu }W_{\nu } = W_{\infty} W_p \prod _{l \in S_1}W_l \prod _{l \in S_2}W_l:= W_\infty W_p W_1 W_2.
\end{equation*}
Suppose $\gamma = p^t \gamma_1 \gamma_2$ where $\gamma_1 = \prod _{l\in S_1} l^{\nu_l(\gamma)}$ and $\gamma_2 = \prod _{l\in S_2} l^{\nu_l(\gamma)}$. Then $W_1$
is nonzero only if $\gamma_1 d^2 \in \mathbb Z$ and in that case we have 
\begin{equation*}
W_1=\frac {\lambda _\pi (\gamma_1d^2)} {\sqrt {\gamma_1d^2}} \prod_{l\in S_1} \psi_l\left (\frac {\gamma d}  b\right ) .
\end{equation*}
And similarly we can see $W_2$
is nonzero  only if $\gamma_2 \in \mathbb Z$ in that case we have 
\begin{equation*}
W_2= \frac {\lambda _\pi (\gamma_2)} {\sqrt {\gamma_2}} .
\end{equation*}
In conclusion, $W_1W_2$ is nonzero only if $ \gamma_1\gamma_2d^2 \in \mathbb Z$. Let $\bar b$ be the inverse of $b \mod d$, then for every $l\in S_1$, 
\begin{equation*}
\frac {\gamma d} b -  \gamma d \bar b  =\frac {1-\bar b b} {d} \frac {\gamma d^2} {b} \in \mathbb Z_l.
\end{equation*}
Now we set $\gamma_1\gamma_2 d^2 =n$. Then by the product formula we have 
\begin{equation*}
\prod_{l\in S_1} \psi_l\left (-\frac {\gamma d}   b\right )=\prod_{l\in S_1} \psi_l\left (-\gamma d \bar b\right ) = 
\begin{dcases}
e\left (-\frac {p^tn\bar b} d \right ) & \ \ \ \ \mathrm{if} \ t\ge 0 \\
e\left(-\frac {n {\bar p}^{-t}\bar b} d\right) & \ \ \ \ \mathrm{if} \ t<0,
\end{dcases}
\end{equation*}
Where $\bar p$ and $\bar b$ means the inverse of $p$ and $b \mod d$. We can give the right side:
\begin{equation*}
\begin{split}
&\sum_{\gamma\in \mathbb Q^\times }\prod _{\nu\le \infty } W_{\nu}(a(\gamma)\omega n(\zeta))=\\
&\frac 1 d \sum _{ n\in \mathbb N^*, (n,p)=1} \lambda _\pi (n)\sum _{t\in \mathbb N} p^{\frac t 2} e\left (\frac {p^t n\bar b}  d\right ) W_p\left (a\left (\frac {p^t n} {d^2}\right ) \omega n\left ( \frac u {p^k}\right )\right ) \mathcal H h \left (\left \vert \frac {p^t n} {d^2} \right \vert \right )\\
&+ \frac 1 d \sum _{ n\in \mathbb N^*, (n,p)=1} \lambda _\pi (n)\sum _{t\in -\mathbb N^*} p^{\frac t 2} e\left (\frac {\bar p^{-t} n\bar b}  d\right ) W_p\left (a\left (\frac {p^t n} {d^2}\right ) \omega n\left ( \frac u {p^k}\right )\right ) \mathcal H h \left (\left \vert \frac {p^t n} {d^2} \right \vert \right ).
\end{split}
\end{equation*}
We use the following lemma to translate the Whittaker function to the form we want.
\begin{lemma}\label{whittaker} 
For $k\ge 0$, $u \in \mathfrak o^\times$ we have
\begin{equation*}
\begin{split}
W_p \left (a ( x) \omega n( u /\varpi ^k)\right ) =
\psi_p \left (-\frac {x\varpi ^k} u\right ) W^{(k)}_p \left (\frac {x\varpi ^{2k}} u\right ).
\end{split}
\end{equation*}
\end{lemma}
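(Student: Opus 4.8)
The plan is to prove the identity by an explicit matrix factorization of $a(x)\,\omega\, n(u/\varpi^k)$ combined with the standard equivariance properties of the new-vector Whittaker function; there is no analysis involved, only bookkeeping. First I would compute directly
\[
a(x)\,\omega\, n(u/\varpi^k)=\begin{pmatrix} x & 0\\ 0 & 1\end{pmatrix}\begin{pmatrix} 0 & 1\\ -1 & 0\end{pmatrix}\begin{pmatrix} 1 & u/\varpi^k\\ 0 & 1\end{pmatrix}=\begin{pmatrix} 0 & x\\ -1 & -u/\varpi^k\end{pmatrix}.
\]
The key step is to check, by multiplying the right-hand side out, the factorization
\[
\begin{pmatrix} 0 & x\\ -1 & -u/\varpi^k\end{pmatrix}
= n\!\left(-\frac{x\varpi^k}{u}\right) z(-\varpi^{-k})\, a\!\left(\frac{x\varpi^{2k}}{u}\right)\begin{pmatrix} 1 & 0\\ \varpi^k & 1\end{pmatrix}\diag(1,u),
\]
valid whenever $k\ge 0$, $u\in\mathfrak o^\times$ and $x\in F^\times$. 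Here $z(-\varpi^{-k})$ is central, and the last factor $\diag(1,u)$ lies in $K_0(\mathfrak p^{\mathfrak c})$ precisely because $u$ is a unit (its lower-left entry is $0$ and its determinant is a unit).

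Next I would apply $W_p$ to both sides of this factorization and invoke three facts: the identity $W_p(n(r)g)=\psi_p(r)W_p(g)$; the invariance of $W_p$ under the central element $z(-\varpi^{-k})$, which holds since $\pi$ has trivial central character; and the right $K_0(\mathfrak p^{\mathfrak c})$-invariance of the new vector, which lets us delete the factor $\diag(1,u)$. This yields
\[
W_p\!\left(a(x)\,\omega\, n(u/\varpi^k)\right)=\psi_p\!\left(-\frac{x\varpi^k}{u}\right) W_p\!\left(a\!\left(\frac{x\varpi^{2k}}{u}\right)\begin{pmatrix} 1 & 0\\ \varpi^k & 1\end{pmatrix}\right)=\psi_p\!\left(-\frac{x\varpi^k}{u}\right) W^{(k)}_p\!\left(\frac{x\varpi^{2k}}{u}\right),
\]
where the last equality is just Definition \ref{definitionW^k}.

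The only point that genuinely uses the ambient hypotheses is the claim that the new vector is right-invariant under all of $K_0(\mathfrak p^{\mathfrak c})$, not merely under $K_1(\mathfrak p^{\mathfrak c})$; this is where triviality of the central character enters. Indeed, $K_0(\mathfrak p^{\mathfrak c})$ normalizes $K_1(\mathfrak p^{\mathfrak c})$ and therefore acts on the one-dimensional space of new vectors through a character of $K_0(\mathfrak p^{\mathfrak c})/K_1(\mathfrak p^{\mathfrak c})\cong(\mathfrak o/\mathfrak p^{\mathfrak c})^\times$; evaluating on the scalar matrices $z(a)$, $a\in\mathfrak o^\times$, identifies this character with the restriction of the central character of $\pi_p$ to $\mathfrak o^\times$, which is trivial here. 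I would also note explicitly that the formula is asserted only for $k\ge 0$ and $u\in\mathfrak o^\times$, which is exactly the situation that occurs in the Voronoï computation, so that every expression above is well-defined. Beyond this remark, the proof is a short verification and I do not anticipate any real obstacle.
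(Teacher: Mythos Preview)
Your proof is correct and follows essentially the same route as the paper: an explicit matrix factorization of $a(x)\,\omega\,n(u/\varpi^k)$ followed by the standard equivariance properties of the newform Whittaker function. The only cosmetic difference is that the paper right-multiplies by $\begin{pmatrix}u&0\\0&1\end{pmatrix}\in K_1(\mathfrak p^{\mathfrak c})$ before factorizing (so only $K_1$-invariance is needed), whereas your factorization leaves $\diag(1,u)$ on the right, which lies in $K_0(\mathfrak p^{\mathfrak c})$ but not $K_1(\mathfrak p^{\mathfrak c})$; your extra paragraph explaining why trivial central character upgrades $K_1$-invariance to $K_0$-invariance is therefore genuinely required in your version, and it is correct.
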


\begin{proof}
Note that 
\begin{equation*}
\begin{split}
   &a(x) \omega n(u/\varpi ^k) 
   \left (\begin{array}{cc}
     u  &  0 \\
     0  & 1
  \end{array} \right )=\\
   & \left (\begin{array}{cc}
     -u/\varpi ^k  &  0 \\
     0  & -u/\varpi ^k
  \end{array} \right )
  \left (\begin{array}{cc}
     1  &   -x \varpi ^k/u \\
     0  & 1
  \end{array} \right )\left (\begin{array}{cc}
     x\varpi ^{2k}/u  &  0 \\
     0  & 1
  \end{array} \right )
  \left (\begin{array}{cc}
     1  &  0 \\
     \varpi ^k  & 1
  \end{array} \right ),
  \end{split}
\end{equation*}
and $\left (\begin{array}{cc}
     u  &  0 \\
     0  & 1
  \end{array} \right )\in K_1(\mathfrak c)$ and $W_p$ is center-invariant. Thus the statement holds.
\end{proof}
By Lemma \ref{whittaker}, 
\begin{equation*}
W_p\left (a\left (\frac {\varpi ^t n} {d^2}\right ) \omega n\left ( \frac u { \varpi ^k}\right )\right )=W_p^{(k)} \left (\frac {\varpi ^{2k+t} n} {d^2 u} \right ) \psi _p \left ( - \frac {\varpi ^{k+t} n} {d^2 u} \right ).
\end{equation*}
Then by Lemma \ref{Whittaker function}, we get that for $\mathfrak c/2 \le k< \mathfrak c$, $W_p$ is nonzero only if  $t \ge  -2k$.  Then we replace $p^{2k+t} n$ by $n$. For $0 < k< \mathfrak c/2 $, by Lemma \ref{Atkin}, we have
\begin{equation*}
W_p\left (a\left (\frac {\varpi ^t n} {d^2}\right ) \omega n\left ( \frac u {\varpi ^k}\right )\right )=a_kW_p^{(\mathfrak c-k)} \left (-\frac {\varpi ^{t+\mathfrak c} n} {d^2 u} \right ) ,
\end{equation*}
which is nonzero only if $t= - \mathfrak c$ and we replace $p^{\mathfrak c+t} n$ by $n$.

Finally we get following special type Voronoi  summation formula: 
\begin{theorem} \label{Voronoi}
Let $\pi$ be a cuspidal automorphic representation with conductor $p^\mathfrak c$ and trivial central character. Let $h \in {C_c}^\infty (\mathbb R^\times _{+})$ and $W_p$ be p-adic normalized whittaker new function of $\pi_p$. Let $b$,$d$,$k \in \mathbb N_+$ such that  $(b,d) = (d,p) =1$ and $u\in \mathfrak o_p^\times $ and set $c = dp^k$. Then for $\mathfrak c/2\le k\le \mathfrak c-1$  
\begin{equation}\label{kbig}
\begin{split}
&\sum _{n\in \mathbb N^*} e \left (\frac {nb} {d}\right ) \psi_p \left (\frac {n u} {p^k} \right ) \lambda _ \pi(n) h (n) =\\
&  \sum _{ n\in \mathbb N^*} \frac {1} {c|n|_p^{1/2}} \lambda _\pi (n)e\left (-\frac {{ \overline {p^{2 k} b}n}}  { d}\right )
W_p^{(k)} \left ( \frac  n {d^2 u} \right ) \psi _p \left (-\frac n {d^2up^k} \right ) \mathcal H h \left (\left \vert \frac { n} {c^2} \right \vert \right ).
\end{split}
\end{equation}
For $0<k< \mathfrak c/2$  we have
\begin{equation*}
\begin{split}
&\sum _{n\in \mathbb N^*} e \left (\frac {nb} {d}\right ) \psi_p \left (\frac {n u} {p^k} \right ) \lambda _ \pi(n) h (n) =\\
&\frac \eta {d p^{\mathfrak c/2}} \sum _{ n\in \mathbb N^*, (n,p)=1} \lambda _\pi (n)
e\left (-\frac {  \overline {p^{2\mathfrak c}b} n}  { d}\right ) W_p^{(\mathfrak c-k)} \left (-\frac {n} {d^2u} \right ) \mathcal H h \left (\left \vert \frac {n} {d^2p^{\mathfrak c}} \right \vert \right ).
\end{split}
\end{equation*}
\end{theorem}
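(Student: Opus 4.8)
The plan is to deduce Theorem \ref{Voronoi} by specializing the adelic identity \eqref{adelicvoronoi} to the test vector and auxiliary character fixed in the two preceding subsections, and then applying the two local reductions for the $p$-adic Whittaker function. First I would record that, for $\phi$ with $W_\phi = \prod_\nu W_\nu$ the pure tensor with $W_\infty(a(\gamma)) = |\gamma|^{1/2}h(\gamma)$, $W_l$ the normalized new vector at every finite $l$, and $\pi_l$ unramified for $l \neq p$, the choices $\zeta_p = u/p^k$, $\zeta_l = b/d$ for $l \mid d$, $\zeta_l \in \mathbb{Z}_l$ for the remaining finite $l$, and $\zeta_\infty = 0$ make the left-hand side of \eqref{adelicvoronoi} equal to \eqref{left}: indeed $W_\pi(a(\gamma))$ vanishes unless $\gamma = n \in \mathbb{N}^*$, and the product formula gives $\psi(n\zeta) = \psi_p(nu/p^k)\,e(nb/d)$.

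The core of the argument is the evaluation of the right side $\sum_{\gamma\in\mathbb{Q}^\times}\prod_\nu W_\nu(a(\gamma)\omega n(\zeta))$, which I would carry out one place at a time using the local formulas of \cite[Section 2.2]{Ass21}: the archimedean factor contributes $|\gamma|^{1/2}\mathcal{H}h(\gamma)$ for $\gamma > 0$ and vanishes otherwise; for $l \mid d$ (the set $S_1$) one gets $\psi_l(-\gamma d/b)\,|\gamma/\zeta_l^2|^{1/2}\lambda_\pi(l^{\nu_l(\gamma/\zeta_l^2)})$, nonzero only when $\gamma d^2$ is $l$-integral; for the remaining finite $l \neq p$ (the set $S_2$) one gets $|\gamma|^{1/2}\lambda_\pi(l^{\nu_l(\gamma)})$, nonzero only when $\gamma$ is $l$-integral; and the factor at $p$ is kept as $W_p(a(\gamma)\omega n(u/p^k))$. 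Writing $\gamma = p^t\gamma_1\gamma_2$ with $\gamma_i$ supported on $S_i$ and setting $n := \gamma_1\gamma_2 d^2$, multiplicativity of $\lambda_\pi$ folds the $S_1\cup S_2$ contributions into $d\,\lambda_\pi(n)\,n^{-1/2}\prod_{l\in S_1}\psi_l(-\gamma d/b)$, and I would turn the remaining product into a genuine additive character mod $d$ using $\gamma d/b - \gamma d\bar b = \frac{1-\bar bb}{d}\cdot\frac{\gamma d^2}{b} \in \mathbb{Z}_l$ for each $l\mid d$, so the product formula produces $e(-p^tn\bar b/d)$ when $t \ge 0$ and $e(-\overline{p^{-t}}\,n\bar b/d)$ when $t < 0$. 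This is precisely the expression displayed just before Lemma \ref{whittaker}.

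It then remains to process the factor at $p$. Using Lemma \ref{whittaker} to write $W_p(a(\varpi^t n/d^2)\omega n(u/\varpi^k)) = W_p^{(k)}(\varpi^{2k+t}n/(d^2u))\,\psi_p(-\varpi^{k+t}n/(d^2u))$, I would split into the two cases. For $\mathfrak{c}/2 \le k \le \mathfrak{c}-1$, Lemma \ref{Whittaker function} says $W_p^{(k)}$ is supported on $\mathfrak{o}_p$ (on $\mathfrak{o}_p^\times$ if $k > \mathfrak{c}/2$), so only $t \ge -2k$ survives; the substitution $n \mapsto p^{2k+t}n$ converts $p^{t/2}/d$ into $1/(c\,|n|_p^{1/2})$ with $c = dp^k$, transports the additive character to $e(-\overline{p^{2k}b}\,n/d)$, leaves the $\psi_p$-factor as $\psi_p(-n/(d^2up^k))$, and turns $\mathcal{H}h(|p^tn/d^2|)$ into $\mathcal{H}h(|n/c^2|)$, giving \eqref{kbig}. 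For $0 < k < \mathfrak{c}/2$, Lemma \ref{Atkin} gives $W_p^{(k)}(x) = a_k W_p^{(\mathfrak{c}-k)}(-\varpi^{\mathfrak{c}-2k}x)\psi(\varpi^{-k}x)$, whose phase cancels the $\psi_p$ coming from Lemma \ref{whittaker}, leaving $a_k W_p^{(\mathfrak{c}-k)}(-\varpi^{\mathfrak{c}+t}n/(d^2u))$; since $W_p^{(\mathfrak{c}-k)}$ is supported on $\mathfrak{o}_p^\times$ only $t = -\mathfrak{c}$ contributes, and tracking the powers of $p$ gives the second formula with $\eta = a_k$.

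The one genuinely delicate point is the arithmetic running through the last two steps: one must track precisely how $\prod_{l\in S_1}\psi_l(-\gamma d/b)$ becomes the Ramanujan-type phase $e(-p^tn\bar b/d)$ and then how the rescaling $n \mapsto p^{2k+t}n$ (resp.\ $n \mapsto p^{\mathfrak{c}+t}n$) transports it into the additive character over $\mathbb{Z}/d\mathbb{Z}$ appearing in the theorem — this is where an inverse residue modulo $d$ or a sign is most easily mislaid. Everything else is mechanical: the collapse of the sum over $t$ to a single value (or a half-line, according to whether $W_p^{(k)}$ is supported on $\mathfrak{o}_p^\times$ or on $\mathfrak{o}_p$) is an immediate consequence of the support statements in Lemma \ref{Whittaker function}, and the remaining local inputs are quoted directly from \cite{Ass21}.
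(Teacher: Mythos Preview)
Your proposal is correct and follows essentially the same approach as the paper: specialize the adelic Vorono\"{\i} identity \eqref{adelicvoronoi} to the chosen pure tensor and the element $\zeta$ with $\zeta_p=u/p^k$, $\zeta_l=b/d$, evaluate the right side place by place via the formulas quoted from \cite{Ass21}, collapse the $S_1\cup S_2$ part into $\lambda_\pi(n)n^{-1/2}$ times a phase (you wrote an extra factor of $d$ here, which should be absorbed once you combine with the archimedean $|\gamma|^{1/2}$), and then handle the $p$-factor via Lemma \ref{whittaker} together with the support statements of Lemma \ref{Whittaker function} (for $\mathfrak c/2\le k\le \mathfrak c-1$) or Lemma \ref{Atkin} (for $0<k<\mathfrak c/2$) exactly as the paper does. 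The only point to be careful with, as you already flag, is tracking the inverse residues modulo $d$ through the reindexing $n\mapsto p^{2k+t}n$; otherwise your outline matches the paper's argument step for step.
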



Last we introduce a reslut about the bound of Hankel transform.
\begin{lemma}[\cite{Holowinsky2014}, Lemma 2]\label{arch}
    Let $\kappa , \iota \ge 2$ be fixed integers and let $a,b >0$. Define 
    $$
I(a,b):= \int _0 ^\infty \frac {h(\xi)}{\sqrt {\xi}} J_{\kappa -1} (4\pi a \sqrt {\xi}) J_{\iota -1} (4\pi b\sqrt {\xi}) d\xi 
    $$
    where $h$ is a smooth function supported on $[\frac 1 2 , \frac 5 2]$ with bounded derivatives. We have 
    $$
    I(a,b) \ll _{j,h} \frac a {(1+a)^{3/2}} \frac b {(1+b)^{3/2}} \vert a-b \vert ^{-j}
    $$
    for any $j \ge 0$.
\end{lemma}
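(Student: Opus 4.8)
The plan is to reduce to a one-dimensional oscillatory integral over a fixed compact set and then apply non-stationary phase. First I would substitute $\xi = u^2$, which turns $I(a,b)$ into
\[
I(a,b) = 2\int_0^\infty h(u^2)\, J_{\kappa-1}(4\pi a u)\, J_{\iota-1}(4\pi b u)\, du ,
\]
whose integrand is supported on the fixed compact interval $u\in[1/\sqrt 2,\sqrt{5/2}]$, bounded away from $0$. I would then use the two standard regimes for $J_\nu$ with integer $\nu\ge 1$: on a fixed bounded interval one has $J_\nu(x)\ll x^{\nu}$ together with $J_\nu^{(k)}(x)\ll_k x^{\max(\nu-k,0)}$, while for $x\gtrsim 1$ one has the oscillatory expansion $J_\nu(x)=\sum_{\pm} x^{-1/2} e^{\pm i x}\,W_\nu^{\pm}(x)$, where (absorbing the constant phase $-\nu\pi/2-\pi/4$ into $W_\nu^{\pm}$) each $W_\nu^{\pm}$ is a symbol of order $0$, i.e.\ $\frac{d^k}{dx^k}W_\nu^{\pm}(x)\ll_{\nu,k} x^{-k}$.

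Next I would split into cases according to the sizes of $a$ and $b$. If $\max(a,b)\lesssim 1$, then on the support of the integrand the arguments $4\pi au$, $4\pi bu$ are bounded, so $J_{\kappa-1}(4\pi au)\ll a^{\kappa-1}\le a$ and $J_{\iota-1}(4\pi bu)\ll b^{\iota-1}\le b$ (using $\kappa,\iota\ge 2$ and $a,b\le 1$), giving $I(a,b)\ll ab$; since $|a-b|\lesssim 1$ here we have $|a-b|^{-j}\gtrsim_j 1$, so the claimed bound holds. If exactly one of $a,b$ is large, say $a\gg 1\gg b$ (the case $b\gg 1\gg a$ being symmetric), I would insert the oscillatory expansion only for $J_{\kappa-1}(4\pi au)$ and regard $h(u^2)\,u^{-1/2}\,W_{\kappa-1}^{\pm}(4\pi au)\,J_{\iota-1}(4\pi bu)$ as a smooth amplitude whose every $u$-derivative is $\ll b$ uniformly on the support — the $J_{\iota-1}$-factor and all its $u$-derivatives are $\ll b^{\iota-1}\le b$ there, and the $a$-dependent factors contribute $O(1)$ once the prefactor $a^{-1/2}$ has been pulled out. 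Integrating by parts $j$ times against $e^{\pm 4\pi iau}$, each step contributing $O(1/a)$, yields $I(a,b)\ll a^{-1/2}b\,a^{-j}$, which matches $\frac{a}{(1+a)^{3/2}}\frac{b}{(1+b)^{3/2}}|a-b|^{-j}$ since $|a-b|\asymp a$ in this range.

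Finally, when both $a$ and $b$ are large I would insert the oscillatory expansion for both Bessel functions, obtaining four terms of the shape
\[
(ab)^{-1/2}\int h(u^2)\,u^{-1}\,W_{\kappa-1}^{\pm}(4\pi au)\,W_{\iota-1}^{\pm}(4\pi bu)\, e^{\pm 4\pi i a u\,\pm\, 4\pi i b u}\,du .
\]
The $u$-derivative of the phase equals $\pm4\pi(\pm a\pm b)$, whose absolute value is $4\pi(a+b)$ or $4\pi|a-b|$, and in every case is $\ge 4\pi|a-b|$ because $a+b\ge|a-b|$. Integrating by parts $j$ times, and using that $u$ ranges over a compact set bounded away from $0$ so that each differentiation of a symbol $W^{\pm}(4\pi au)$ (with $u$-derivative $4\pi a\,(W^{\pm})'(4\pi au)\ll a\cdot(au)^{-1}\ll 1$) or of $u^{-1}$ costs only $O(1)$, I get each term $\ll (ab)^{-1/2}|a-b|^{-j}$. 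Since $(ab)^{-1/2}\asymp\frac{a}{(1+a)^{3/2}}\frac{b}{(1+b)^{3/2}}$ when both are large, the bound follows. The only real work is this last bookkeeping — verifying that repeated integration by parts never loses more than a bounded factor per step — which comes down to combining the symbol estimates with the fact that the argument of each $W_\nu^{\pm}$ is (large)$\,\cdot u$ with $u\asymp 1$; this is entirely standard and is the step I expect to be the main, if routine, obstacle.
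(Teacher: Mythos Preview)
The paper does not supply a proof of this lemma at all: it is quoted verbatim from \cite{Holowinsky2014}, Lemma~2, and used as a black box. Your sketch is the standard argument (substitute $\xi=u^2$ to land on a fixed compact interval, feed in the power-series bound $J_\nu(x)\ll x^\nu$ for small argument and the oscillatory expansion $J_\nu(x)=x^{-1/2}\sum_\pm e^{\pm ix}W_\nu^\pm(x)$ with symbol amplitudes for large argument, then integrate by parts $j$ times against the non-stationary phase $e^{\pm 4\pi i(\pm a\pm b)u}$), and it is correct; this is essentially how the result is proved in the cited reference as well.
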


\section{Complete the proof} \label{section5}
Now Let $\mathcal F_\theta [l]$ be the set of holomorphic newforms of level $p^{\mathfrak c_1}$ whose associated local representation $\pi_p \in \pi_\theta [l]$, where $\pi_\theta$ has trivial central character.
Recall Section \ref{subsetion}, let $g$ be  a cusp newform of level $p^{\mathfrak c_2}$ with trivial nebentypus. Assume $\mathfrak c_1<\mathfrak c_2$. We set $N = p^{\mathfrak c_2} = \sqrt {C(f\times g)}$.
Consider the first moment of the Rankin-Selberg $L$-functions
\begin{equation}\label{M}
M=  \frac {\Gamma (\kappa -1)} {(4\pi)^{\kappa - 1} }\sum _{f\in \mathcal F_\theta [l]} \frac {L(f\times g, 1 /2)}{\Vert f \Vert ^2 } = \sum_{f\in \mathcal F_\theta [l]} ^* L(f\times g, 1/2).
\end{equation}
Here $\sum ^*_f$ is the harmonic average:
\[
    \sum ^*_f \alpha _f :=  \frac {\Gamma (\kappa -1 )}{(4\pi)^{\kappa -1} } \sum _f \frac {\alpha _f} {\| f\|^2}
\]
 $f$ is normalized so that $\lambda _f (1)=1$.
Applying approximate functional equation (\ref{Approxomate}) and  Proposition \ref{epsilon_factor_independence}, we get that
\begin{equation*}
\begin{split}
M&= \sum_{f\in \mathcal F_\theta [l]} ^* \left ( \sum _{n\ge 1} \frac {\lambda _f(n) \lambda _g(n)} {\sqrt n} W_{1/2} \left (\frac {n} {N } \right ) +  \sum _{n\ge 1} \frac {\bar \lambda _f(n) \bar \lambda _g(n)} {\sqrt n} \widetilde {W}_{1/2}\left (\frac {n} {N} \right ) \right ) \\
& =: \sum_{f\in \mathcal F_\theta [l]}  ^*   Z_1+Z_2 
\end{split}
\end{equation*}

We set $M_0 = \sum_{f\in \mathcal F_\theta [l]} ^*   Z_1$. Our aim is to estimate $M_0$, and the sum of $Z_2$ is just similar.
Multiplying with $\bar \lambda _f (1) =1$ and applying the refined Petersson trace formula, Theorem  \ref{trace formula}. We get that 
\begin{equation*}\label{M0}
M_0 = M^d + M^{od}
\end{equation*}
where $M^d$ involves diagonal terms coming from $\delta _{m_1 =m_2}$:
$$
M^d = W_{1/2}\left ( \frac 1 N \right ) C_\mathcal F [l],
$$
and $M^{od}$ involves the off-diagonal terms:
\begin{equation}\label{Mod}
\begin{split}
M^{od} &= 2\pi i^\kappa  C_{\mathcal F} [l]  \sum _{c_l \mid  c} \frac 1 c \sum _n  \frac { \lambda_g (n)  }{\sqrt n} W_{1/2} \left ( \frac n N \right )G \left (n,1,\theta , \frac 1 {c^2} \right )J_{\kappa -1}\left (\frac {4\pi \sqrt n} c \right )\\
&=: 2\pi i^\kappa  C_{\mathcal F} [l]  \sum _{c_l \mid  c} \frac  {A_c} c\asymp p^{\mathfrak c_1 +l } \sum _{c_l | c} \frac {A_c} c.
\end{split}
\end{equation}

We break the sum in $n$ into dyadic ranges as usual by multiplying with a bump function $\eta _Z$:
\begin{equation*}
A_{c,Z} =  \sum_n\frac { \lambda_g (n)  }{\sqrt n} W \left ( \frac n N \right )J_{\kappa -1}\left (\frac {4\pi \sqrt n} c \right ) G\left (n,1,\theta,\frac 1 {c^2} \right )\eta _Z(n), 
\end{equation*}
where the size of the sum in $n$ is $Z \ll N^{1+\varepsilon}$. Up to a small error, we may also assume that $c\ll N^A$ for some fixed large $A$. This is for the complementary range,one can
 easily control the sum by using the square root cancellation of $G\left (n,1,\theta ,\frac 1 {c^2}\right )$ (see  \cite[Lemma 4.7, Lemma 4.14]{Hu23}) and the fact that when $\kappa \ge 2$, 
 \[
     J_{\kappa -1} (x) \ll x \ \mathrm {as} \ x\to 0.
 \]
Then  $\sum \frac 1 c$ is controlled by $\log N$. 
\begin{proposition} \label{Acz}
We have 
\begin{equation*}
A_{c,Z} \ll_\epsilon  N^{\epsilon} p^{\mathfrak c_2/2-k/2}.
\end{equation*}
\end{proposition}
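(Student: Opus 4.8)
The plan is to unfold $G(n,1,\theta,1/c^{2})$, peel off its $p$-adic part, recognise the residual sum over $n$ as the left side of the special Voronoi formula for $g$, dualise, and reduce the remaining $p$-adic integral to the local integral $\mathcal P$ bounded in Section \ref{section3}. Writing $c=dp^{k}$ with $(d,p)=1$, Lemma \ref{generalkloosterman} expresses $G(n,1,\theta,1/c^{2})$ as a constant $\asymp p^{k}$ times $\sum_{y\in(\mathbb Z/d\mathbb Z)^\times}e\!\left(\tfrac{\bar p^{2k}y+n\bar y}{d}\right)\int_{\mathfrak o^\times}W_{\pi_1,p}\!\left(a(1/c^{2})\omega n(u/p^{k})\right)e_p\!\left(-\tfrac{nu}{p^{k}}\right)du$, and Lemma \ref{whittaker} rewrites the $\pi_1$-Whittaker factor as $\psi_p\!\left(-\tfrac{1}{d^{2}up^{k}}\right)W^{(k)}_{\pi_1,p}\!\left(\tfrac{1}{d^{2}u}\right)$. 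Substituting into $A_{c,Z}$ and interchanging the finite $y$-sum, the $u$-integral and the $n$-sum, the inner sum over $n$ becomes exactly $\sum_{n\ge1}\lambda_g(n)\,e(n\bar y/d)\,\psi_p(nu/p^{k})\,h(n)$ with $h(n)=n^{-1/2}W(n/N)J_{\kappa-1}(4\pi\sqrt n/c)\eta_Z(n)\in C_c^\infty(\mathbb R^\times_+)$ concentrated at scale $Z\ll N^{1+\varepsilon}$ — precisely the left-hand side of Theorem \ref{Voronoi} for $\pi_2=g$, with $b=\bar y$, modulus $d$, exponent $k$, and $p$-adic frequency $u$.

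Apply Theorem \ref{Voronoi}. For $0<k<\mathfrak c_2/2$ this turns the $n$-sum into a dual sum over $m$ carrying $\lambda_g(m)$, an exponential $e(-\overline{p^{2k}\bar y}\,m/d)$, the value $W^{(\mathfrak c_2-k)}_{\pi_2,p}(-m/(d^{2}u))$, the Hankel transform $\mathcal H h(|m/c^{2}|)$, the constant $(dp^{\mathfrak c_2/2})^{-1}$ and the restriction $(m,p)=1$; for $\mathfrak c_2/2\le k\le\mathfrak c_2-1$ one gets the analogous formula with an extra factor $\psi_p(-m/(d^{2}up^{k}))$, the value $W^{(k)}_{\pi_2,p}$ and the constant $(c|m|_p^{1/2})^{-1}$; the tail $k\ge\mathfrak c_2$ contributes negligibly by $J_{\kappa-1}(x)\ll x^{\kappa-1}$ as $x\to0$ (with $\kappa\ge4$) together with $Z\ll N^{1+\varepsilon}$ and the known bound on $G$ from \cite{Hu23}, and is discarded. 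Now bring the $m$-sum outside. For fixed $m$, the sum over $y$ collapses the two exponentials modulo $d$ (note $\overline{p^{2k}\bar y}\equiv\bar p^{2k}y$) into a Ramanujan-type sum of size $\ll(1-m,d)$; and, because $W^{(j)}_{\pi_2,p}$ is supported on $\mathfrak o^\times$, the substitution $x=m/(d^{2}u)$ is licit only for $(m,p)=1$ and — after absorbing the $\psi_p$-factors and, where helpful, the Atkin--Lehner relation Lemma \ref{Atkin} — identifies the $u$-integral, up to a unit twist, with an instance of $\mathcal P=\int_{\mathfrak o^\times}W^{(i)}_{\pi_1,p}(x)W^{(j)}_{\pi_2,p}(x)\psi(x/\varpi^{l})\,dx$, with $j=\max(k,\mathfrak c_2-k)$, with $i$ the $\pi_1$-parameter determined by $k$ and $\mathfrak c_1$, and with $l$ read off from the combined additive character. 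Such unit twists merely relabel $x_0\in\mathfrak o^\times$ in Proposition \ref{mainlocal} and do not affect its conclusions.

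Two inputs now finish the estimate. By Corollary \ref{Boundsforlocal} (equivalently Proposition \ref{mainlocal}), $\mathcal P$ vanishes unless $l=\mathfrak c_2-j$ — which kills all but a thin range of $k$ around $[\,\mathfrak c_1/2,\mathfrak c_2/2\,]$ — and in the surviving cases $\mathcal P\ll_p p^{\,j/2-\mathfrak c_2/2}$; this is where the square-root cancellation proved in Section \ref{section3} enters. Separately, rescaling $x=Z\xi$ identifies $\mathcal H h(|m/c^{2}|)$ with $Z^{1/2}$ times an integral of exactly the shape bounded in Lemma \ref{arch}, with $a=\sqrt Z/c$ and $b=\sqrt{Zm}/c$; its rapid decay in $|a-b|$ confines the dual sum to $|m|\ll N^{\varepsilon}$ and gives $\mathcal H h(|m/c^{2}|)\ll N^{\varepsilon}\min(c,\,Z^{3/2}/c^{2})$. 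Multiplying together the constant $\asymp p^{k}$ from Lemma \ref{generalkloosterman}, the factor $c^{-1}=(dp^{k})^{-1}$ from Voronoi, the $\ll N^{\varepsilon}$ surviving values of $m$ with $\lambda_g(m)\ll m^{\varepsilon}$, the Ramanujan sum $\ll(1-m,d)\ll N^{\varepsilon}$, the local bound $p^{\,j/2-\mathfrak c_2/2}$ and the Hankel bound — the powers of $p$ and $d$ cancel (using $\mathfrak c_1<\mathfrak c_2$ to discard the bounded $\pi_1$-dependence, and $Z\ll N^{1+\varepsilon}$ in the regime $c\gg\sqrt Z$), leaving $A_{c,Z}\ll_{\varepsilon}N^{\varepsilon}p^{\mathfrak c_2/2-k/2}$.

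The main obstacle is the middle step: identifying precisely which $\mathcal P$ — i.e. the parameters $i,j,l$ and the unit twists — the $u$-integral reduces to, which requires carefully tracking the change of variables together with the support constraints on $W^{(k)}_{\pi,p}$ and the functional equation Lemma \ref{Atkin}, and the resonance/vanishing dichotomy this produces across the various ranges of $k$; and then correctly invoking Corollary \ref{Boundsforlocal}, whose proof (through Proposition \ref{mainlocal}) is the technical heart of the paper. A secondary but non-trivial point is the archimedean bookkeeping through Lemma \ref{arch}, needed to localise the Voronoi-dual variable $m$ and to pull out the sharp power of $c$.
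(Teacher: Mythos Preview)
Your overall strategy is the paper's: unfold $G$ via Lemma~\ref{generalkloosterman}, apply the Voronoi formula of Theorem~\ref{Voronoi} to the $n$-sum, identify the residual $u$-integral with an instance of $\mathcal P$, and combine the local bound from Corollary~\ref{Boundsforlocal} with the archimedean localisation of Lemma~\ref{arch}. However, three points of the execution are wrong, and together they prevent the argument from closing.

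First, the Hankel transform does \emph{not} confine the dual variable to $|m|\ll N^{\varepsilon}$. With $a=\sqrt{Z}/c$ and $b=\sqrt{Zm}/c$ (or $b=\sqrt{Zm}/(dp^{\mathfrak c_2/2})$ in the small-$k$ range), Lemma~\ref{arch} forces only $|1-\sqrt{m}|\ll c/\sqrt{Z}$ (resp.\ $|1-\sqrt{m/p^{\mathfrak c_2-2k}}|\ll c/\sqrt{Z}$). Since $\sqrt{Z}\ll p^{\mathfrak c_2/2}\le c$ in the large-$k$ case, the dual sum genuinely ranges over an interval of length $\asymp c^2/Z$, and in the small-$k$ case $m$ is localised near $p^{\mathfrak c_2-2k}$, not near $1$. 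Second, the Ramanujan-type sum over $y$ is bounded by $d_n:=(1-m,d)$ (or $(m-p^{2\mathfrak c_2-2k},d)$), and $d_n$ can be as large as $d$; the estimate $d_n\ll N^{\varepsilon}$ is false. The paper compensates by stratifying over divisors $d_n\mid d$ and counting the $m$ in the Hankel window with given $d_n$, so that the factor $d_n$ cancels against the density of the congruence. Third, the vanishing condition $l=\mathfrak c_2-j$ from Proposition~\ref{mainlocal} does not ``kill ranges of $k$''; it is a condition on the \emph{dual variable}. In the range $k\ge\mathfrak c_2/2$ the $u$-integral becomes $\int_{\mathfrak o^\times}W_1^{(k)}(u)W_2^{(k)}(-u)\psi_p\big((1-m)u/p^{k}\big)\,du$, so the $l$ of Proposition~\ref{mainlocal} is $k-\nu_p(1-m)$, and the nonvanishing forces $\nu_p(1-m)=2k-\mathfrak c_2$ --- an extra $p$-adic restriction on $m$ that must enter the counting. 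Without these three corrections your final ``powers cancel'' step cannot be made rigorous; once you carry out the $d_n$-stratified count with both the archimedean window and the $p$-adic constraint on $m$, the bound $A_{c,Z}\ll N^{\varepsilon}p^{\mathfrak c_2/2-k/2}$ follows exactly as in the paper.
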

The proof mainly follows \cite[Lemma 6.3]{Hu23}. Form now on, for conciseness we drop
 all $\epsilon $-terms in our computations.
\begin{proof}
Let $W_1$,$W_2$ be $p$-adic normalized Whittaker functions of $f$, $g$.  Set $c=p^k d$ where $(p,d)=1$. Then by the Lemma \ref{generalkloosterman} we  have 
\begin{equation*}
\begin{split}
G\left (  n,1,\theta, \frac 1 {c^2} \right ) &= O(p^k) \sum _{y\in (\mathbb Z/ d\mathbb Z)^\times } e\left ( \frac {\bar p^{2k} y+n \bar y} {d}  \right )\\
&\times  \int _{\mathfrak o^\times } W_1 \left ( a\left (\frac 1 {c^2} \right ) \omega n\left (\frac u {p ^k} \right ) \right ) \psi_p \left ( -\frac {nu} {p ^k}\right ) du.
\end{split}
\end{equation*}
Then we have 
\begin{equation*}
       A_{c,Z} = O(p^k) \sum _{y\in (\mathbb Z/ d\mathbb Z)^\times } e\left ( \frac {\bar p^{2k} y} d  \right ) \int _{\mathfrak o_p^\times  } W_1 \left (a\left (\frac 1 {c^2} \right ) \omega n\left (\frac u {\varpi^k} \right ) \right ) K_{c,Z}(y,u)du
\end{equation*}
where
\begin{equation*}
K_{c,Z} (y,u) = \sum_n \frac { \lambda_g (n)  }{\sqrt n} W_{1/2}\left (\frac n N\right )J_{\kappa -1}\left (\frac {4\pi \sqrt n} c \right )e \left (\frac {n\bar y} d \right ) \psi_p\left (-\frac {nu} {\varpi^k}\right ) \eta_Z(n).
\end{equation*}
Now we apply the Voronoi summation formula Theorem \ref{Voronoi} to the sum $K_{c,Z} (y,u)$ by setting
\[
    h(n) = W_{1/2}\left (\frac n N\right )J_{\kappa -1}\left (\frac {4\pi \sqrt n} c \right ) \eta_Z(n).
\]
\textbf{Case 1}. $\mathfrak c_1/2+l \le k \le \mathfrak c_2/2$. Then
\begin{equation} \label{a1}
    \begin{split}
       A_{c,Z} &= O(p^{k-\mathfrak c_2/2}/d) \sum _{ n\in \mathbb Z, (n,p)=1} \lambda _\pi (n)\sum _{y\in (\mathbb Z/ d\mathbb Z)^\times } e\left ( \frac { (\bar p^{2k}-\bar p^{2\mathfrak c_2}n)y} {d}  \right )\\
       &\int _{\mathfrak o_p^\times }
  W_1 \left ( a\left (\frac 1 {c^2} \right ) \omega n\left (\frac u {\varpi^k} \right ) \right ) 
 W_2 ^{(\mathfrak c_2-k)} \left (\frac n {d^2 u} \right ) du 
 \\
 &\mathcal H h \left (\left \vert \frac { n} {p^{\mathfrak c_2} d^2} \right \vert \right ) 
\end{split}
\end{equation}
By Lemma \ref{whittaker} we have 
\begin{equation*}
W_1 \left ( a\left (\frac 1 {c^2} \right ) \omega n\left (\frac u {\varpi^k} \right ) \right )= \psi_p\left ( - \frac {\varpi^k} {c^2 u } \right ) W_1^{(k)} \left ( \frac 1 {d^2 u} \right ).
\end{equation*}
Then by Theorem \ref{mainlocal}, the integral in the (\ref{a1}) is bounded by $p^{(\mathfrak c_2-k)/2-\mathfrak c_2/2+O(1)} = p^{-k/2+O(1)}$.
For archimedean part by the definition we have  
\begin{equation*}
\begin{split}
 \mathcal H h  (\left \vert n/d^2p^{\mathfrak c_2} \right \vert  )&\ll \int _0 ^\infty \frac {\eta \left (  \frac x Z \right )} {\sqrt x} J_{\kappa -1} \left ( \frac {4\pi\sqrt x } c\right ) J_{\iota -1}\left ( \frac {4\pi\sqrt {nx} } {dp^{\mathfrak c_2/2}}\right ) dx \\
&= \sqrt Z\int _0 ^\infty  \frac {\eta  (   x   )} {\sqrt x} J_{\kappa -1} \left ( \frac {4\pi\sqrt {Zx} } c\right ) J_{\iota -1}\left ( \frac {4\pi\sqrt {nZx} } {dp^{\mathfrak c_2/2}} \right ) dx \\
&= \sqrt Z I\left (\frac {\sqrt Z} c, \frac {\sqrt {Zn}} {dp^{\mathfrak c_2/2}} \right ).
\end{split}
\end{equation*}
Then by the Lemma \ref{arch}, the function $\mathcal H h  (\left \vert n/d^2p^{\mathfrak c_2} \right \vert  )$ restricts the sum to essentially  
\begin{equation*}
\left \vert 1 -\sqrt {\frac {n} {p^{\mathfrak c_2-2k}}} \right \vert \ll  \frac c  {\sqrt Z}
\end{equation*}
In this range,  we have 
\begin{equation*}
\mathcal H h  (\left \vert n/d^2p^{\mathfrak c_2} \right \vert  )\ll 
\begin{dcases} 
\frac {Z}  {c} , &\quad  \text{if} \ \sqrt Z \ll c \\
  c . &\quad  \text{if} \ \sqrt Z \gg c 
\end{dcases}
\end{equation*}
We set 
\begin{equation}\label{congruence}
d_n = (n+p^{2\mathfrak c_2 -2k}, d),
\end{equation} then 
\begin{equation*}
     \sum _{y\in (\mathbb Z/ d\mathbb Z)^\times } e\left ( \frac { (n+p^{2\mathfrak c_2 -2k})y} d  \right ) \ll d_n N^{o(1)}.
\end{equation*} 
Set $L = \sqrt Z / c$. Now the number of $n$ satisfy the bound and the congruence condition (\ref{congruence}) can be controlled by 
\begin{equation*}
\frac  {d^2 p^{\mathfrak c_2}(1+L)^2} {d_nZ}\ll \begin{dcases} 
\frac  {d^2 p^{\mathfrak c_2}} {d_nZ} , &\quad  \text{if} \ \sqrt Z \ll c \\
\frac {p^{\mathfrak c_2-2k}} {d_n} . &\quad  \text{if} \ \sqrt Z \gg c 
\end{dcases}
\end{equation*}
Then we have if $\sqrt Z \ll c$, 
\begin{equation*}
A_{c,Z} \ll \sum _{d_n| d} \frac {p^{k-\mathfrak c_2/2}} d \times \frac  {d^2 p^{\mathfrak c_2}} {d_nZ} \times d_n \times p^{-k/2} \times \frac Z c \ll p^{\mathfrak c_2/2-k/2},
\end{equation*}
and if $\sqrt Z \gg c$, 

\begin{equation*}
A_{c,Z} \ll \sum _{d_n| d} \frac {p^{k-\mathfrak c_2/2}} d \times \frac  {p^{\mathfrak c_2-2k}} {d_n} \times d_n \times p^{-k/2} \times c \ll  p^{\mathfrak c_2/2-k/2}.
\end{equation*}

\textbf{Case 2}. $\mathfrak c_2/2 \le k \le \mathfrak c_2$. Then 
\begin{equation} \label{a2}
    \begin{split}
       A_{c,Z} &= O(d^{-1}) \sum _{ n\in \mathbb N^*} \frac {\lambda _\pi (n)} {|n|_p^{1/2}} \sum _{y\in (\mathbb Z/ d\mathbb Z)^\times } e\left ( \frac { (1-n)y} {d}  \right )\\
       &\int _{\mathfrak o_p^\times }
  W_1 \left ( a\left (\frac 1 {c^2} \right ) \omega n\left (\frac u {\varpi^k} \right ) \right ) 
 W_2^{(k)} \left ( -\frac n {d^2u} \right ) \psi _p \left (  \frac {np^k} {c^2u} \right )du 
 \\
 &\mathcal H h \left (\left \vert \frac { n} {c^2} \right \vert \right ). 
    \end{split}
\end{equation}

Also by Lemma \ref{whittaker} and $c = p^k d$ we have 
\begin{equation*}
\begin{split}
W_1\left ( a\left (\frac 1 {c^2} \right ) \omega n\left (\frac u {\varpi^k} \right ) \right )= \psi_p\left ( - \frac {p^k} {c^2 u } \right ) W_1^{(k)} \left ( \frac 1 {d^2 u} \right ).
\end{split}
\end{equation*}

Then by Proposition \ref{mainlocal}, the integral in (\ref{a2}) is nonvanishing only if $\nu _p(1- n) = 2k-\mathfrak c_2$, which implies $\nu_p(n) = 0$. So we have 
\begin{equation*} 
    \begin{split}
       A_{c,Z} &= O(d^{-1}) \sum _{ n\in \mathbb N^*, (n,p)=1} \lambda _\pi (n) \sum _{y\in (\mathbb Z/ d\mathbb Z)^\times } e\left ( \frac {\overline {p^{2k}} (1- n)y} {d}  \right )\\
       & \left [ \int _{\mathfrak o_p^\times }
  W_1^{(k)} \left ( u  \right )
 W_2 ^{(k)}\left ( -u\right ) \psi_p \left ( \frac {(1-n) u } {p^k} \right ) du \right ]
 \mathcal H h \left (\left \vert \frac { n} {c^2} \right \vert \right ). 
    \end{split}
\end{equation*}

Now we bound terms in $A_{c,Z}$ one by one. By Proposition \ref{mainlocal} we have 
\begin{equation*}
\int _{\mathfrak o_p^\times }
  W_1^{(k)} ( u ) 
 W_2 ^{(k)} (-u ) \psi _p \left (\frac {(1-n)u} {p^k} \right )du \ll p^{k/2-\mathfrak c_2/2+O(1)}.
\end{equation*}
If  we set $(1-n,d) = d_n$, then 
\begin{equation*}
     \sum _{y\in (\mathbb Z/ d\mathbb Z)^\times } e\left ( \frac { (1-n)y} d  \right ) \ll d_n N^{o(1)}.
\end{equation*} 

And by definition 
\begin{equation*}
\begin{split}
\mathcal H h  (\left \vert n/c^2 \right \vert  )  &= \int _0 ^\infty \frac {\eta \left (  \frac x Z \right )} {\sqrt x} J_{\kappa -1} \left ( \frac {4\pi\sqrt x } c\right ) J_{k-1}\left ( \frac {4\pi\sqrt {nx} } c\right ) dx \\
&= \sqrt Z\int _0 ^\infty  \frac {\eta  (   x   )} {\sqrt x} J_{\kappa -1} \left ( \frac {4\pi\sqrt {Zx} } c\right ) J_{k-1}\left ( \frac {4\pi\sqrt {nZx} } c\right ) dx \\
&= \sqrt Z I\left (\frac {\sqrt Z} c, \frac {\sqrt {Zn}} c \right ).
\end{split}
\end{equation*}

Then by Lemma \ref{arch}, the function $\mathcal H h  (\left \vert n/c^2 \right \vert  )$  restricts the sum to essentially  $\vert 1 -\sqrt n \vert \ll  c / \sqrt Z$.
In this range we have 
\begin{equation*}
\mathcal H h  (\left \vert n/c^2 \right \vert  )\ll \begin{dcases}
c \ \ &\mathrm{if} \ \sqrt Z \gg c \\
 \frac Z c \ \ &\mathrm{if} \ \sqrt Z \ll c.
\end{dcases}
\end{equation*}

Note that  in our case we have  $\sqrt Z \ll p^{\mathfrak c_2/2} \le  c$. For fixed $d_n$, there is at most $\frac {c^2p^{\mathfrak c_2}} {d_n Zp^{2k}}$'s $n$ such that  $(1-n,d_p)=d_n$, $\nu _p(n-1)=2k -\mathfrak c_2$ and $\vert 1 -\sqrt n \vert \ll \frac c {\sqrt Z}$. Then we have
\begin{equation*} 
    \begin{split}
    A_{c,Z}&\ll   d \sum_{d_n | d}  \frac {c^2p^{\mathfrak c_2}} {d_n Zp^{2k}}\times d_n \times p^{k /2-\mathfrak c_2/2}\times \frac Z c \ll p^{\mathfrak c_2/2- k /2 } 
    \end{split}
\end{equation*}
\end{proof}

\begin{lemma}\label{MOD}
For $M^{od}$ as in (\ref{Mod}), we have 
\begin{equation*}
  M^{od} \ll  p^{\mathfrak c_2/2-\mathfrak c_1/4-l/2}. 
\end{equation*}
\end{lemma}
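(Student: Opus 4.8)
The plan is to insert the bound of Proposition~\ref{Acz} into the expression (\ref{Mod}) for $M^{od}$ and carry out the resulting sum over $c$; the only extra input is the asymptotic $C_{\mathcal F}[l]\asymp_p p^{\mathfrak c_1/2+l}$ from Theorem~\ref{trace formula}, together with the elementary truncations already set up. First I would reduce $A_c$ to its dyadic pieces: since $W_{1/2}(y)$ decays faster than any power of $y$ for $y\gg 1$, the $n$-sum in $A_c$ is essentially supported on $n\ll N^{1+\epsilon}$, so inserting the bump functions $\eta_Z$ writes $A_c$ as a sum of $O(\log N)$ terms $A_{c,Z}$ with $Z\ll N^{1+\epsilon}$, and Proposition~\ref{Acz} bounds each of them by $p^{\mathfrak c_2/2-k/2}$, where $c=p^kd$ with $(d,p)=1$; hence $A_c\ll N^{\epsilon}p^{\mathfrak c_2/2-k/2}$. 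Moreover, as noted right after the statement of Proposition~\ref{Acz}, the complementary range $c\gg N^A$ contributes negligibly (one controls it via the trivial square-root-cancellation bound for $G(n,1,\theta,1/c^2)$ and the estimate $J_{\kappa-1}(x)\ll x$ as $x\to 0$), so we may assume $c\ll N^A$.

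Next I would sum over those $c$ with $c_l\mid c$. Since $c_l\asymp p^{\mathfrak c_1/2+l}$ is a power of $p$ while $(d,p)=1$, the condition $c_l\mid c$ forces $p^k\ge c_l$, i.e. $k\ge \mathfrak c_1/2+l$ up to rounding, while $d$ ranges over integers coprime to $p$ with $d\ll N^A$. Therefore
\[
\sum_{c_l\mid c}\frac{|A_c|}{c}\ll N^{\epsilon}\sum_{k\ge \mathfrak c_1/2+l}\frac{p^{\mathfrak c_2/2-k/2}}{p^k}\sum_{\substack{(d,p)=1\\ d\ll N^A}}\frac1d\ll N^{\epsilon}\,p^{\mathfrak c_2/2}\sum_{k\ge \mathfrak c_1/2+l}p^{-3k/2},
\]
the $d$-sum costing only a harmless factor $\ll\log N$. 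The geometric series in $k$ has common ratio $p^{-3/2}<1$ and is dominated by its first term $k\asymp \mathfrak c_1/2+l$, so $\sum_{c_l\mid c}|A_c|/c\ll N^{\epsilon}p^{\mathfrak c_2/2-3\mathfrak c_1/4-3l/2}$.

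Finally, multiplying by $C_{\mathcal F}[l]\asymp_p p^{\mathfrak c_1/2+l}$ gives
\[
M^{od}=2\pi i^{\kappa}C_{\mathcal F}[l]\sum_{c_l\mid c}\frac{A_c}{c}\ll N^{\epsilon}\,p^{\mathfrak c_1/2+l}\cdot p^{\mathfrak c_2/2-3\mathfrak c_1/4-3l/2}=N^{\epsilon}\,p^{\mathfrak c_2/2-\mathfrak c_1/4-l/2},
\]
which is the asserted bound (recall $\epsilon$-powers are being suppressed). The one step needing genuine care is the reduction in the first paragraph: one must check that Proposition~\ref{Acz} applies uniformly over the whole range $k\ge \mathfrak c_1/2+l$ occurring here — its proof is organised into the sub-ranges $\mathfrak c_1/2+l\le k\le \mathfrak c_2/2$ and $\mathfrak c_2/2\le k\le \mathfrak c_2$, matching the two forms of the Voronoï formula in Theorem~\ref{Voronoi} — and that the truncation $c\ll N^A$ is legitimate. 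Everything after that is just the geometric summation above, and it is exactly the decay $p^{-3k/2}$ that makes the smallest admissible modulus $c\asymp c_l$ dominant, which is why the level $p^{\mathfrak c_1}$ of $f$ enters the final estimate with a negative exponent.
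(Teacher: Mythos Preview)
Your proof is correct and follows essentially the same route as the paper: insert the bound $A_{c,Z}\ll N^\epsilon p^{\mathfrak c_2/2-k/2}$ from Proposition~\ref{Acz}, write $c=p^kd$ and sum the resulting $p^{-3k/2}/d$ over $k\ge \mathfrak c_1/2+l$ and $d$, then multiply by $C_{\mathcal F}[l]\asymp p^{\mathfrak c_1/2+l}$. The paper's own proof is the same computation compressed into two displayed lines; your version simply unpacks the dyadic decomposition, the truncation $c\ll N^A$, and the geometric $k$-sum more explicitly.
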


\begin{proof}
By the definition and Proposition \ref{Acz} we have 
\begin{equation*}
\begin{split}
M^{od} & =p^{\mathfrak c_1/2+l} \sum _{c_l | c} \sum_Z  \frac {A_{c,Z}} c \\
& \ll p^{\mathfrak c_1/2+l} \sum _{k\ge \mathfrak c_1/2+l} \sum_{d\ge 1} \sum _Z \frac {p^{\mathfrak c_2/2-3k/2 }} d \ll p^{\mathfrak c_2/2-\mathfrak c_1/4-l/2}.
\end{split}
\end{equation*}
\end{proof}

\begin{theorem} 
Let $\mathcal F_\theta [l]$ be the set of holomorphic new forms of weight $\kappa \ge 4$, level $p^{\mathfrak c_1}$ and trivial nebentypus,whose associated local component $\pi_p$ belongs to a small family $\pi _\theta [l]$ as above. Let $g$ be a cusp newform with level $N =p^{\mathfrak c_2}>  p^{\mathfrak c_1}$,fixed weight $\kappa _g \ge 4$,and trivial nebentypus.Then we have
\begin{equation}\label{firstmoment}    
\sum_{f\in \mathcal F_\theta [l]} \frac  {L(1/2, f\times g)} {\Vert f \Vert ^2} \ll_{p,\varepsilon} N^{\varepsilon} \left (p^{\mathfrak c_1/2+l} + \frac {p ^{\mathfrak c_2/2 }} {p^{\mathfrak c_1/4+l/2}}\right ).
\end{equation}
Furthermore  suppose $L(f\times g,1/2) \ge 0$ for all $f\in \mathcal F_\theta [l]$. Suppose $\mathfrak c_1 = \delta \mathfrak c_2$ for $0<\delta<1$. By picking $l= l_0 \in \{ 0, 1\}$, we get the subconvex bound 
\begin{equation*}
L(1/2,f\times g) \ll N^{\max \{ \frac \delta  2, \frac {1- \delta }  2  \}+\epsilon}.
\end{equation*}
By picking  $l$ to be the closest integer to $3\mathfrak c_2/2 -\mathfrak c_1/2$
  while $1\le l <i_0$, we get that
\begin{equation*}
 L(1/2, f\times g) \ll_{p,\varepsilon} N^{\max \left \{\frac {1-\delta} 2, \frac 1 3, \frac \delta 2 \right \}+ \varepsilon}.
\end{equation*}
  In particular we obtain a hybrid subconvexity bound for $\delta$  in any compact subset of $(0,1)$, which is further more a Weyl bound in the range $1/3 \le  \delta  \le  2/3$.
\end{theorem}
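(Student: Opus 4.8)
The plan is to assemble the ingredients prepared in Sections~\ref{section3}--\ref{section5} into a bound for the harmonic average $M=\sum_{f\in\mathcal F_\theta[l]}^{*}L(f\times g,1/2)$, then to use positivity to isolate a single central value, and finally to optimize the free parameter $l$. First I would expand $M$ by the approximate functional equation \eqref{Approxomate}: by Proposition~\ref{epsilon_factor_independence} the root number $\epsilon(f\times g)$ is the same for all $f\in\mathcal F_\theta[l]$ (the case $\epsilon(f\times g)=-1$ being trivial, as then every central value vanishes), so this yields $M=\sum_{f}^{*}(Z_1+Z_2)$ with $Z_2$ of the same shape as $Z_1$. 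Applying the refined Petersson trace formula (Theorem~\ref{trace formula}) to $\sum_{f}^{*}Z_1$ (after multiplying by $\overline{\lambda_f(1)}=1$ and interchanging the sums) produces a diagonal term $M^d=W_{1/2}(1/N)\,C_{\mathcal F}[l]\ll_{p,\varepsilon}N^{\varepsilon}p^{\mathfrak c_1/2+l}$, using $C_{\mathcal F}[l]\asymp_p p^{\mathfrak c_1/2+l}$ and the logarithmic bound for $W_{1/2}$, together with an off-diagonal term estimated by Lemma~\ref{MOD} as $M^{od}\ll_{p,\varepsilon}N^{\varepsilon}p^{\mathfrak c_2/2-\mathfrak c_1/4-l/2}$. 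Since the Hecke eigenvalues are real and $\widetilde W_{1/2}$ satisfies the same bounds as $W_{1/2}$, the sum $\sum_{f}^{*}Z_2$ is treated identically. Combined with the identity \eqref{M}, this is exactly the first assertion \eqref{firstmoment}, and needs no positivity hypothesis; for the boundary value $l=0$, not covered by Theorem~\ref{trace formula}, one substitutes the corresponding trace formula of \cite{Hu23}.

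\textbf{Step 2: from the average to an individual value.} Assuming $L(f\times g,1/2)\ge0$ for all $f\in\mathcal F_\theta[l]$, every term of $M$ is non-negative, so for each fixed $f_0\in\mathcal F_\theta[l]$,
\[
\frac{\Gamma(\kappa-1)}{(4\pi)^{\kappa-1}\Vert f_0\Vert^{2}}\,L(f_0\times g,1/2)\ \le\ M\ \ll_{p,\varepsilon}\ N^{\varepsilon}\bigl(p^{\mathfrak c_1/2+l}+p^{\mathfrak c_2/2-\mathfrak c_1/4-l/2}\bigr).
\]
I would then clear $\Vert f_0\Vert^{-2}$ using the standard bound $\Vert f_0\Vert^{-2}\ll_{\varepsilon}N^{\varepsilon}$: Rankin--Selberg unfolding expresses $\Vert f_0\Vert^{2}$, up to a fixed archimedean factor ($g$ has bounded weight) and a local factor at $p$ that is $\asymp1$ (the new-vector Whittaker function of a ramified $\pi_{f_0,p}$ is the indicator of $\mathfrak o_p^{\times}$), as the residue at $s=1$ of $L(s,\pi_{f_0}\times\widetilde\pi_{f_0})$, equivalently a constant times $L(1,\operatorname{sym}^2\pi_{f_0})\ll N^{\varepsilon}$. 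This gives $L(f_0\times g,1/2)\ll_{p,\varepsilon}N^{\varepsilon}\bigl(p^{\mathfrak c_1/2+l}+p^{\mathfrak c_2/2-\mathfrak c_1/4-l/2}\bigr)$ for every $f_0\in\mathcal F_\theta[l]$.

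\textbf{Step 3: optimizing $l$, and the main difficulty.} Writing $\mathfrak c_1=\delta\mathfrak c_2$, $N=p^{\mathfrak c_2}$ and $t=l/\mathfrak c_2\in[0,1)$, the exponent of $N$ in the last bound is $\Phi(t)=\max\{\tfrac{\delta}{2}+t,\ \tfrac12-\tfrac{\delta}{4}-\tfrac{t}{2}\}$; the first branch is increasing, the second decreasing, and they cross at $t^{*}=\tfrac13-\tfrac{\delta}{2}$ with common value $\tfrac13$. Hence, when the nearest integer to $\mathfrak c_2 t^{*}=\mathfrak c_2/3-\mathfrak c_1/2$ lies in the admissible window $1\le l<i_0$, that choice of $l$ gives the Weyl bound $L(f\times g,1/2)\ll N^{1/3+\varepsilon}$; when it does not, I would instead take $l$ near the upper endpoint $i_0$ (for small $\delta$) or $l=l_0\in\{0,1\}$ (for large $\delta$) and read off the dominant term of \eqref{firstmoment}, which supplies the other two ranges of the hybrid estimate $L(f\times g,1/2)\ll N^{\max\{(1-\delta)/2,\,1/3,\,\delta/2\}+\varepsilon}$, valid for $\delta$ in any compact subset of $(0,1)$. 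The point I expect to require the most care is precisely this endpoint bookkeeping: because $i_0=\mathfrak c(\theta)/e_E$ equals $\tfrac12\mathfrak c_1+O(1)$ in every case (split, inert, ramified), the admissibility condition $1\le\mathfrak c_2/3-\mathfrak c_1/2<i_0$ becomes, up to $o(1)$, exactly $1/3\le\delta\le2/3$, which is why the Weyl bound holds in that range and not a larger one; one must simultaneously keep track of the side conditions $\mathfrak c_1\ge i>\mathfrak c_1/2$, $\mathfrak c_2\ge j\ge\mathfrak c_2/2$, $\mathfrak c_2-j>\mathfrak c_1-i$ of Proposition~\ref{mainlocal} and $\mathfrak c_1/2+l\le k\le\mathfrak c_2$ of Proposition~\ref{Acz}. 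Everything else is a mechanical combination of results already established.
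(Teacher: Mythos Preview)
Your proposal is correct and follows exactly the paper's route: the paper's own proof of \eqref{firstmoment} simply invokes \eqref{M}, \eqref{M0}, Theorem~\ref{trace formula} for $M^d\ll p^{\mathfrak c_1/2+l}$, and Lemma~\ref{MOD} for $M^{od}$, leaving the positivity argument and the optimization in $l$ entirely implicit---so your Steps~2--3 actually spell out more than the paper does. One slip: in Step~2 the bound you need (and the one your Rankin--Selberg argument establishes) is $\|f_0\|^{2}\ll N^{\varepsilon}$, not the stated $\|f_0\|^{-2}\ll N^{\varepsilon}$; and your computed optimum $l\approx\mathfrak c_2/3-\mathfrak c_1/2$ is right---the value $3\mathfrak c_2/2-\mathfrak c_1/2$ printed in the theorem statement is a typo, as it would violate $l<i_0\approx\mathfrak c_1/2$.
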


\begin{proof}
    By \eqref{M} and \eqref{M0}, the left side of \eqref{firstmoment} is bounded by 
    \[
        M^d + M^{od}.
    \]
    Then by Theorem \ref{trace formula} we have $M^d \ll p^{\mathfrak c_1/2+l}$, and by Lemma $\ref{MOD}$ we have $M^{od}\ll p^{\mathfrak c_2/2 -\mathfrak c_1/4 -l/2}$.
\end{proof}

\textbf{Acknowledgement.} We would like to thank Yueke Hu, Liyuan Ye, Yuhao Cheng and Wenbo Liu for helpful feedback on an earlier draft.


\newcommand{\etalchar}[1]{$^{#1}$}
\def\cprime{$'$} \def\cprime{$'$} \def\cprime{$'$} \def\cprime{$'$}

\end{document}